\newtheorem*{thm*}{Theorem}
\newtheorem{thm}{Theorem}[section]
\newtheorem{lemma}[thm]{Lemma}
\newtheorem{prop}[thm]{Proposition}
\newtheorem{corollary}[thm]{Corollary}
\theoremstyle{definition}
\newtheorem{mydef}{Definition}
\theoremstyle{remark}
\newtheorem{remark}{Remark}
\newtheorem{example}{Example}
\numberwithin{equation}{section}
\newenvironment{ieee}[1]{\begin{IEEEeqnarray}{#1}}{\end{IEEEeqnarray}\ignorespacesafterend}
\newenvironment{ieee*}[1]{\begin{IEEEeqnarray*}{#1}}{\end{IEEEeqnarray*}\ignorespacesafterend}
\newcommand{\ie}{\emph{i.e.}}
\newcommand{\hsmallspace}{\hspace{0.5cm}}
\newcommand{\R}{\mathbb{R}}
\newcommand{\N}{\mathbb{N}}
\newcommand{\prob}{\mathcal{P}}
\newcommand{\Leb}{\mathcal{L}}
\newcommand{\energy}[1]{\mathcal{E}^{#1}}
\newcommand{\eps}{\varepsilon}
\newcommand{\wconv}{\rightharpoonup}
\newcommand{\Rdn}{(\R^d)^N}
\newcommand{\de}{\mkern2.5mu\mathrm{d}}
\newcommand{\dX}[1][]{%
	\ifthenelse{\isempty{#1}}{\de X}{\de \hat{X}_{#1}}%
}
\newcommand{\dY}[1][]{%
	\ifthenelse{\isempty{#1}}{\de Y}{\de \hat{Y}_{#1}}%
}
\newcommand{\proj}[2]{#1{\mkern-1.5mu\downharpoonright}_{\mkern-.5mu#2}}
\newcommand{\eqdef}{\vcentcolon=}
\newcommand{\gra}[1]{\left \{ #1 \right \}}
\newcommand{\st}{\left | \right.}
\newcommand{\abs}[1]{\left| #1 \right|}
\newcommand{\norm}[2][]{\left\lVert {#2} \right\rVert_{#1}}
\title{Smoothing operators in multi-marginal Optimal Transport}
\author{Ugo Bindini}
\address{Scuola Normale Superiore \\ Piazza dei Cavalieri, 7 \\ 56126 Pisa - ITALY}
\email{ugo.bindini@sns.it}
\date{\today}
\begin{document}

\maketitle

\begin{abstract}
	Given $N$ absolutely continuous probabilities $\rho_1, \dotsc, \rho_N$ over $\R^d$ which have Sobolev regularity, and given a transport plan $P$ with marginals $\rho_1, \dotsc, \rho_N$, we provide a universal technique to approximate $P$ with Sobolev regular transport plans with the same marginals. Moreover, we prove a sharp control of the energy and some continuity properties of the approximating family.
\end{abstract}

\section{Introduction}

We consider a multi-marginal Optimal Transport problem on the Euclidean space: given $N$ Borel probability measures $\rho_1, \dotsc, \rho_N \in \prob(\R^d)$, and given a cost function $c : \Rdn \to \R$, the goal is to find
\begin{equation} \label{otproblem}
	\min_{P} \int c(x_1, \dotsc, x_N) \de P(x_1, \dotsc, x_N)
\end{equation}
under the constraint
\[  P \in \Pi(\rho_1, \dotsc, \rho_N) \eqdef \gra{P \in \prob \left( \Rdn \right) \st \pi_\#^j P = \rho_j \ \forall j = 1, \dotsc, N}. \]
Here $\pi^j \colon \Rdn \to \R^d$ denotes the projection onto the $j$-th coordinate, \ie, $\pi^j(x_1, \dotsc, x_N) = x_j$.

When $N = 2$, the classical Kantorovich formulation of the Optimal Transport problem is recovered; however, many characteristics of the multi-marginal problem are different from the classical one. For a good survey on both cases see for instance \cite{ambrosio2013user, pass2015multi}. 

In this work we want to investigate the properties of the space $\Pi(\rho_1, \dotsc, \rho_N)$ when the measures $\rho_1, \dotsc, \rho_N$ share some regularity --- in particular, we are interested in the case when the marginals have a Sobolev-type regularity, as clarified in the following

\begin{mydef} \label{p-definition}
	If $p \geq 1$, we say that a probability measure $\mu \in \prob(\R^m)$ is $W^{1,p}$\emph{-regular} if $\mu$ is absolutely continuous with respect to the Lebesgue measure $\Leb^m$, and
	\[ \left( \frac{\de \mu}{\de \Leb^m} \right)^{\frac{1}{p}} \in W^{1,p}(\R^m). \]	
	In other words, $\mu$ is $W^{1,p}$-regular if there exists $f \in W^{1,p}(\R^m)$, $f \geq 0$, such that
	\[ \frac{\de \mu}{\de \Leb^m} = f^p. \]
\end{mydef}

We will denote by $\prob^{1,p}(\R^m)$ the space of $W^{1,p}$-regular probability measures. This definition arises naturally in the setting of Density Functional Theory as a generalization of the one given by Lieb in \cite{lieb2002density} for $p = 2$. In what follows, when we say that a measure is regular we will mean that it is $W^{1,p}$-regular for some fixed $p$. After giving some basic notation and results in \autoref{section-notation}, we study in \autoref{section-regular-measures} the properties of regular measures, stressing in particular the relation between a measure and its marginals.

Even when the marginals $\rho_1, \dotsc, \rho_N$ are regular, the optimal plan in \eqref{otproblem} may be singular; it is well known, for instance, that in the case $N=2$, under suitable hypotheses, the optimal plan is concentrated on a graph. On the other hand, for many applications, and in particular when dealing with $\Gamma$-convergence, it may be useful to construct regular transport plans which are ``close'' to a given optimal one (see for instance \cite{bindini2017optimal, cotar2018smoothing, lewin2018semi}). With this in mind, in Sections \ref{section-definition}--\ref{section-mu-continuity}, we address the following

\medskip
\textbf{Problem: } \label{main-problem} \textit{Given $\rho_1, \dotsc, \rho_N\in \prob^{1,p}(\R^d)$, and given $\mu \in \Pi(\rho_1, \dotsc, \rho_N)$, find a family $\left( \mu^\eps \right)_{\eps > 0}$ such that:
\begin{enumerate}[(i)]
	\item $\mu^\eps \in \Pi(\rho_1, \dotsc, \rho_N);$
	\item $\mu^\eps \in \prob^{1,p} \left( \Rdn \right);$
	\item $\mu^\eps \to \mu$ as $\eps \to 0$ (for a suitable notion of convergence).
\end{enumerate}}
\medskip

In other words, we search for $W^{1,p}$-regular multi-marginal transport plans with marginals $\rho_1, \dotsc, \rho_N$ which approximate a (non regular) transport plan $\mu$. Since in general $\mu$ could be no more regular than a measure, the natural topology for (iii) is the tight convergence of probability measures, \ie, weak convergence in duality with $C_b\left( \Rdn \right)$ (continuous and bounded functions).

Notice that, if $\mu$ is optimal in \eqref{otproblem}, and the cost $c$ is upper semi-continuous and bounded from above, combining (iii) and the Portmanteau's Theorem we get
\[ \lim_{\eps \to 0} \int c(X) \de \mu^\eps(X) = \int c(X) \de \mu(X), \]
whence we may say that $\mu^\eps$ is ``almost'' optimal for small $\eps$.

This problem has already been treated in C. Cotar, G. Friesecke and C. Kl\"{u}ppelberg in \cite{cotar2013density, cotar2018smoothing} and solved with a different construction for $p=2$. Our technique was introduced in collaboration with L. De Pascale in \cite{bindini2016gamma} and later used in \cite{bindini2017optimal} for studying the semiclassical limit in Density Functional Theory. Recently, our construction was extended to mixed states by M. Lewin in \cite{lewin2018semi}. In the present work we give a systematic presentation of the results for general $p\geq 1$, and we are also able to obtain sharp energy estimates (\autoref{energy-estimate} and \autoref{p-energy-estimate}) and a strong $W^{1,p}$-continuity property (\autoref{mu-continuity}). The latter, in particular, turns out to be a very useful tool in order to study the properties of the mapping between a transport plan and its marginals. We will use it, in a forthcoming work in preparation with L. De Pascale, to show that the map which sends a symmetric wave-function to its marginal is open, partially answering to a conjecture posed by Lieb in \cite[Question 2]{lieb2002density}.

Finally, we want to point out that the definition of the smoothing operator (Section \ref{section-definition}), which we give in the case of Sobolev spaces due to physical interest, works in the same way for other classes of absolutely continuous measures, \emph{e.g.}, measures with $C^{k,\alpha}$ density, with analogous regularity and continuity results.


\section{Notation and preliminary results} \label{section-notation}

We will denote by $\R^+$ the open interval $(0, +\infty)$. We recall the following elementary inequalities, valid for any $a,b \geq 0$:
\begin{ieee}{rll}
	\abs{a^p-b^p} &{} \leq \abs{a-b} \abs{a+b}^{p-1} \hsmallspace &1 \leq p < \infty \label{p-inequality} \\
	\abs{a^\gamma - b^\gamma} &{} \leq \abs{a-b}^\gamma \hsmallspace &0 < \gamma \leq 1. \label{gamma-inequality}
\end{ieee}

Given $\mu \in \prob\left( \Rdn \right)$, we denote its marginals by $\proj{\mu}{j} \eqdef \pi^j_\# \mu$, for $j=1, \dotsc, N$. If $f \colon \Rdn \to \R$, and $1 \leq j \leq N$, we denote by
\[ \int f(X) \dX[j] \eqdef \int f(x_1,\dotsc, x_N) \de x_1 \dotsm \de \hat{x}_j \dotsm \de x_N\]
the integral of $f$ with respect to all the variables except $x_j$. This is a function of the variable $x_j$.

When $f \in W^{1,p}(\R^m)$, we will denote by
\begin{equation} \label{abs-gradient-convention}
	\abs{\nabla f} \eqdef \left( \sum_{j = 1}^m \abs{\partial_{x_j} f}^p \right)^\frac{1}{p},
\end{equation}
\ie, when computing the norm of a gradient we take on $\R^m$ the $p$-th norm.

We say that a sequence of probability measures $\gra{\mu_k} \subseteq \prob(\R^m)$ weakly converges to $\mu \in \prob(\R^m)$, denoted $\mu_k \wconv \mu$, if for every $\phi \in C_b(\R^m)$ 
\[ \lim_{k \to \infty} \int \phi \de \mu_k = \int \phi \de \mu. \]

A family of measures $\mathcal{M} \subseteq \prob(\R^m)$ is said to be \emph{tight} if for every $\delta > 0$ there exists $K \subseteq \R^m$ compact such that $\mu(K) \geq 1 - \delta$ for every $\mu \in \mathcal{M}$.

Finally we recall the following classical results.

\begin{thm}[Prokhorov's theorem] \label{prokhorov} A family $\mathcal{M} \subseteq \prob(\R^m)$ is tight if and only if for every sequence $\gra{\mu_k} \subseteq \mathcal{M}$ there exists a subsequence $\gra{\mu_{n_k}}$ and $\mu \in \prob(\R^m)$ with $\mu_{n_k} \wconv \mu$.
\end{thm}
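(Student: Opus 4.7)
The plan is to prove the two implications separately: the easy direction (sequential weak compactness implies tightness) by contradiction, and the harder direction (tightness implies sequential weak compactness) by a diagonal extraction combined with the Banach--Alaoglu theorem.

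For the hard direction, given a tight family $\mathcal{M}$ and a sequence $\gra{\mu_k} \subseteq \mathcal{M}$, I would first fix nested compact sets $K_n \nearrow \R^m$ with $\nu(K_n) \geq 1 - 1/n$ for every $\nu \in \mathcal{M}$. Since each $K_n$ is a compact metric space, $C(K_n)$ is separable, and by the Riesz representation theorem the restrictions $\mu_k\big|_{K_n}$ sit inside the closed unit ball of $C(K_n)^*$, which is weak-$*$ sequentially compact. A standard diagonal extraction then produces a single subsequence $\mu_{n_k}$ whose restriction to each $K_n$ converges weakly-$*$ to a positive Radon measure $\nu_n$ on $K_n$; these satisfy $\nu_{n+1}\big|_{K_n} = \nu_n$ by consistency, so they glue into a unique Borel measure $\mu$ on $\R^m$. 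Testing against the constant $1 \in C(K_n)$ gives $\nu_n(K_n) \geq 1 - 1/n$, whence $\mu(\R^m) = \lim_n \nu_n(K_n) = 1$. To check $\mu_{n_k} \wconv \mu$, for any $\phi \in C_b(\R^m)$ one splits $\int \phi \de \mu_{n_k}$ into the integral over $K_n$ (converging to $\int_{K_n} \phi \de \mu$ by weak-$*$ convergence on $C(K_n)$) and the tail over $K_n^c$ (bounded in absolute value by $\norm[\infty]{\phi}/n$ uniformly in $k$); letting $n \to \infty$ yields convergence to $\int \phi \de \mu$.

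For the easy direction, suppose $\mathcal{M}$ is not tight: there exists $\delta > 0$ such that, setting $K_n = \overline{B(0,n)}$, one finds $\mu_n \in \mathcal{M}$ with $\mu_n(K_n) < 1 - \delta$. By hypothesis extract a subsequence $\mu_{n_k} \wconv \mu$ with $\mu \in \prob(\R^m)$; pick $n$ large enough that $\mu(K_n) > 1 - \delta/2$ and a cutoff $\phi \in C_b(\R^m)$ with compact support, $0 \leq \phi \leq 1$ and $\phi \equiv 1$ on $K_n$. Weak convergence gives $\int \phi \de \mu_{n_k} > 1 - \delta$ for $k$ large, but as soon as $\supp \phi \subseteq K_{n_k}$ this contradicts $\mu_{n_k}(K_{n_k}) < 1 - \delta$.

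The main obstacle lies in the hard direction: ensuring that the diagonal limit $\mu$ is genuinely a probability measure on $\R^m$ and that weak convergence holds against every bounded continuous function, not merely those supported in some $K_n$. Both facts depend crucially on the uniform tail estimate $\mu_k(K_n^c) \leq 1/n$ coming from tightness of $\mathcal{M}$.
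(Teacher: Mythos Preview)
The paper does not prove Prokhorov's theorem; it is merely recalled as a classical result in the preliminaries. So there is no ``paper's proof'' to compare against, and your proposal should be judged on its own merits.

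Your easy direction is fine. In the hard direction, however, the gluing step ``these satisfy $\nu_{n+1}\big|_{K_n} = \nu_n$ by consistency'' is false as stated. Take $m=1$, $K_1=[-1,1]$, $K_2=[-2,2]$, and $\mu_k=\delta_{1+1/k}$. Then $\mu_k\big|_{K_1}=0$ for every $k$, so $\nu_1=0$; but $\mu_k\big|_{K_2}=\delta_{1+1/k}\to\delta_1$ weak-$*$ on $C(K_2)$, so $\nu_2=\delta_1$ and $\nu_2\big|_{K_1}=\delta_1\neq\nu_1$. Mass can drift onto $\partial K_n$ from the outside and be counted by $\nu_{n+1}$ but not by $\nu_n$.

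The fix is minor but must be made explicit. Arrange $K_n\subset\operatorname{int}(K_{n+1})$; then for any $\phi\in C_c(\R^m)$ with $\supp\phi\subset\operatorname{int}(K_n)$ one has $\int_{K_n}\phi\,\de\mu_k=\int_{K_{n+1}}\phi\,\de\mu_k$, whence $\nu_n$ and $\nu_{n+1}$ agree on $\operatorname{int}(K_n)$. Since $\R^m=\bigcup_n\operatorname{int}(K_n)$, this is enough to glue. Alternatively, and more cleanly, bypass the restrictions altogether: $C_0(\R^m)$ is separable, so by Banach--Alaoglu the sequence $(\mu_k)$ has a subsequence converging weak-$*$ in $C_0(\R^m)^*$ to some non-negative Radon measure $\mu$ with $\mu(\R^m)\leq 1$; tightness then forces $\mu(\R^m)=1$, and the upgrade from $C_0$ to $C_b$ proceeds exactly as you wrote.
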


\begin{thm}[Generalized Lebesgue's dominated convergence theorem] \label{generalized-lebesgue} Let $\gra{f_n}_{n\in \N}$ and $\gra{g_n}_{n\in \N}$ be Lebesgue measurable functions, with $g_n \geq 0$. Suppose that:
	\begin{enumerate}[(i)]
		\item $\abs{f_n(x)} \leq g_n(x)$ for all $n \in \N$, for almost every $x$;
		\item $\gra{f_n}$ converges pointwise almost everywhere to $f$ and $\gra{g_n}$ converges pointwise almost everywhere to $g$;
		\item
		\[
		\lim_{n \to \infty} \int g_n = \int g. 
		\]
	\end{enumerate}
	
	Then $f$ is Lebesgue integrable on $E$ and
	\[
	\lim_{n \to \infty} \int f_n = \int f. 
	\]
\end{thm}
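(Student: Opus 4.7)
The plan is to reduce the statement to two applications of the classical Fatou lemma. First I would observe that hypothesis (i) passes to the pointwise almost everywhere limit via (ii), giving $\abs{f} \leq g$ almost everywhere. Combined with (iii), which is meaningful only when the common limit $\int g$ is finite, this ensures that $g \in L^1$ and hence that each $f_n$ and the limit $f$ are Lebesgue integrable. In particular, all the integrals appearing below are finite real numbers, which will be important when we manipulate them.

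The key step is then to consider the two sequences $g_n + f_n$ and $g_n - f_n$, which by (i) are both non-negative almost everywhere. Applying Fatou's lemma to $g_n + f_n$ yields
\[ \int g + \int f \;=\; \int (g+f) \;\leq\; \liminf_{n \to \infty} \int (g_n + f_n) \;=\; \int g + \liminf_{n \to \infty} \int f_n, \]
where the last equality uses (iii) together with linearity of the integral. Subtracting the finite quantity $\int g$ from both sides gives $\int f \leq \liminf_n \int f_n$. Running the same argument on $g_n - f_n$ produces
\[ \int g - \int f \;\leq\; \liminf_{n \to \infty} \int (g_n - f_n) \;=\; \int g - \limsup_{n \to \infty} \int f_n, \]
whence $\limsup_n \int f_n \leq \int f$. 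Combining the two inequalities yields the claim.

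The only delicate point — and what forces the hypothesis (iii) in its quantitative form — is the subtraction of $\int g$ on both sides, which is illegitimate if $\int g = +\infty$. In the classical dominated convergence theorem one has a single dominating function and so $\int g_n = \int g$ trivially for all $n$; here instead one needs (iii) to replace $\liminf_n \int g_n$ by $\int g$ and, symmetrically, $-\limsup_n (-\int g_n)$ by $-\int g$. Once finiteness is in place, the rest is a straightforward rearrangement, and no further measure-theoretic subtlety arises.
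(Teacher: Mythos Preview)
Your argument is correct and is in fact the standard proof of this generalized dominated convergence theorem via two applications of Fatou's lemma to the non-negative sequences $g_n \pm f_n$. Note, however, that the paper does not supply its own proof of this statement: it is simply recalled in the preliminaries as a classical result, so there is no alternative approach to compare against.
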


\subsection{Roots and powers of non-negative Sobolev functions}

The following Propositions will be useful later in order to have an expression for the weak derivatives of $p$-th powers and $p$-th roots of non-negative Sobolev functions.

\begin{prop} \label{p-root}
	Let $p > 1$. If $u \in W^{1,p}(\R^m)$, $u \geq 0$, then $u^p \in W^{1,1}(\R^m)$, and $\nabla u^p = p u^{p-1} \nabla u$.
	
	Viceversa, let $u \in W^{1,1}(\R^m)$, $u \geq 0$, such that
	\begin{equation} \label{finite-integral}
		\int u^{1-p} \abs{\nabla u}^p < \infty.
	\end{equation}
	
	Then $u^\frac{1}{p} \in W^{1,p}(\R^m)$, and $\nabla u^\frac{1}{p} = \frac{1}{p} u^\frac{1-p}{p} \nabla u$. 
\end{prop}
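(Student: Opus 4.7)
My plan is to prove both directions by combining the Marcus--Mizel chain rule for Sobolev spaces (if $F : \R \to \R$ is Lipschitz with $F(0) = 0$ and $u \in W^{1,q}(\R^m)$, then $F(u) \in W^{1,q}$ with $\nabla F(u) = F'(u) \nabla u$ a.e.) with a suitable approximation adapted to each case.

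For the forward direction, the map $t \mapsto t^p$ is only locally Lipschitz on $[0,\infty)$ for $p > 1$, so I would truncate: define $F_k(t) = (\min(t_+, k))^p$, which is globally Lipschitz with $F_k(0) = 0$. Marcus--Mizel yields $F_k(u) \in W^{1,p}$ with $\nabla F_k(u) = F_k'(u)\, \nabla u$ a.e. As $k \to \infty$, $F_k(u) \nearrow u^p$ pointwise, so monotone convergence together with $u \in L^p$ gives $F_k(u) \to u^p$ in $L^1$. Pointwise, $F_k'(u) \nabla u \to p u^{p-1} \nabla u$ a.e., dominated by $p u^{p-1} |\nabla u|$, which lies in $L^1$ by H\"older with exponents $p/(p-1)$ and $p$ from $u \in L^p$ and $\nabla u \in L^p$. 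Dominated convergence identifies the distributional limit and gives $u^p \in W^{1,1}$ with the claimed formula.

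For the reverse direction, I would regularize from below: set $v_\eps = (u+\eps)^{1/p} - \eps^{1/p}$. The function $t \mapsto (t+\eps)^{1/p} - \eps^{1/p}$ is $C^1$ on $[0,\infty)$ with bounded derivative $\frac{1}{p}(t+\eps)^{(1-p)/p} \leq \frac{1}{p}\eps^{(1-p)/p}$ and vanishes at $0$, so Marcus--Mizel applies: $v_\eps \in W^{1,1}$ with $\nabla v_\eps = \frac{1}{p}(u+\eps)^{(1-p)/p}\, \nabla u$ a.e. As $\eps \to 0$, $v_\eps \to u^{1/p}$ pointwise and $\nabla v_\eps \to \frac{1}{p} u^{(1-p)/p} \nabla u$ pointwise on $\{u > 0\}$. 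The inequality $(u+\eps)^{(1-p)/p} \leq u^{(1-p)/p}$ (since the exponent is non-positive) gives the uniform bound $|\nabla v_\eps| \leq \frac{1}{p} u^{(1-p)/p} |\nabla u|$ on $\{u > 0\}$, whose $L^p$-norm is finite by hypothesis \eqref{finite-integral}; likewise $u^{1/p} \in L^p(\R^m)$ from $\int (u^{1/p})^p = \int u < \infty$. Dominated convergence yields $L^1_{loc}$ convergence of both $v_\eps$ and $\nabla v_\eps$, identifying the weak gradient, and the integrability bounds give $u^{1/p} \in W^{1,p}(\R^m)$ with the stated formula.

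The main subtlety is the behavior on $\{u = 0\}$ in the reverse direction, where the formal derivative $u^{(1-p)/p}\, \nabla u$ contains a singular factor. This is reconciled by the classical fact that $\nabla u = 0$ almost everywhere on $\{u = 0\}$ for $u \in W^{1,1}$: the potentially problematic set $\{u = 0\} \cap \{\nabla u \neq 0\}$ has Lebesgue measure zero, so the candidate formula is well-defined a.e., and the regularization $v_\eps$ smooths over the singularity while furnishing the uniform dominant needed to pass to the limit.
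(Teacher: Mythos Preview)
Your proposal is correct. Both directions go through as you describe: the truncation $F_k$ and the shift $G_\eps(t)=(t+\eps)^{1/p}-\eps^{1/p}$ are Lipschitz with $F_k(0)=G_\eps(0)=0$, so the chain rule applies, and the dominations you give (via H\"older for the forward part, via the hypothesis \eqref{finite-integral} and the subadditivity $(u+\eps)^{1/p}\le u^{1/p}+\eps^{1/p}$ for the reverse part) justify the passage to the limit.

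The paper's argument differs mainly in what is approximated. For the forward implication the paper approximates $u$ by smooth $u_n$ in $W^{1,p}$ and shows directly, via the elementary inequalities $|a^p-b^p|\le|a-b|\,|a+b|^{p-1}$ and $|a^\gamma-b^\gamma|\le|a-b|^\gamma$, that $u_n^{p-1}\nabla u_n\to u^{p-1}\nabla u$ in $L^1$; you instead approximate the outer function $t\mapsto t^p$ by Lipschitz truncations and invoke the Marcus--Mizel chain rule as a black box. For the reverse implication the two routes coincide at the outer level---both work with $(u+\eps)^{1/p}$ and let $\eps\to 0$ under the same domination---but the paper again justifies the fixed-$\eps$ identity by approximating $u$ with smooth nonnegative $u_n$ and passing to the limit with the generalized dominated convergence theorem, whereas you short-circuit this step with Marcus--Mizel. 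Your version is a bit cleaner and packages the smooth-approximation work inside a standard lemma; the paper's version is more self-contained, relying only on density of smooth functions and elementary inequalities.
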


\begin{proof}
	If $u \in W^{1,p}(\R^m)$ clearly $u^p \in L^1(\R^m)$, and viceversa if $u \in W^{1,1}(\R^m)$ then $u^\frac{1}{p} \in L^p(\R^m)$. Let $u_n \in C^\infty(\R^m) \cap W^{1,p}(\R^m)$ such that $u_n \to u$ in $W^{1,p}(\R^m)$. Then by the H\"{o}lder inequality with exponents $p$ and $\frac{p}{p-1}$
	\begin{ieee*}{rCl}
		\int \abs{u_n^{p-1} \nabla u_n - u^{p-1} \nabla u} & \leq & \int u_n^{p-1} \abs{\nabla u_n - \nabla u} + \int \abs{\nabla u} \abs{u_n^{p-1} - u^{p-1}} \\
		& = & \norm[p]{u_n}^{p-1} \norm[p]{\nabla u - \nabla u_n} + \norm[p]{\nabla u} \norm[p]{\abs{u_n^{p-1} - u^{p-1}}^\frac{1}{p-1}}^{p-1}.
	\end{ieee*}
	
	If $p \geq 2$ we use \eqref{p-inequality} and the H\"{o}lder inequality to get
	\[ \norm[p]{\abs{u_n^{p-1} - u^{p-1}}^\frac{1}{p-1}}^{p-1} \leq \norm[p]{u_n - u} \norm[p]{u_n + u}^{p-2}; \]
	if $1 < p < 2$, let $\gamma = p-1 \in (0,1)$ and use \eqref{gamma-inequality} to get
	\[ \norm[p]{\abs{u_n^{p-1} - u^{p-1}}^\frac{1}{p-1}}^{p-1} \leq \norm[p]{u_n - u}^{p-1}. \]
	
	This completes the proof of the first part. Suppose on the contrary that $u \in W^{1,1}(\R^m)$, $u \geq 0$, and that the condition \eqref{finite-integral} holds. Fix $\phi \in C^\infty_c(\R^m)$ and $\eps > 0$. We want to prove that
	\begin{equation} \label{part-eps}
		\int (u + \eps)^\frac{1}{p} \nabla \phi = -\frac{1}{p} \int \phi (u + \eps)^\frac{1-p}{p} \nabla u.
	\end{equation}

	To this end, let $u_n \to u$ in $W^{1,1}(\R^m)$, where $u_n \in C^\infty$, $u_n \geq 0$; up to a subsequence we may suppose also $u_n \to u$ and $\nabla u_n \to \nabla u$ pointwise almost everywhere. Putting $u_n$ in place of $u$ in \eqref{part-eps} we have pointwise convergence of both the integrands, and we conclude via \autoref{generalized-lebesgue} using the dominations
	\[ \abs{\phi (u_n + \eps)^\frac{1-p}{p} \nabla u_n} \leq \eps^\frac{1-p}{p} \abs{\phi} \abs{\nabla u_n}, \quad \abs{\phi (u + \eps)^\frac{1-p}{p} \nabla u} \leq \eps^\frac{1-p}{p} \abs{\phi} \abs{\nabla u}. \]
	
	Finally, letting $\eps \to 0$ in \eqref{part-eps}, we have once again pointwise convergence of the integrands, and we conclude by the classical Lebesgue's dominated covergence Theorem thanks to the hypothesis and the domination
	\[ \abs{\phi (u + \eps)^\frac{1-p}{p} \nabla u}^p \leq \abs{\phi}^p u^{1-p} \abs{\nabla u}^p. \qedhere \]
\end{proof}

Note that the condition \eqref{finite-integral} in \autoref{p-root} is necessary, as the following example shows.

\begin{example} \label{counterexample} In dimension $m=1$, fix $p>1$ and consider the $W^{1,1}$ function
	\[ f(x) = \begin{cases} \sin (x)^{p-1} & 0 \leq x \leq \pi \\ 0 & \text{otherwise,} \end{cases} \]
	whose weak derivative is $f'(x) = \chi_{[0,\pi]} \sin(x)^{p-2} \cos (x)$. The point is that $f^\frac{1}{p}$ does not belong to $W^{1,p}(\R)$, since the weak derivative of $f^\frac{1}{p}$ should be $g_p(x) = \frac{p-1}{p} \chi_{[0,\pi]} \sin(x)^{-\frac{1}{p}} \cos(x)$, but
	\[ \int_0^\pi \abs{g_p(x)^p} \de x = \frac{(p-1)^p}{p^p} \int_0^\pi \frac{\abs{\cos(x)}^p}{\sin(x)} \de x \]
	diverges at both 0 and $\pi$
\end{example}

\begin{prop} \label{p-continuous-maps}
	If $u_n \to u$ in $W^{1,p}(\R^m)$, $u_n, u \geq 0$, then $u_n^p \to u^p$ in $W^{1,1}(\R^m)$.
	
	Viceversa, let $u_n \to u$ in $W^{1,1}(\R^d)$, $u_n, u \geq 0$. Let $h_n,h \in L^1(\R^m)$ such that $u_n^{1-p} \abs{\nabla u_n}^p \leq h_n$, $u^{1-p} \abs{\nabla u}^p \leq h$, and
	\begin{equation} \label{convergence-hypothesis}
		\lim_{n \to \infty} \int h_n = \int h.
	\end{equation}
	Suppose also that for every subsequence $\gra{h_{n_k}}$ there exists a further subsequence converging to $h$ pointwise a.e. Then $u_n^\frac{1}{p} \to u^\frac{1}{p}$ in $W^{1,p}(\R^m)$.
\end{prop}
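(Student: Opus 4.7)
My plan is to imitate the computation performed in the first half of the proof of Proposition~\ref{p-root}, now applied along the sequence. The $L^1$ convergence $u_n^p\to u^p$ follows from~\eqref{p-inequality} combined with H\"older's inequality with exponents $p$ and $p/(p-1)$:
\[\int|u_n^p-u^p|\le\int|u_n-u|(u_n+u)^{p-1}\le\|u_n-u\|_p\|u_n+u\|_p^{p-1}\to 0.\]
For the weak gradients, I will invoke the identity $\nabla u_n^p=p\,u_n^{p-1}\nabla u_n$ supplied by Proposition~\ref{p-root}, split
\[u_n^{p-1}\nabla u_n-u^{p-1}\nabla u=u_n^{p-1}(\nabla u_n-\nabla u)+(u_n^{p-1}-u^{p-1})\nabla u,\]
and bound the two summands precisely as in the proof of Proposition~\ref{p-root} --- the first by $\|u_n\|_p^{p-1}\|\nabla u_n-\nabla u\|_p$ via H\"older, and the second by distinguishing the cases $p\ge 2$ (using~\eqref{p-inequality}) and $1<p<2$ (using~\eqref{gamma-inequality}). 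No new estimates are required.

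\textbf{Converse implication.} The hypothesis on subsequential pointwise convergence of $h_n$ calls for the standard subsequence principle: it suffices to show that from every subsequence of $(u_n)$ one can extract a further subsequence along which $u_n^{1/p}\to u^{1/p}$ in $W^{1,p}$. Given an arbitrary subsequence, I first refine it so that $u_n\to u$ and $\nabla u_n\to\nabla u$ pointwise a.e.\ (possible because convergence holds in $W^{1,1}$) and simultaneously $h_n\to h$ pointwise a.e.\ (by assumption). Proposition~\ref{p-root} then supplies the explicit formulas $\nabla u^{1/p}=p^{-1}u^{(1-p)/p}\nabla u$ and its analogue for $u_n$, turning the hypothesis into the uniform domination $|\nabla u_n^{1/p}|^p\le p^{-p}h_n$. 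I then intend to apply Theorem~\ref{generalized-lebesgue} twice on this sub-subsequence: once to the integrand $|u_n^{1/p}-u^{1/p}|^p$, dominated by $2^{p-1}(u_n+u)$ whose integrals converge to $2^p\int u$, yielding $L^p$ convergence of the functions; and once to $|\nabla u_n^{1/p}-\nabla u^{1/p}|^p$, dominated by $2^{p-1}p^{-p}(h_n+h)$ whose integrals converge by~\eqref{convergence-hypothesis}. The subsequence principle then promotes the conclusion to the whole sequence.

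\textbf{Main obstacle.} The delicate ingredient in the second application of Theorem~\ref{generalized-lebesgue} is verifying the pointwise a.e.\ convergence of $|\nabla u_n^{1/p}-\nabla u^{1/p}|^p$ to zero. On the set $\{u>0\}$ this is routine, since $u_n\to u>0$ together with the explicit formulas for $\nabla u_n^{1/p}$ and $\nabla u^{1/p}$ permits direct passage to the limit. On $\{u=0\}$, Stampacchia's chain rule gives $\nabla u=0$ a.e., hence $\nabla u^{1/p}=0$ there; the corresponding vanishing of $\nabla u_n^{1/p}$ on this set has to be teased out of the combined information $p^p|\nabla u_n^{1/p}|^p\le h_n$, the pointwise convergence $h_n\to h$, and the mass equality~\eqref{convergence-hypothesis}, which jointly (via a Scheff\'e-type argument upgrading $h_n\to h$ to $L^1$ convergence) preclude concentration of $|\nabla u_n^{1/p}|^p$ on $\{u=0\}$ in the limit.
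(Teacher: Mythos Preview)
Your forward implication and the overall subsequence scaffolding for the converse match the paper's argument exactly. You are in fact more scrupulous than the paper on one point: the paper simply asserts that $\abs{u_{n_k}^{(1-p)/p}\nabla u_{n_k}-u^{(1-p)/p}\nabla u}^p\to 0$ pointwise a.e.\ without singling out the set $\{u=0\}$, whereas you correctly isolate this as the crux.

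Unfortunately the Scheff\'e-type patch you sketch cannot close the gap, because under the stated hypotheses the conclusion can actually fail on $\{u=0\}$. Take $m=1$, $p>1$, $u\equiv 0$, and set $u_n^{1/p}(x)=\tfrac{1}{2n}\bigl(1-\cos(2\pi nx)\bigr)\chi_{[0,1]}(x)$. One checks that $u_n\to 0$ in $W^{1,1}(\R)$ (indeed $\norm[1]{u_n}=O(n^{-p})$ and $\norm[1]{u_n'}=O(n^{1-p})$), while $(u_n^{1/p})'(x)=\pi\sin(2\pi nx)\chi_{[0,1]}(x)$ has a fixed positive $L^p$ norm, so $u_n^{1/p}\not\to 0=u^{1/p}$ in $W^{1,p}$. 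Yet with the \emph{constant} choice $h_n=h=p^p\pi^p\chi_{[0,1]}$ every hypothesis is met: $u_n^{1-p}\abs{u_n'}^p=p^p\pi^p\abs{\sin(2\pi nx)}^p\le h_n$, trivially $0\le h$, $\int h_n=\int h$, and $h_n\equiv h$. The mechanism is that the hypotheses permit $h$ to strictly exceed $u^{1-p}\abs{\nabla u}^p$ on $\{u=0\}$, and this slack absorbs non-vanishing oscillations of $\nabla u_n^{1/p}$; Scheff\'e applied to $h_n$ yields $h_n\to h$ in $L^1$, but that only bounds $\int_{\{u=0\}}\abs{\nabla u_n^{1/p}}^p$ by $p^{-p}\int_{\{u=0\}}h$, which need not vanish. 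The paper's proof shares this lacuna; its downstream uses (\autoref{pi-continuous}, \autoref{H1-corollary}) survive because in those concrete situations one has $h=0$ a.e.\ on $\{u=0\}$, an extra property not recorded in the abstract statement.
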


\begin{proof} 
	If $p = 1$ there is nothing to prove, so assume $p > 1$, and take $u_n \to u$ in $W^{1,p}(\R^m)$. Using \eqref{p-inequality} and the H\"{o}lder inequality with exponents $p$ and $\frac{p}{p-1}$,
	\[ \int \abs{u_n^p - u^p} \leq \norm[p]{u_n - u} \norm[p]{u_n + u}^{p-1}. \]

	Since $u_n \to u$ in $W^{1,p}(\R^m)$ and hence in particular $u_n$ is bounded in $L^p(\R^m)$, we get that $u_n^p \to u^p$ (strongly) in $L^1(\R^m)$. 
	
	Moreover, $\nabla u_n^p = p u_n^{p-1} \nabla u_n$ and $\nabla u^p = p u^{p-1} \nabla u$ by \autoref{p-root}, hence by the H\"{o}lder inequality
	\begin{ieee*}{rCl}
		\int \abs{\nabla u_n^p - \nabla u^p} & \leq & p \int u_n^{p-1} \abs{\nabla u_n - \nabla u} + p \int \abs{\nabla u} \abs{u_n^{p-1} - u^{p-1}} \\
		& \leq & p \norm[p]{u_n}^{p-1} \norm[p]{\nabla u_n - \nabla u} + p \int \abs{\nabla u} \abs{u_n^{p-1} - u^{p-1}},
	\end{ieee*}
	which converges to zero as in the proof of \autoref{p-root}.

	To prove the converse, suppose by contradiction that there is a subsequence (denoted again $u_n$) such that
	\begin{equation} \label{contradiction}
		\norm[W^{1,p}]{u_n^\frac{1}{p}, u^\frac{1}{p}} \geq \delta > 0.
	\end{equation}
	
	By hypothesis, up to a further subsequence we may assume that $u_{n_k} \to u$, $\nabla u_{n_k} \to \nabla u$ and $h_{n_k} \to h$ pointwise almost everywhere. Then we have by \eqref{gamma-inequality}, with $\gamma = \frac{1}{p}$,
	\[ \int \abs{u_{n_k}^\frac{1}{p} - u^\frac{1}{p}}^p \leq \int \abs{u_{n_k} - u} = \norm[1]{u_{n_k} - u}, \]
	and
	\[ \norm[p]{\nabla u_{n_k}^\frac{1}{p} - \nabla u^\frac{1}{p}} = \frac{1}{p^p} \int \abs{u_{n_k}^\frac{1-p}{p} \nabla u_{n_k} - u^\frac{1-p}{p} \nabla u}^p. \]
	Here the integrand converges to zero pointwise, and using the domination
	\[ \abs{u_{n_k}^\frac{1-p}{p} \nabla u_{n_k} - u^\frac{1-p}{p} \nabla u}^p \leq 2^{p-1} \left( u_{n_k}^{1-p} \abs{\nabla u_{n_k}}^p + u^{1-p} \abs{\nabla u}^p \right) \leq 2^{p-1} (h_{n_k} + h) \]
	and the condition \eqref{convergence-hypothesis} we conclude thanks to \autoref{generalized-lebesgue} that $u_{n_k}^\frac{1}{p} \to u^\frac{1}{p}$ in $W^{1,p}(\R^m)$, contradicting \eqref{contradiction}.
\end{proof}


\section{Regular measures} \label{section-regular-measures}

In this Section we study the space $\prob^{1,p}(\R^m)$ of $W^{1,p}$-regular measures. By \autoref{p-root}, it is immediate to see that
\[ \mu \in \prob^{1,p}(\R^m) \implies \mu \in \prob^{1,1}(\R^m), \]
but the converse is not true in general if $p > 1$ (see \autoref{counterexample}). Thus, when $p >1$ we have a strict inclusion $\prob^{1,p}(\R^m) \subsetneq \prob^{1,1}(\R^m)$, .

The set $\prob^{1,p}(\R^m)$ has a natural structure of metric space if endowed with the distance
\[
d^{1,p}(\mu,\nu) = \norm[W^{1,p}]{ \left( \frac{\de \mu}{\de \Leb^m} \right)^{\frac{1}{p}} - \left( \frac{\de \nu}{\de \Leb^m} \right)^{\frac{1}{p}}},
\]
which can be seen as a refined version of the Hellinger distance between two absolutely continuous probability measures, where the $L^p$ norm of the $p$-th roots is replaced by the $W^{1,p}$ norm.

We aim to study the space $\left( \prob^{1,p}\left( \Rdn \right), d^{1,p} \right)$ in relation with the map which sends a $W^{1,p}$-regular probability onto its marginals, namely
\begin{ieee}{rCrCl} 
	\pi & \colon & \prob^{1,p}\left( \Rdn \right) & \longrightarrow & \prob(\R^d)^N \label{pi-definition} \\
	&& \mu & \longmapsto & \left( \proj{\mu}{1}, \dotsc, \proj{\mu}{N} \right). \nonumber
\end{ieee}

In particular we want to prove the two following facts:
\begin{itemize}
	\item if $\mu$ is $W^{1,p}$-regular, then $\proj{\mu}{j}$ is $W^{1,p}$-regular for every $j = 1, \dotsc, N$;
	\item the map $\pi \colon \prob^{1,p}\left( \Rdn \right) \longrightarrow \prob^{1,p}(\R^d)^N$ is continuous with respect to the distance $d^{1,p}$ and the relative product topology on the codomain.
\end{itemize}

These properties will be proved in \autoref{regular-marginals} and \autoref{pi-continuous} respectively. We remark that the latter was alredy proved by Brezis in \cite[Appendix]{lieb2002density} in the case $p=2$. We start by introducing some technical results about the projection map. In what follows, if $\mu$ is $W^{1,p}$-regular, with a slight abuse of notation we will denote by $\mu(X)$ its density, whose $p$-th root belongs to $W^{1,p}\left( \Rdn \right)$. For $j = 1, \dotsc, N$ let 
\begin{equation} \label{rho-definition}
	\proj{\mu}{j}(x_j) = \int \mu(X) \dX[j], \quad \nabla \proj{\mu}{j}(x_j) = \int \nabla_{x_j} \mu(X) \dX[j],
\end{equation}
where $\nabla_{x_j} \mu$ is defined according to \autoref{p-root}. It is easy to prove, approximating $\mu$ with smooth functions in $W^{1,1}\left( \Rdn \right)$, that $\nabla \proj{\mu}{j}$ is the distributional gradient of $\proj{\mu}{j}$, hence $\proj{\mu}{j} \in W^{1,1}(\R^d)$.

\begin{remark}
	Notice that $\proj{\mu}{j}$ coincides with the (density of the) push-forward measure under the projection $\pi^j \colon \Rdn \to \R^d$ on the $j$-th factor, which makes the notation is consistent.
\end{remark}

By \autoref{p-root}, in order to prove that the marginals of a $W^{1,p}$-regular measure are $W^{1,p}$-regular, it suffices to show that
\[ \int \proj{\mu}{j}(x)^{1-p} \abs{\nabla \proj{\mu}{j}(x)}^p \de x \]
is finite.

\begin{lemma} \label{p-domination} Let $\mu \in \prob^{1,p}\left( \Rdn \right)$. Then, for every $j=1, \dotsc, N$,
	\[ \proj{\mu}{j}(x_j)^{1-p} \abs{\nabla \proj{\mu}{j}(x_j)}^p \leq p^p \int \abs{\nabla_{x_j} \mu^\frac{1}{p}(X)}^p \dX[j]. \]
\end{lemma}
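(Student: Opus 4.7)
The plan is to exploit the chain rule of \autoref{p-root} to rewrite the gradient of $\mu$ in terms of the gradient of $\mu^{1/p}$, and then apply H\"older's inequality on the fiber integral.

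First, since $\mu^{1/p} \in W^{1,p}(\Rdn)$, \autoref{p-root} gives pointwise a.e.
\[
\nabla_{x_j} \mu(X) \;=\; p\, \mu(X)^{\frac{p-1}{p}}\, \nabla_{x_j} \mu^{\frac{1}{p}}(X).
\]
Substituting into the definition \eqref{rho-definition} of $\nabla \proj{\mu}{j}$ and applying the (vector) triangle inequality under the integral sign, using that $\abs{\,\cdot\,}$ is a norm by \eqref{abs-gradient-convention},
\[
\abs{\nabla \proj{\mu}{j}(x_j)} \;\leq\; p \int \mu(X)^{\frac{p-1}{p}}\,\abs{\nabla_{x_j} \mu^{\frac{1}{p}}(X)} \dX[j].
\]

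Next I would apply H\"older with conjugate exponents $\frac{p}{p-1}$ and $p$ to the product $\mu^{\frac{p-1}{p}} \cdot \abs{\nabla_{x_j} \mu^{1/p}}$, obtaining
\[
\int \mu^{\frac{p-1}{p}}\,\abs{\nabla_{x_j} \mu^{\frac{1}{p}}} \dX[j] \;\leq\; \left( \int \mu(X) \dX[j] \right)^{\frac{p-1}{p}} \left( \int \abs{\nabla_{x_j} \mu^{\frac{1}{p}}(X)}^p \dX[j] \right)^{\frac{1}{p}}.
\]
Recognizing the first factor as $\proj{\mu}{j}(x_j)^{\frac{p-1}{p}}$, raising the resulting inequality to the $p$-th power, and multiplying through by $\proj{\mu}{j}(x_j)^{1-p}$ yields the claim.

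The only minor subtlety is handling points $x_j$ where $\proj{\mu}{j}(x_j) = 0$, where the factor $\proj{\mu}{j}(x_j)^{1-p}$ is formally infinite; however at such points $\mu(X) = 0$ a.e.\ on the fiber, and by the standard property that $\nabla u = 0$ a.e.\ on $\{u = 0\}$ for non-negative Sobolev functions we have $\nabla \proj{\mu}{j}(x_j) = 0$ as well, so the inequality is understood in the equivalent multiplicative form $\abs{\nabla \proj{\mu}{j}}^p \leq p^p \proj{\mu}{j}^{p-1}\!\!\int \abs{\nabla_{x_j} \mu^{1/p}}^p \dX[j]$, which poses no issue. The main technical point is really just the correct use of the chain rule from \autoref{p-root}; once that is in place, H\"older with the split $(p-1)/p + 1/p = 1$ is exactly what produces the constant $p^p$ and the desired factor $\proj{\mu}{j}^{p-1}$.
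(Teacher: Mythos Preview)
Your proof is correct and follows essentially the same approach as the paper: apply the chain rule from \autoref{p-root}, then H\"older with exponents $\frac{p}{p-1}$ and $p$ on the fiber integral to produce the factor $\proj{\mu}{j}(x_j)^{\frac{p-1}{p}}$, and rearrange. Your additional remark about the zero set of $\proj{\mu}{j}$ is a nice clarification that the paper omits.
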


\begin{proof}
	Recalling \autoref{p-root} and using the H\"{o}lder inequality with exponents $\frac{p}{p-1}$ and $p$, we get
	\begin{ieee*}{rCl}
		\abs{\nabla \proj{\mu}{j} (x_j)} & \leq & p \int \mu(X)^\frac{p-1}{p} \abs{\nabla_{x_j} \mu^\frac{1}{p}(X)} \dX[j] \\
		& \leq & p \left( \int \mu(X) \dX[j] \right)^{\frac{p-1}{p}} \left( \int \abs{\nabla_{x_j} \mu^\frac{1}{p}(X)}^p \dX[j] \right)^\frac{1}{p} \\
		& = & p \proj{\mu}{j}(x_j)^{\frac{p-1}{p}}\left( \int \abs{\nabla_{x_j} \mu^\frac{1}{p}(X)}^p \dX[j] \right)^\frac{1}{p},
	\end{ieee*}
	which implies the thesis.
\end{proof}

As a corollary we obtain

\begin{thm} \label{regular-marginals}
	Let $\mu \in \prob^{1,p} \left( \Rdn \right)$. Then its marginals belong to $\prob^{1,p}(\R^d)$, and
	\[ \nabla (\proj{\mu}{j})^{\frac{1}{p}} = \frac{1}{p}(\proj{\mu}{j})^{\frac{1-p}{p}} \nabla \proj{\mu}{j}. \]
\end{thm}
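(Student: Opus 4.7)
The plan is to apply the converse direction of \autoref{p-root} to each marginal $\proj{\mu}{j}$. The discussion preceding the statement already establishes that $\proj{\mu}{j} \in W^{1,1}(\R^d)$ with distributional gradient given by \eqref{rho-definition}, so the only thing left to verify is the integrability condition \eqref{finite-integral}, namely
\[ \int \proj{\mu}{j}(x_j)^{1-p} \abs{\nabla \proj{\mu}{j}(x_j)}^p \de x_j < \infty. \]

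First I would apply \autoref{p-domination} pointwise in $x_j$ to bound the integrand by $p^p \int \abs{\nabla_{x_j} \mu^{1/p}(X)}^p \dX[j]$. Integrating in $x_j$ and using Fubini then gives the majorization
\[ \int \proj{\mu}{j}(x_j)^{1-p} \abs{\nabla \proj{\mu}{j}(x_j)}^p \de x_j \leq p^p \int \abs{\nabla_{x_j} \mu^\frac{1}{p}(X)}^p \de X, \]
and the right-hand side is finite because $\mu^{1/p} \in W^{1,p}(\Rdn)$ by the assumption $\mu \in \prob^{1,p}(\Rdn)$ (using the convention \eqref{abs-gradient-convention}, the partial derivative $\nabla_{x_j}\mu^{1/p}$ has $L^p$-norm bounded by that of the full gradient).

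With \eqref{finite-integral} verified for the nonnegative $W^{1,1}$ function $\proj{\mu}{j}$, the converse part of \autoref{p-root} applies and yields both $(\proj{\mu}{j})^{1/p} \in W^{1,p}(\R^d)$ — i.e.\ $\proj{\mu}{j} \in \prob^{1,p}(\R^d)$ — and the explicit formula
\[ \nabla (\proj{\mu}{j})^\frac{1}{p} = \frac{1}{p}(\proj{\mu}{j})^\frac{1-p}{p} \nabla \proj{\mu}{j}. \]
There is no real obstacle here: the entire content of the theorem has been packaged into \autoref{p-domination} and \autoref{p-root}, and the proof is essentially a one-line invocation of these two results combined with Fubini. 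The only point that deserves a brief mention in writing is why one may identify $\nabla \proj{\mu}{j}$ obtained by differentiating under the integral sign in \eqref{rho-definition} with the distributional gradient of $\proj{\mu}{j}$, which is precisely the observation made just before the lemma via approximation by smooth functions in $W^{1,1}$.
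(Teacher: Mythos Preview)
Your proposal is correct and matches the paper's own proof essentially verbatim: the paper simply says ``Apply the result of \autoref{p-root} to $\proj{\mu}{j} \in W^{1,1}(\R^d)$, using the domination given by \autoref{p-domination}.'' You have just written out in slightly more detail the Fubini step that makes the domination integrable, which is exactly what the one-line proof intends.
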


\begin{proof} Apply the result of \autoref{p-root} to $\proj{\mu}{j} \in W^{1,1}(\R^d)$, using the domination given by \autoref{p-domination}.
\end{proof}

Finally we want to prove that the map $\pi$ defined in \eqref{pi-definition} is continuous.

\begin{lemma} \label{L1-convergence-marginals}
	Let $\mu^n \to \mu$ in $\prob^{1,p}\left(\Rdn \right)$. Then $\proj{\mu^n}{j} \to \proj{\mu}{j}$ in $L^1(\R^d)$ and $\nabla \proj{\mu^n}{j} \to \nabla \proj{\mu}{j}$ in $L^1(\R^d)^d$.
\end{lemma}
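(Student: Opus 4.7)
The plan is to reduce everything to $W^{1,1}$ convergence on $\Rdn$, where marginalization is essentially given by Fubini, rather than trying to work directly with $p$-th roots on $\R^d$.

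First, I would unpack the hypothesis: $\mu^n \to \mu$ in $\prob^{1,p}(\Rdn)$ means precisely that $(\mu^n)^{1/p} \to \mu^{1/p}$ in $W^{1,p}(\Rdn)$. Applying the first half of \autoref{p-continuous-maps} with $u_n = (\mu^n)^{1/p}$ and $u = \mu^{1/p}$, we obtain
\[ \mu^n = u_n^p \longrightarrow u^p = \mu \quad \text{in } W^{1,1}(\Rdn). \]
In particular $\mu^n \to \mu$ in $L^1(\Rdn)$ and $\nabla \mu^n \to \nabla \mu$ in $L^1(\Rdn)$.

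Next, I would pass the $L^1$ convergence of the densities to the marginals by Fubini. Using the definition \eqref{rho-definition},
\[ \int_{\R^d} \abs{\proj{\mu^n}{j}(x_j) - \proj{\mu}{j}(x_j)} \de x_j \leq \int \abs{\mu^n(X) - \mu(X)} \de X \longrightarrow 0. \]

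For the gradients, the same kind of estimate works. By \eqref{rho-definition}, $\nabla \proj{\mu^n}{j}(x_j) - \nabla \proj{\mu}{j}(x_j) = \int \nabla_{x_j} (\mu^n - \mu)(X) \dX[j]$, so taking absolute values inside (in the chosen norm on $\R^d$) and then integrating in $x_j$ gives
\[ \int_{\R^d} \abs{\nabla \proj{\mu^n}{j}(x_j) - \nabla \proj{\mu}{j}(x_j)} \de x_j \leq \int \abs{\nabla_{x_j}(\mu^n - \mu)(X)} \de X \leq \int \abs{\nabla(\mu^n - \mu)(X)} \de X, \]
where the last inequality uses convention \eqref{abs-gradient-convention} (with $p=1$ here, so $|\nabla f|$ dominates each partial derivative and in particular any partial gradient). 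The right-hand side tends to zero by step one, completing the proof.

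There is no real obstacle, since the technical work is entirely absorbed into \autoref{p-continuous-maps}; the only thing to be careful about is not confusing the $W^{1,p}$ norm on $\Rdn$ (which is what we start with) with the $W^{1,1}$ norm of the density itself (which is what we actually need in order to apply Fubini component-wise). Once that reduction is made, both conclusions follow from a one-line Fubini estimate.
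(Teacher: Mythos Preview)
Your proposal is correct and follows essentially the same route as the paper: reduce to $W^{1,1}$ convergence of the densities on $\Rdn$, then apply a Fubini estimate to each marginal and its gradient. The only cosmetic difference is that you invoke \autoref{p-continuous-maps} for the reduction step, whereas the paper cites \autoref{p-root}; your reference is arguably the more precise one, since the continuity statement $u_n \to u$ in $W^{1,p} \Rightarrow u_n^p \to u^p$ in $W^{1,1}$ is exactly the first half of \autoref{p-continuous-maps}.
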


\begin{proof} Using \eqref{p-inequality} and the H\"{o}lder inequality,
	\begin{ieee*}{rCl}
		\int \abs{\proj{\mu^n}{j}(x_j) -\proj{\mu}{j}(x_j)} \de x_j & = & \int \abs{\int \mu(X) - \mu^n(X) \dX[j]} \de x_j \\
		& \leq & \int \abs{\mu^n(X) - \mu(X)} \dX \\
	\end{ieee*}
	and
	\begin{ieee*}{rCl}
		\int \abs{\nabla \proj{\mu^n}{j}(x_j) - \nabla \proj{\mu}{j}(x_j)} \de x_j & = & \int \abs{\int \nabla_{x_j} \mu^n (X) - \nabla_{x_j} \mu (X) \dX[j]} \de x_j \\
		& \leq & \int \abs{\nabla_{x_j} \mu^n (X) - \nabla_{x_j} \mu (X)} \dX.
	\end{ieee*}
	
	We conclude thanks to \autoref{p-root}.
\end{proof}

\begin{thm} \label{pi-continuous} The map $\pi$ is continuous from $\prob^{1,p}\left( \Rdn \right)$ to $\prob^{1,p}(\R^d)^N$ with the product topology.
\end{thm}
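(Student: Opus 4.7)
The plan is to apply the converse direction of \autoref{p-continuous-maps} to the marginals $u_n \eqdef \proj{\mu^n}{j}$ and $u \eqdef \proj{\mu}{j}$, for each fixed $j \in \gra{1, \dotsc, N}$. The $W^{1,1}(\R^d)$ convergence $u_n \to u$ is already supplied by \autoref{L1-convergence-marginals}, so the task reduces to exhibiting a dominating family satisfying the three hypotheses of that proposition.

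The natural candidates, dictated by the bound of \autoref{p-domination}, are
\[ h_n(x_j) \eqdef p^p \int \abs{\nabla_{x_j}(\mu^n)^{\frac{1}{p}}(X)}^p \dX[j], \qquad h(x_j) \eqdef p^p \int \abs{\nabla_{x_j}\mu^{\frac{1}{p}}(X)}^p \dX[j], \]
so that the pointwise dominations $u_n^{1-p} \abs{\nabla u_n}^p \leq h_n$ and $u^{1-p} \abs{\nabla u}^p \leq h$ hold automatically by \autoref{p-domination} applied to $\mu^n$ and $\mu$ respectively.

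For the remaining hypotheses I would in fact prove the stronger statement that $h_n \to h$ in $L^1(\R^d)$. By definition of the metric $d^{1,p}$, the assumption $\mu^n \to \mu$ in $\prob^{1,p}(\Rdn)$ means $(\mu^n)^{\frac{1}{p}} \to \mu^{\frac{1}{p}}$ in $W^{1,p}(\Rdn)$, and in particular $\nabla_{x_j}(\mu^n)^{\frac{1}{p}} \to \nabla_{x_j}\mu^{\frac{1}{p}}$ in $L^p(\Rdn)$. The inequality \eqref{p-inequality} applied to $a = \abs{\nabla_{x_j}(\mu^n)^{\frac{1}{p}}}$ and $b = \abs{\nabla_{x_j}\mu^{\frac{1}{p}}}$, together with the Hölder inequality with exponents $p$ and $\frac{p}{p-1}$ and the boundedness of $\gra{(\mu^n)^{\frac{1}{p}}}$ in $W^{1,p}(\Rdn)$, upgrades this $L^p$ convergence into the $L^1(\Rdn)$ convergence $\abs{\nabla_{x_j}(\mu^n)^{\frac{1}{p}}}^p \to \abs{\nabla_{x_j}\mu^{\frac{1}{p}}}^p$. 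Integrating out the variables $\dX[j]$ via Fubini then yields $h_n \to h$ in $L^1(\R^d)$, which immediately supplies both $\int h_n \to \int h$ and the fact that every subsequence of $\gra{h_n}$ admits a further subsequence converging pointwise a.e.\ to $h$.

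With all hypotheses in place, \autoref{p-continuous-maps} gives $(\proj{\mu^n}{j})^{\frac{1}{p}} \to (\proj{\mu}{j})^{\frac{1}{p}}$ in $W^{1,p}(\R^d)$, i.e., $d^{1,p}(\proj{\mu^n}{j}, \proj{\mu}{j}) \to 0$. Since this holds for every $j = 1, \dotsc, N$, it is precisely convergence in the product topology on $\prob^{1,p}(\R^d)^N$. The main technical obstacle is the step upgrading $L^p$-convergence of the gradients of $(\mu^n)^{\frac{1}{p}}$ to $L^1$-convergence of the dominating functions $h_n$, as \autoref{p-continuous-maps} is built to accept $L^1$-type domination hypotheses rather than $L^p$ ones.
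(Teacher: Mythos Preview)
Your proof is correct and follows the same overall strategy as the paper: apply the converse direction of \autoref{p-continuous-maps} to each marginal, with the dominating functions $h_n,h$ supplied by \autoref{p-domination}, and with $W^{1,1}$ convergence of the marginals coming from \autoref{L1-convergence-marginals}.

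The difference lies in how the subsequence hypothesis of \autoref{p-continuous-maps} is verified. The paper checks \eqref{convergence-hypothesis} directly from the convergence of norms, and then, to obtain pointwise a.e.\ convergence of a sub-subsequence of $h_{n_k}$, builds a Riesz--Fischer type dominating function $F(X) = \abs{\nabla \mu^{1/p}}^p + \sum_k \abs{\nabla(\mu^{n_k})^{1/p} - \nabla\mu^{1/p}}^p$ and applies dominated convergence in the $\dX[j]$ integral. Your route is more direct: you prove the stronger fact $h_n \to h$ in $L^1(\R^d)$ by first showing $\abs{\nabla_{x_j}(\mu^n)^{1/p}}^p \to \abs{\nabla_{x_j}\mu^{1/p}}^p$ in $L^1(\Rdn)$ via \eqref{p-inequality} and H\"older, then pushing this through the $\dX[j]$ integration. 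This single $L^1$ statement delivers both \eqref{convergence-hypothesis} and the sub-subsequence pointwise condition at once, and avoids the explicit Riesz--Fischer construction. Both arguments are short; yours is marginally more economical, while the paper's makes the mechanism behind the pointwise convergence more visible.
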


\begin{proof}
	Let $\mu^n \to \mu$ in $\prob^{1,p}\left( \Rdn \right)$, and fix $j \in \gra{1, \dotsc, N}$. In order to prove that $\proj{\mu^n}{j} \to \proj{\mu}{j}$ in $\prob^{1,p}(\R^d)$ we want to apply \autoref{p-continuous-maps}, with
	\[ h_n(x_j) = p^p \int \abs{\nabla_{x_j} (\mu^n)^\frac{1}{p}(X)}^p \dX[j], \quad h(x_j) = p^p \int \abs{\nabla_{x_j} \mu^\frac{1}{p}(X)}^p \dX[j]. \]
	
	By \autoref{p-domination} we have $(\proj{\mu^n}{j})^{1-p} \abs{\nabla (\proj{\mu^n}{j})}^p \leq h_n$ and $(\proj{\mu}{j})^{1-p} \abs{\nabla \proj{\mu}{j}}^p \leq h$. Condition \eqref{convergence-hypothesis} is ensured by
	\begin{ieee*}{rCl}
		\lim_{n \to \infty} \int h_n(x_j) \de x_j & = & p^p \lim_{n \to \infty} \int \abs{\nabla_{x_j} (\mu^n)^\frac{1}{p}(X)}^p \dX \\
		& = & p^p \lim_{n \to \infty} \norm[p]{\nabla_{x_j} (\mu^n)^\frac{1}{p}}^p \\
		& = & p^p \norm[p]{\nabla_{x_j} \mu^\frac{1}{p}}^p \\
		& = & \int h(x_j) \de x_j.
	\end{ieee*}
	
	We now follow a construction similar to the one of the Riesz-Fischer theorem, and already used for the analogous result by Brezis in \cite[Appendix]{lieb2002density}. Recall that, by \autoref{p-continuous-maps}, $\mu^n \to \mu$ in $W^{1,1}\left(\Rdn \right)$. For every subsequence (denoted again $h_n$), extract a further subsequence $(h_{n_k})_k$ such that:
	\begin{enumerate}[(i)]
		\item $\nabla (\mu^{n_k})^\frac{1}{p} \to \nabla \mu^\frac{1}{p}$ pointwise a.e.;
		\item $\norm[L^p]{\nabla (\mu^{n_k})^\frac{1}{p} - \nabla \mu^\frac{1}{p}}^p \leq {2^{-k}}$.
	\end{enumerate}
	
	Let
	\[ F(X) = \abs{\nabla \mu^\frac{1}{p}(X)}^p + \sum_{k = 1}^\infty \abs{\nabla (\mu^{n_k})^\frac{1}{p}(X) - \nabla \mu^\frac{1}{p}(X)}^p. \]
	
	Since $F \in L^1\left( \Rdn \right)$ and clearly
  \[ \abs{\nabla (\mu^{n_k})^\frac{1}{p}(X)}^p \leq 2^{p-1}F(X), \quad \abs{\nabla \mu^\frac{1}{p}(X)}^p \leq F(X) \]
  we have that $h_{n_k} \to h$ pointwise a.e. by dominated convergence. Finally $\proj{\mu^n}{j} \to \proj{\mu}{j}$ in $W^{1,1}(\R^d)$ by \autoref{L1-convergence-marginals}, and we may conclude by \autoref{p-continuous-maps}.
\end{proof}

\section{Energy of regular measures}

If $\mu \in \prob^{1,p}(\R^m)$, it will be useful to deal with the Sobolev norm of $\mu^\frac{1}{p}$. However, since $\mu$ is a probability,
\[ \norm[W^{1,p}]{\mu^\frac{1}{p}}^p = \int \mu(x) \de x + \int \abs{\nabla \mu^\frac{1}{p}(x)}^p \de x = 1 + \int \abs{\nabla \mu^\frac{1}{p}(x)}^p \de x, \]
so all the information is contained in the second summand. Therefore we give the following

\begin{mydef} \label{energy-definition}
	If $\mu \in \prob^{1,p}$, the \emph{$W^{1,p}$-energy} of $\mu$ is defined as
	\begin{equation} 
		\energy{1,p}(\mu) = \int \abs{\nabla \mu^\frac{1}{p} (x)}^p \de x.
	\end{equation}
\end{mydef}

In the special case $p=2$, this quantity may be seen as the kinetic energy $\int \abs{\nabla \psi}^2$ of a system described by a wave-function $\psi \in W^{1,2}(\R^m)$, which justifies the name. It is well-known (see for instance \cite{lieb2002density}) that the kinetic energy of a wave-function is bounded from below by (a constant times) the kinetic energy of its marginals. This is also true in our setting, as stated in the following

\begin{lemma} \label{energy-lemma} Let $\mu \in \prob^{1,p}\left( \Rdn \right)$. Then
	\[ \energy{1,p}(\mu) \geq \sum_{j = 1}^N \energy{1,p}(\proj{\mu}{j}). \]
	
	Moreover, if $\rho_1, \dotsc, \rho_N \in \prob^{1,p}(\R^d)$,
	\[ \inf \gra{\energy{1,p}(\mu) \st \mu \in \prob^{1,p}(\R^m) \cap \Pi(\rho_1, \dotsc, \rho_N)} = \sum_{j = 1}^N \energy{1,p}(\rho_j).  \]
\end{lemma}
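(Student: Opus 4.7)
The plan is to get the lower bound by combining the gradient convention \eqref{abs-gradient-convention} with \autoref{p-domination}, and to get the matching upper bound by producing an explicit minimizer, namely the product measure $\rho_1 \otimes \dotsm \otimes \rho_N$. No new analytic ingredient is needed: everything we need has been assembled in the previous lemmas.

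\textbf{Lower bound.} Since $\mu^{1/p} \in W^{1,p}((\R^d)^N)$ and the gradient norm is computed with the $p$-th power sum according to \eqref{abs-gradient-convention},
\[ \energy{1,p}(\mu) = \int \abs{\nabla \mu^{\frac{1}{p}}(X)}^p \de X = \sum_{j=1}^N \int \abs{\nabla_{x_j} \mu^{\frac{1}{p}}(X)}^p \de X. \]
By Fubini, the $j$-th summand can be written as $\int \de x_j \int \abs{\nabla_{x_j} \mu^{1/p}(X)}^p \dX[j]$. On the other hand, by \autoref{regular-marginals},
\[ \abs{\nabla (\proj{\mu}{j})^{\frac{1}{p}}(x_j)}^p = \frac{1}{p^p}\, \proj{\mu}{j}(x_j)^{1-p} \abs{\nabla \proj{\mu}{j}(x_j)}^p, \]
and \autoref{p-domination} bounds the right-hand side from above by $\int \abs{\nabla_{x_j} \mu^{1/p}(X)}^p \dX[j]$. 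Integrating this pointwise-in-$x_j$ inequality and summing over $j$ gives $\sum_{j=1}^N \energy{1,p}(\proj{\mu}{j}) \leq \energy{1,p}(\mu)$, which is the first statement and also yields the ``$\geq$'' direction of the second.

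\textbf{Upper bound via product measure.} For the converse inequality in the infimum formula, I would take the tensor product $\mu = \rho_1 \otimes \dotsm \otimes \rho_N$, whose density factorizes as $\mu(X) = \prod_{j=1}^N \rho_j(x_j)$ so that $\mu^{1/p}(X) = \prod_{j=1}^N \rho_j^{1/p}(x_j)$. Differentiating in $x_j$ (using \autoref{p-root} for each factor) and using the multiplicativity of the product together with the fact that $\int \rho_i = 1$ for $i \neq j$, one obtains
\[ \int \abs{\nabla_{x_j} \mu^{\frac{1}{p}}(X)}^p \de X = \int \abs{\nabla \rho_j^{\frac{1}{p}}(x_j)}^p \de x_j \; \prod_{i \neq j} \int \rho_i(x_i) \de x_i = \energy{1,p}(\rho_j). \]
Summing over $j$ gives $\energy{1,p}(\mu) = \sum_{j=1}^N \energy{1,p}(\rho_j) < \infty$, which shows both that $\mu \in \prob^{1,p}((\R^d)^N)$ (its density has a $W^{1,p}$ $p$-th root, since the $L^p$ norm of $\mu^{1/p}$ is 1) and that it attains the lower bound. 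Clearly $\mu \in \Pi(\rho_1, \dotsc, \rho_N)$, so the infimum is attained and equals $\sum_{j=1}^N \energy{1,p}(\rho_j)$.

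\textbf{Obstacles.} There is essentially no technical obstacle: the pointwise step is the already-proved \autoref{p-domination}, and the only minor care needed is to justify the product rule for $\nabla \mu^{1/p}$ on the tensor product, which follows either from \autoref{p-root} applied coordinate-wise or from a direct smooth approximation of each $\rho_j^{1/p}$ in $W^{1,p}(\R^d)$.
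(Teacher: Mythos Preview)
Your proposal is correct and follows essentially the same route as the paper: the lower bound comes from combining \autoref{regular-marginals} and \autoref{p-domination} with the gradient convention \eqref{abs-gradient-convention}, and the upper bound is realized by the product measure $\rho_1 \otimes \dotsm \otimes \rho_N$, whose energy is computed exactly as you describe. The only cosmetic difference is that you state explicitly that the infimum is attained, which the paper's proof shows but does not highlight.
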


\begin{proof}
	Fix $\mu \in \prob^{1,p}\left( \Rdn \right)$. By \autoref{regular-marginals} and \autoref{p-domination} we have
	\[ \abs{\nabla (\proj{\mu}{j})^\frac{1}{p} (x_j)}^p = \frac{1}{p^p} \proj{\mu}{j}(x_j)^{1-p} \abs{\nabla \proj{\mu}{j}(x_j)}^p \leq \int \abs{\nabla_{x_j} \mu^\frac{1}{p} (X)}^p \dX[j]. \]
	
	Summing on $j$ and recalling the condition \eqref{abs-gradient-convention} we get the thesis. As for the second statement, due to the first one clearly we have
	\[ \inf \gra{\energy{1,p}(\mu) \st \mu \in \Pi(\rho_1, \dotsc, \rho_N)} \geq \sum_{j = 1}^N \energy{1,p}(\rho_j).  \]
	
	Let however $\mu(X) \eqdef \rho_1(x_1) \dotsm \rho_N(x_N)$; then $\mu$ is such that $\mu \in \prob^{1,p}\left( \Rdn \right)$ and
	\[ \nabla_{x_j} \mu^\frac{1}{p} = \nabla \rho_j^\frac{1}{p} \prod_{\substack{k = 1 \\ k \neq j}}^N \rho_k(x_k)^\frac{1}{p}; \]
	hence
	\[ \int \abs{\nabla_{x_j} \mu^\frac{1}{p}(X)}^p \dX = \int \abs{\nabla \rho_j^\frac{1}{p} (x_j)}^p \de x_j = \energy{1,p}(\rho_j). \]
	
	Finally summing on $j$ and taking into account the usual condition \eqref{abs-gradient-convention},
	\[ \energy{1,p}(\mu) = \sum_{j = 1}^N \energy{1,p}(\rho_j). \qedhere \]
\end{proof}

\begin{prop} \label{energy-convergence}
	Let $\eta \in C^\infty(\R^m)$, $\eta \geq 0$ such that $\int \eta = 1$ and define for $\eps > 0$
	\[ \eta^\eps(x) = \frac{1}{\eps^m} \eta\left( \frac x\eps \right). \]
	
	Then, for every $\mu \in \prob^{1,p}(\R^m)$,
	\[ \energy{1,p}(\mu * \eta^\eps) \leq \energy{1,p}(\mu) \quad \text{and} \quad \lim_{\eps \to 0} \energy{1,p}(\mu * \eta^\eps) = \energy{1,p}(\mu). \]
\end{prop}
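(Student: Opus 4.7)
\medskip

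\noindent\textbf{Proof proposal.} Write $f = \mu^{1/p} \in W^{1,p}(\R^m)$ and $u_\eps = \mu * \eta^\eps$. By \autoref{p-root} we have $\nabla \mu = p f^{p-1} \nabla f$ pointwise a.e., so $\nabla u_\eps = p\,(f^{p-1} \nabla f) * \eta^\eps$. The plan is to obtain both the monotonicity and the convergence from a single pointwise inequality, obtained via Jensen's inequality applied to the perspective function
\[
\Phi(s,t) = \frac{|t|^p}{s^{p-1}} = s\bigl|t/s\bigr|^p, \hsmallspace (s,t) \in \R^+ \times \R^d,
\]
which is jointly convex because it is the perspective of the convex map $t \mapsto |t|^p$. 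Applying Jensen against the probability measure $\eta^\eps(x-\cdot)\,\de y$ with $s(y) = f(y)^p$ and $t(y) = f(y)^{p-1} \nabla f(y)$ and multiplying by $p^p$ yields the key pointwise bound
\begin{equation} \label{jensen-mollifier}
u_\eps(x)^{1-p} \abs{\nabla u_\eps(x)}^p \leq p^p \bigl(\abs{\nabla f}^p * \eta^\eps\bigr)(x),
\end{equation}
where points $x$ with $u_\eps(x)=0$ are handled by noting that $\nabla f = 0$ a.e. on $\{f = 0\}$.

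Integrating \eqref{jensen-mollifier} shows that $\int u_\eps^{1-p}\abs{\nabla u_\eps}^p$ is finite, so \autoref{p-root} applies to $u_\eps \in W^{1,1}(\R^m)$: we get $u_\eps^{1/p} \in W^{1,p}(\R^m)$, i.e.\ $\mu * \eta^\eps \in \prob^{1,p}(\R^m)$, and
\[
\energy{1,p}(\mu * \eta^\eps) = \frac{1}{p^p}\int u_\eps^{1-p} \abs{\nabla u_\eps}^p \leq \int \abs{\nabla f}^p * \eta^\eps = \int \abs{\nabla f}^p = \energy{1,p}(\mu),
\]
which proves the first assertion.

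For the second assertion I will invoke \autoref{p-continuous-maps} with $u_n = \mu * \eta^{\eps_n} \to \mu = u$ (the convergence being in $W^{1,1}(\R^m)$ by classical mollifier theory, since $\mu \in W^{1,1}$). Set
\[
h_n(x) = p^p \bigl(\abs{\nabla f}^p * \eta^{\eps_n}\bigr)(x), \hsmallspace h(x) = p^p \abs{\nabla f(x)}^p.
\]
Inequality \eqref{jensen-mollifier} gives the required dominations $u_n^{1-p}\abs{\nabla u_n}^p \leq h_n$ and $u^{1-p}\abs{\nabla u}^p \leq h$. The integral condition \eqref{convergence-hypothesis} is in fact an equality, since convolution with $\eta^{\eps_n}$ preserves the $L^1$-mass. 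Moreover $h_n \to h$ strongly in $L^1$, so from any subsequence one can extract a further a.e.\ convergent one. All hypotheses of \autoref{p-continuous-maps} being met, we conclude $(\mu * \eta^{\eps_n})^{1/p} \to \mu^{1/p}$ in $W^{1,p}(\R^m)$, and in particular $\energy{1,p}(\mu * \eta^{\eps_n}) \to \energy{1,p}(\mu)$.

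The only delicate point is the joint convexity of $\Phi$ and the justification of \eqref{jensen-mollifier} on the set $\{u_\eps = 0\}$; once this pointwise estimate is in place, both claims reduce to machinery already developed in the previous section.
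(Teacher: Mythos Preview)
Your argument is correct and is essentially the paper's proof in different language. The key pointwise inequality \eqref{jensen-mollifier} that you obtain via Jensen for the perspective function $\Phi(s,t)=|t|^p/s^{p-1}$ is exactly the paper's inequality \eqref{eta-eps-domination}: the paper derives it by writing $\nabla\mu = \mu^{(p-1)/p}\cdot\mu^{(1-p)/p}\nabla\mu$ and applying H\"older with exponents $p$ and $p/(p-1)$, which is precisely the H\"older form of your Jensen step (indeed $\mu^{1-p}|\nabla\mu|^p = p^p|\nabla f|^p$, so the two right-hand sides coincide). For the convergence, both proofs feed this same domination into \autoref{p-continuous-maps} with $h_n = p^p\,|\nabla f|^p*\eta^{\eps_n}$ and $h = p^p|\nabla f|^p$; your handling of the a.e.\ convergence of $h_n$ via $L^1$-convergence plus subsequence extraction is in fact slightly cleaner than the paper's bare assertion of pointwise a.e.\ convergence of the mollified sequence.
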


\begin{proof} By the H\"{o}lder inequality with exponents $p$ and $\frac{p}{p-1}$ we have
	\begin{ieee*}{rCl}
		\abs{\nabla (\mu * \eta^\eps)(x)} & = & \abs{((\nabla \mu) * \eta^\eps)(x)} \\
		& \leq & \int \abs{\nabla \mu(y)} \eta^\eps(x-y) \de y \\
		& \leq & \left( \int \mu(y)^{1-p} \abs{\nabla \mu(y)}^p \eta^\eps(x-y) \de y \right)^{\frac{1}{p}} (\mu * \eta^\eps)(x)^\frac{p-1}{p}.
	\end{ieee*}
	
	Since $\mu * \eta^\eps \in C^\infty(\R^m)$ we have
	\begin{ieee}{rCl}
		\abs{\nabla (\mu * \eta^\eps)(x)^\frac{1}{p}} & = & \frac{1}{p} (\mu * \eta^\eps)(x)^\frac{1-p}{p} \abs{\nabla (\mu * \eta^\eps)(x)} \nonumber \\
		& \leq & \frac{1}{p} \left( \int \mu(y)^{1-p} \abs{\nabla \mu(y)}^p \eta^\eps(x-y) \de y \right)^{\frac{1}{p}}, \label{eta-eps-domination}
	\end{ieee} 
	whence
	\begin{ieee*}{rCl}
		\energy{1,p}(\mu * \eta^\eps) & = & \int \abs{\nabla (\mu * \eta^\eps)^\frac{1}{p}(x)}^p \de x \\
		& \leq & \frac{1}{p^p} \int \mu(y)^{1-p} \abs{\nabla \mu(y)}^p \eta^\eps(x-y) \de y \de x \\
		& = & \int \abs{\nabla \mu^\frac{1}{p} (y)}^p \de y = \energy{1,p}(\mu).
	\end{ieee*}

	In order to prove the second part, it suffices to show that $(\mu * \eta^\eps)^\frac{1}{p}$ converges strongly to $\mu^\frac{1}{p}$ in $W^{1,p}$ to get that
	\[ \lim_{\eps \to 0} \energy{1,p}(\mu * \eta^\eps) = \lim_{\eps \to 0} \norm[W^{1,p}]{(\mu * \eta^\eps)^\frac{1}{p}}^p - 1 = \norm[W^{1,p}]{\mu^\frac{1}{p}} - 1 = \energy{1,p}(\mu). \]
	
	Since $(\mu^{1-p} \abs{\nabla \mu}^p) * \eta^\eps \longrightarrow \mu^{1-p} \abs{\nabla \mu}^p$ pointwise a.e., inequality \eqref{eta-eps-domination} gives a domination which allows to conclude thanks to \autoref{p-continuous-maps}.
\end{proof}


\section{Definition of the smoothing operator} \label{section-definition}

In this section we start to deal with the main problem of the paper, which we recall here.

\medskip
\textbf{Problem: } \textit{Given $\rho_1, \dotsc, \rho_N\in \prob^{1,p}(\R^d)$, and given $\mu \in \Pi(\rho_1, \dotsc, \rho_N)$, find a family $\left( \mu^\eps \right)_{\eps > 0}$ such that:
	\begin{enumerate}[(i)]
		\item $\mu^\eps \in \Pi(\rho_1, \dotsc, \rho_N);$
		\item $\mu^\eps \in \prob^{1,p} \left( \Rdn \right);$
		\item $\mu^\eps \wconv \mu$ as $\eps \to 0$.
\end{enumerate}}
\medskip

To this end, we will define an operator
\begin{ieee*}{rCcCc}
	\Theta & \colon & \R^+ \times \prob\left( \Rdn \right) & \longrightarrow & \prob\left( \Rdn \right) \\
	&& (\eps, \mu) & \longmapsto & \Theta^\eps[\mu]
\end{ieee*}
such that:
\begin{enumerate}[A.]
	\item \label{Theta-marginals} for every $\eps > 0$, for every $j = 1, \dotsc, N$,
	\[ \proj{\Theta^\eps[\mu]}{j} = \proj{\mu}{j}; \]
	\item \label{Theta-regularity} if $\proj{\mu}{j} \in \prob^{1,p}(\R^d)$ for evey $j = 1, \dotsc, N$, then
	\[ \Theta^\eps[\mu] \in \prob^{1,p}\left( \Rdn \right); \] 
	\item \label{Theta-Cb-continuity} for every $\phi \in C_b\left(\Rdn\right)$,
	\[ \lim_{\eps \to 0} \int \phi \de \Theta^\eps[\mu] = \int \phi \de \mu. \]
\end{enumerate}

This will give a universal construction which solves the problem: properties \ref{Theta-marginals}--\ref{Theta-Cb-continuity} ensure that, taking $\mu^\eps \eqdef \Theta^\eps[\mu]$, the requirements (i)--(iii) above are satisfied. Moreover, the smoothing operator $\Theta$ will also satisfy the following form of continuity with respect to the measure argument.

\begin{restatable}{thm}{mucontinuity} \label{mu-continuity}
	Let $\mu^n, \mu \in \prob\left( \Rdn \right)$ such that:
	\begin{enumerate}[(i)]
		\item $\mu^n \rightharpoonup \mu$ in duality with $C_b\left( \Rdn \right);$
		\item for every $j = 1, \dotsc, N$, $\proj{\mu^n}{j} \in \prob^{1,p}(\R^d)$ and $\proj{\mu}{j} \in \prob^{1,p}(\R^d)$, with
		\[ \lim_{n \to \infty} d^{1,p} \left( \proj{\mu^n}{j}, \proj{\mu}{j} \right) = 0. \]
	\end{enumerate} 
	
	Then, for every $\eps > 0$, 
	\[ \lim_{n \to \infty} d^{1,p} \left( \Theta^\eps[\mu^n] , \Theta^\eps[\mu] \right) = 0. \]
\end{restatable}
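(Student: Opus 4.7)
My plan is to reduce to a dominated-convergence argument and then apply Proposition \ref{p-continuous-maps}, paralleling the proof of Theorem \ref{pi-continuous}. The smoothing operator $\Theta^\eps$, whose construction is given in the next section, will have a density built from two ingredients: a convolution $\mu * \eta^\eps$ with a fixed smooth kernel at scale $\eps$, and explicit factors depending only on the marginals $\proj{\mu}{j}$. At fixed $\eps > 0$, both ingredients are continuous under our hypotheses: for every multi-index $\alpha$ one has $\partial^\alpha \eta^\eps \in C_b(\Rdn)$, so $\partial^\alpha(\mu^n * \eta^\eps)(X) = \int (\partial^\alpha \eta^\eps)(X - Y)\,\de \mu^n(Y)$ converges pointwise to the analogous quantity for $\mu$ by hypothesis (i); and by hypothesis (ii), the marginal factors converge in $W^{1,p}(\R^d)$ and hence, after extracting a subsequence, also pointwise a.e.\ together with their gradients.

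Combining these two convergences, along any extracted subsequence I would obtain pointwise a.e.\ convergence of both the density $\Theta^\eps[\mu^n]$ and the gradient $\nabla \Theta^\eps[\mu^n]$ to the corresponding quantities for $\Theta^\eps[\mu]$. From here, I would apply Proposition \ref{p-continuous-maps} on $\Rdn$ to upgrade this into $W^{1,p}$-convergence of the $p$-th roots, which is exactly $d^{1,p}(\Theta^\eps[\mu^n],\Theta^\eps[\mu]) \to 0$. The dominating function $h_n$ required by the Proposition should be built using Lemma \ref{p-domination} applied to the marginal expressions appearing in $\Theta^\eps$: since $\Theta^\eps[\mu^n]$ is $W^{1,p}$-regular by Property \ref{Theta-regularity} with energy controllable in terms of $\sum_j \energy{1,p}(\proj{\mu^n}{j})$ plus $n$-independent $\eps$-corrections, condition \eqref{convergence-hypothesis} will follow from hypothesis (ii) together with Proposition \ref{energy-convergence}. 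Since the extracted subsequence was arbitrary, the standard Urysohn-type argument (every subsequence has a sub-subsequence converging to the same limit) gives convergence of the full sequence.

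The main obstacle lies in identifying this dominating function. Unlike in Theorem \ref{pi-continuous}, no $W^{1,p}$-type convergence of $\mu^n$ itself is available, so $(\Theta^\eps[\mu^n])^{1-p}|\nabla\Theta^\eps[\mu^n]|^p$ cannot be bounded in terms of $|\nabla(\mu^n)^{1/p}|^p$ as was done there. The fact that $\eps>0$ is fixed is what saves the situation: convolution against $\eta^\eps$ turns any pairing with $\mu^n$ into a smooth, pointwise-convergent quantity, and the construction of $\Theta^\eps$ must arrange that the relevant gradient bounds depend only on the marginals (which do converge strongly) and on the bounded smooth kernel. Verifying this uniform $n$-independent energy bound, and matching it against the pointwise-a.e.\ convergence obtained along the subsequence, is the technical crux of the argument.
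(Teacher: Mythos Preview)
Your plan is the paper's own: pass to a subsequence along which the marginals and their gradients converge pointwise a.e., deduce pointwise a.e.\ convergence of $\Theta^\eps[\mu^n]$ and $\nabla\Theta^\eps[\mu^n]$, then upgrade via generalized dominated convergence, with the Urysohn trick for the full sequence. Two points of detail need correcting. First, $\Theta^\eps[\mu](X)$ is not simply a product of $(\mu*\eta^\eps)(X)$ with marginal factors; it is an integral over an auxiliary variable $Y$ of a kernel $P^\eps[\mu](X,Y)$ involving $\Lambda^\eps[\mu](Y)/\prod_k(\proj{\mu}{k}*\eta^\eps)(y_k)$, so already the pointwise convergence of $\Theta^\eps[\mu^n](X)$ requires its own dominated-convergence step in $Y$, for which one needs a uniform-in-$n$ lower bound on the denominators --- this is the content of Lemma~\ref{upper-lower-lemma}, which uses tightness of the converging marginals (Lemma~\ref{tightness-lemma}). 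Second, the pointwise domination of $(\Theta^\eps[\mu^n])^{1-p}\abs{\nabla\Theta^\eps[\mu^n]}^p$ that you need is Lemma~\ref{nabla-domination}, not Lemma~\ref{p-domination}; integrating it yields $2^{p-1}p^p\norm[p]{\nabla(\proj{\mu^n}{j})^{1/p}}^p$ plus an $n$-independent constant, so condition~\eqref{convergence-hypothesis} follows directly from hypothesis~(ii) without invoking Proposition~\ref{energy-convergence}.
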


The proof of \autoref{mu-continuity} will be presented in \autoref{section-mu-continuity}.

\bigskip

Now we proceed with the construction of the smoothing operator $\Theta$. Given $\eps > 0$, let $\eta^\eps \colon \R^d \to \R^+$ be
\[ \eta^\eps(z) = \frac{1}{(2\pi\eps)^{d/2}} \exp \left( -\frac{\abs{z}^2}{2\eps} \right). \]

For $\mu \in \prob\left( \Rdn \right)$, we define the measure $\Lambda^\eps[\mu]$ as the convolution of $\mu$ with the kernel $\eta^\eps(x_1) \dotsm \eta^\eps(x_N)$, \ie, if $\psi \colon \Rdn \to \R$ is any continuous bounded function,
\begin{equation} \label{Lambda-definition}
	\int \psi(Y) \de \Lambda^\eps[\mu](Y) \eqdef \iint \psi(Y) \prod_{k = 1}^N \eta^\eps(y_k-x_k) \de \mu(X) \dY.
\end{equation}

Notice that $\Lambda^\eps[\mu]$ is absolutely continuous with respect to the Lebesgue measure, with density
\[ \Lambda^\eps[\mu](Y) = \int \prod_{k = 1}^N \eta^\eps(y_k-x_k) \de \mu(X). \] 

Finally, if $\psi \colon \Rdn \to \R$ is any continuous bounded function, we define $\Theta^\eps[\mu]$ via the expression
\begin{equation} \label{Theta-definition}
	\int \psi(X) \de \Theta^\eps[\mu](X) \eqdef \iint \psi(X) \prod_{k = 1}^N \frac{\eta^\eps(y_k-x_k)}{(\proj{\mu}{k} * \eta^\eps)(y_k)} \de \proj{\mu}{k} (x_k) \de \Lambda^\eps[\mu](Y).
\end{equation}

Here, with a slight abuse of notation, the denominator $(\proj{\mu}{k} * \eta^\eps)(y_k)$ denotes the density of the measure $\proj{\mu}{k} * \eta^\eps$ evaluated at $y_k$, namely
\[ (\proj{\mu}{k} * \eta^\eps)(y_k) = \int \eta^\eps(y_k-x_k) \de \proj{\mu}{k}(x_k), \]
and is always strictly positive, since $\proj{\mu}{k}$ is a probability and $\eta^\eps > 0$.

\begin{remark} This construction fits into the general framework for the composition of transport plans, as in \cite[Section 5.3]{ambrosio2008gradient}. Indeed, the definition of $\Theta^\eps[\mu]$ may be seen as follows: as a first step we regularize $\mu$ by convolution; secondly, we consider the 2-transport plans $\beta_j$ for $j = 1, \dotsc, N$ defined by
\[ \int \phi(x,y) \de \beta_j(x,y) = \int \phi(x,y) \eta^\eps(x-y) \de \proj{\mu}{j}(y) \de y \]
for any $\phi \in C_b(\R^d \times \R^d)$. Notice that $\beta_j$ has marginals $\proj{\mu}{j} * \eta^\eps$ and $\proj{\mu}{j}$. Then $\Theta^\eps[\mu]$ corresponds to the composition of $\Lambda^\eps[\mu]$ with $\beta_j$ on each corresponding $j$-th marginal.
\end{remark}

\begin{lemma}[Property \ref{Theta-marginals}] \label{basic-properties} Let $\mu \in \prob\left( \Rdn \right)$. Then for every $\eps > 0$ and for every $j = 1, \dotsc, N$ the following hold.
\begin{enumerate}[(i)]
 \item $\proj{\Lambda^\eps[\mu]}{j} = \proj{\mu}{j} * \eta^\eps;$
 \item $\proj{\Theta^\eps[\mu]}{j} = \proj{\mu}{j}$.
\end{enumerate}
\end{lemma}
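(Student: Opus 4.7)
Both statements are established by testing against an arbitrary $\phi \in C_b(\R^d)$, unfolding the definitions \eqref{Lambda-definition} and \eqref{Theta-definition}, and applying Fubini's theorem. Since every integrand involved is a non-negative continuous function multiplied by a bounded continuous $\phi$, the use of Fubini requires no delicate justification.

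For (i), I would compute
\[
\int \phi(y_j) \de \proj{\Lambda^\eps[\mu]}{j}(y_j) = \iint \phi(y_j) \prod_{k=1}^N \eta^\eps(y_k - x_k) \de \mu(X) \dY
\]
and integrate out the variables $y_k$ for $k \neq j$, each of which contributes $\int \eta^\eps(y_k - x_k) \de y_k = 1$. The resulting expression is $\int (\eta^\eps * \phi)(x_j) \de \mu(X)$, which depends on $X$ only through $x_j$, hence equals $\int (\eta^\eps * \phi)(x_j) \de \proj{\mu}{j}(x_j) = \int \phi \de (\proj{\mu}{j} * \eta^\eps)$. This yields (i).

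For (ii), I would test $\Theta^\eps[\mu]$ against $\phi(x_j)$ via \eqref{Theta-definition} and first integrate out the variables $x_k$ for $k \neq j$; for each such $k$,
\[
\int \frac{\eta^\eps(y_k - x_k)}{(\proj{\mu}{k} * \eta^\eps)(y_k)} \de \proj{\mu}{k}(x_k) = 1
\]
by definition of the denominator (which is everywhere strictly positive, as noted in the excerpt). What remains is an integral of a function of $y_j$ alone against $\Lambda^\eps[\mu]$, which by part (i) equals an integral against $\proj{\mu}{j} * \eta^\eps$. The two factors of $(\proj{\mu}{j} * \eta^\eps)(y_j)$ cancel exactly, and after a final application of Fubini integrating $\eta^\eps(y_j - x_j)$ in $y_j$ we are left with $\int \phi(x_j) \de \proj{\mu}{j}(x_j)$.

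There is no genuine obstacle: the proof is a purely computational verification, and the only point to be mindful of is the order of integration and the fact that the normalising denominators in \eqref{Theta-definition} are strictly positive so the division is legitimate. Part (i) is used explicitly inside the proof of (ii) to turn an integral against $\Lambda^\eps[\mu]$ depending only on $y_j$ into an integral against $\proj{\mu}{j} * \eta^\eps$, which triggers the key cancellation.
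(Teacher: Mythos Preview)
Your proposal is correct and follows essentially the same approach as the paper's proof: both parts are handled by testing against $\phi \in C_b(\R^d)$, applying Fubini, integrating out the irrelevant variables (the $y_k$, $k\neq j$, in (i) and the $x_k$, $k\neq j$, in (ii)), and in (ii) invoking part (i) to reduce the $\Lambda^\eps[\mu]$ integral to one against $\proj{\mu}{j}*\eta^\eps$, triggering the cancellation.
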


\begin{proof} (i) If $\phi \in C_b(\R^d)$, by the Fubini's Theorem we have
\begin{align*}
 \int \phi(y_j) \de \proj{\Lambda^\eps[\mu]}{j}(y_j) &= \int \phi(y_j) \de \Lambda^\eps[\mu](Y) \\
  &= \int \phi(y_j) \prod_{k = 1}^N \eta^\eps(y_k-x_k) \de \mu(X) \dY \\
  &= \int \phi(y_j) \eta^\eps(y_j-x_j) \de \mu(X) \de y_j \\
  &= \int \phi(y_j) \eta^\eps(y_j-x_j) \de \proj{\mu}{j}(x_j) = \int \phi(y_j) \de (\proj{\mu}{j} * \eta^\eps) (y_j).
\end{align*}

(ii) Using (i), if $\phi \in C_b(\R^d)$, by the Fubini's Theorem we have
\begin{align*}
 \int \phi(x_j) \de (\proj{\Theta^\eps[\mu]}{j})(X) &= \int \phi(x_j) \de (\Theta^\eps[\mu])(X) \\
 &= \iint \phi(x_j) \prod_{k = 1}^N \frac{\eta^\eps(y_k-x_k)}{(\proj{\mu}{k} * \eta^\eps) (y_k)} \de \proj{\mu}{k} (x_k) \de \Lambda^\eps[\mu](Y) \\
 &= \int \phi(x_j) \frac{\eta^\eps(y_j-x_j)}{(\proj{\mu}{j} * \eta^\eps) (y_j)} \de \Lambda^\eps[\mu](Y) \de \proj{\mu}{j} (x_j) \\
 &= \int \phi(x_j) \frac{\eta^\eps(y_j-x_j)}{(\proj{\mu}{j} * \eta^\eps) (y_j)} \de (\proj{\mu}{j} * \eta^\eps) (y_j) \de \proj{\mu}{j} (x_j) \\
 &= \int \phi(x_j) \de \proj{\mu}{j} (x_j). \tag*{\qedhere}
\end{align*}
\end{proof}


\section{Regularity of $\Theta$} \label{section-regularity}

In this Section we prove that $\Theta$ satisfies property \ref{Theta-regularity} of Section \ref{section-definition}. Moreover, some additional estimates on the $W^{1,p}$-energy of $\Theta^\eps[\mu]$ also hold. Let $\mu \in \prob\left( \Rdn \right)$ such that $\proj{\mu}{j} \in \prob^{1,p}(\R^d)$ for every $j = 1, \dotsc, N$. Then $\Theta^\eps[\mu]$ is absolutely continuous with respect to the Lebesgue measure, with density given by
\[ \Theta^\eps[\mu](X) = \int P^\eps[\mu](X,Y) \dY, \]
where we denote by $P^\eps[\mu]$ the integral kernel appearing in \eqref{Theta-definition}, namely
\begin{equation}
	P^\eps[\mu](X, Y) \eqdef \prod_{k = 1}^N \frac{\eta^\eps(y_k-x_k)}{(\proj{\mu}{k} * \eta^\eps) (y_k)} \proj{\mu}{k}(x_k) \Lambda^\eps[\mu](Y).
\end{equation}

Let us denote by
\begin{ieee}{rCl}
	\nabla_{x_j} \Theta^\eps[\mu](X) & \eqdef & \frac{\nabla \proj{\mu}{j}(x_j)}{\proj{\mu}{j}(x_j)} \Theta^\eps[\mu](X) \nonumber \\
	&& {} - \int \frac{\nabla \eta^\eps(y_j - x_j)}{\eta^\eps(y_j-x_j)} P^\eps[\mu](X, Y) \dY. \label{nabla-j-definition}
\end{ieee}

We claim that $\nabla_{x_j} \Theta^\eps[\mu](X)$ is the weak gradient with respect to the $j$-th variable of $\Theta^\eps[\mu](X)$ in $W^{1,1}(\Rdn)$. Indeed, if $\psi \in C^\infty_c(\Rdn)$, by the Fubini's Theorem we may perform first the integration in $x_j$ to get
\begin{ieee*}{rCl}
	\int \nabla_{x_j} \psi(X) \Theta^\eps[\mu](X) \dX & = & \iint \nabla_{x_j} \psi(X) P^\eps[\mu](X, Y) \dX \dY \\
	& = & \iint \psi(X) \frac{\nabla \proj{\mu}{j}(x_j)}{\proj{\mu}{j}(x_j)} P^\eps[\mu](X, Y) \dX \dY \\
	&& {} - \iint \psi(X) \frac{\nabla \eta^\eps(y_j - x_j)}{\eta^\eps(y_j-x_j)} P^\eps[\mu](X, Y) \dX \dY \\
	& = & \int \psi(X) \frac{\nabla \proj{\mu}{j}(x_j)}{\proj{\mu}{j}(x_j)} \Theta^\eps[\mu](X) \dX \\
	&& {} - \int \psi(X) \int \frac{\nabla \eta^\eps(y_j - x_j)}{\eta^\eps(y_j-x_j)} P^\eps[\mu](X, Y) \dY \dX.
\end{ieee*}

To conclude that $\Theta^\eps[\mu] \in \prob^{1,p}\left(\Rdn\right)$, in view of \autoref{p-root}, it suffices to show a suitable domination, which is given by the following

\begin{lemma} \label{nabla-domination} Let $\mu \in \prob(\Rdn)$ such that $\proj{\mu}{j} \in \prob^{1,p}(\R^d)$ for every $j = 1, \dotsc, N$. Then
	\begin{ieee*}{l}
		\abs{\nabla_{x_j} \Theta^\eps[\mu](X)}^p \Theta^\eps[\mu](X)^{1-p} \\
		\leq 2^{p-1} \left(\frac{\abs{\nabla \proj{\mu}{j}(x_j)}^p}{\proj{\mu}{j}(x_j)^p} \Theta^\eps[\mu](X) + \int \frac{\abs{\nabla \eta^\eps(y_j - x_j)}^p}{\eta^\eps(y_j-x_j)^p} P^\eps[\mu](X, Y) \dY \right)
	\end{ieee*}
\end{lemma}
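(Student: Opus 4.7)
The plan is to bound the two summands in the defining expression
\begin{equation*}
	\nabla_{x_j} \Theta^\eps[\mu](X) = \frac{\nabla \proj{\mu}{j}(x_j)}{\proj{\mu}{j}(x_j)} \Theta^\eps[\mu](X) - \int \frac{\nabla \eta^\eps(y_j - x_j)}{\eta^\eps(y_j-x_j)} P^\eps[\mu](X, Y) \dY
\end{equation*}
separately, using only two elementary inequalities.

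\textbf{Step 1: split via the convexity inequality.} For any $a, b \in \R^d$ and $p \geq 1$ one has $\abs{a-b}^p \leq 2^{p-1}(\abs{a}^p + \abs{b}^p)$. Applying this to the two terms above yields
\begin{equation*}
	\abs{\nabla_{x_j} \Theta^\eps[\mu](X)}^p \leq 2^{p-1}\left( \frac{\abs{\nabla \proj{\mu}{j}(x_j)}^p}{\proj{\mu}{j}(x_j)^p} \Theta^\eps[\mu](X)^p + \abs{\int \frac{\nabla \eta^\eps(y_j - x_j)}{\eta^\eps(y_j-x_j)} P^\eps[\mu](X, Y) \dY}^p \right).
\end{equation*}
Multiplying through by $\Theta^\eps[\mu](X)^{1-p}$ converts the first summand directly into the first term of the claimed bound.

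\textbf{Step 2: Jensen on the integral term.} The point is that, for fixed $X$ with $\Theta^\eps[\mu](X) > 0$, the map $Y \mapsto P^\eps[\mu](X,Y)/\Theta^\eps[\mu](X)$ is a probability density by the definition $\Theta^\eps[\mu](X) = \int P^\eps[\mu](X,Y) \dY$. Jensen's inequality applied to the convex function $z \mapsto \abs{z}^p$ then gives
\begin{equation*}
	\abs{\int \frac{\nabla \eta^\eps(y_j - x_j)}{\eta^\eps(y_j-x_j)} \frac{P^\eps[\mu](X, Y)}{\Theta^\eps[\mu](X)} \dY}^p \leq \int \frac{\abs{\nabla \eta^\eps(y_j - x_j)}^p}{\eta^\eps(y_j-x_j)^p} \frac{P^\eps[\mu](X, Y)}{\Theta^\eps[\mu](X)} \dY.
\end{equation*}
Multiplying by $\Theta^\eps[\mu](X)^p$ and then by $\Theta^\eps[\mu](X)^{1-p}$ produces exactly the second term of the claimed bound. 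At points where $\Theta^\eps[\mu](X) = 0$ both sides of the main inequality vanish (the kernel $P^\eps[\mu](X,\cdot)$ is nonnegative and integrates to zero, so the integral terms vanish as well), so the estimate is trivially valid by convention.

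\textbf{Expected obstacle.} There is no deep obstacle here; the proof is purely algebraic manipulation of the definition. The one point requiring a bit of care is to verify that $P^\eps[\mu](X,\cdot)/\Theta^\eps[\mu](X)$ is genuinely a probability density in $Y$ so that Jensen applies, and to handle the null set $\{\Theta^\eps[\mu] = 0\}$ cleanly; both are immediate from the nonnegativity of all factors in the definition of $P^\eps[\mu]$.
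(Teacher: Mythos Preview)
Your proposal is correct and follows essentially the same approach as the paper. The paper first applies the triangle inequality and then H\"older with exponents $p$ and $\frac{p}{p-1}$ against $P^\eps[\mu](X,\cdot)$ (which is your Jensen step in disguise), leaving the convexity inequality $(a+b)^p \leq 2^{p-1}(a^p+b^p)$ implicit in the final ``and the thesis follows''; you simply apply the convexity inequality first and phrase the H\"older step as Jensen, which is a harmless reordering of the same two ingredients.
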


\begin{proof} By the triangular inequality we immediately get 
	\[ \abs{\nabla_{x_j} \Theta^\eps[\mu](X)} \leq \frac{\abs{\nabla \proj{\mu}{j}(x_j)}}{\proj{\mu}{j}(x_j)} \Theta^\eps[\mu](X) + \int \frac{\abs{\nabla \eta^\eps(y_j - x_j)}}{\eta^\eps(y_j-x_j)} P^\eps[\mu](X, Y) \dY. \]
	
	Using the H\"{o}lder inequality with exponents $p$ and $\frac{p}{p-1}$,
	\begin{ieee*}{l}
		\int \frac{\abs{\nabla \eta^\eps(y_j - x_j)}}{\eta^\eps(y_j-x_j)} P^\eps[\mu](X, Y) \dY \\
		\leq \left( \int \frac{\abs{\nabla \eta^\eps(y_j - x_j)}^p}{\eta^\eps(y_j-x_j)^p} P^\eps[\mu](X, Y) \dY \right)^\frac{1}{p} \Theta^\eps[\mu](X)^\frac{p-1}{p},
	\end{ieee*}	
	and the thesis follows.
\end{proof}

Finally we get the proof of property \ref{Theta-regularity}, together with the usual explicit formula for the weak gradient of $\Theta^\eps[\mu]^\frac{1}{p}$.

\begin{thm}[Property \ref{Theta-regularity}] \label{thm-regularity}
	Let $\mu \in \prob\left( \Rdn \right)$ such that $\proj{\mu}{j} \in \prob^{1,p}(\R^d)$ for every $j = 1, \dotsc, N$. Then $\Theta^\eps[\mu] \in \prob^{1,p}\left( \Rdn \right)$, and
	\[ \nabla_{x_j} \Theta^\eps[\mu]^{\frac{1}{p}} (X) = \frac{1}{p} \Theta^\eps[\mu](X)^{\frac{1-p}{p}} \nabla_{x_j} \Theta^\eps[\mu](X). \]
\end{thm}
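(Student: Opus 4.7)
My plan is to apply \autoref{p-root} to $u = \Theta^\eps[\mu]$. The computation displayed just above \autoref{nabla-domination} already shows that $\Theta^\eps[\mu] \in W^{1,1}(\Rdn)$ with weak partial derivative in the $x_j$ variable given by \eqref{nabla-j-definition}. Consequently, what remains is to verify the integrability condition
\[ \int \Theta^\eps[\mu](X)^{1-p} \abs{\nabla \Theta^\eps[\mu](X)}^p \dX < \infty, \]
which by convention \eqref{abs-gradient-convention} splits into a sum over $j = 1, \dotsc, N$ of integrals of the left-hand side of the pointwise bound in \autoref{nabla-domination}. I will control each of the two resulting summands separately.

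For the first summand of that bound, namely $\frac{\abs{\nabla \proj{\mu}{j}(x_j)}^p}{\proj{\mu}{j}(x_j)^p} \Theta^\eps[\mu](X)$, I integrate in $X$, apply Fubini performing first the $x_k$-integrations for $k \neq j$, and invoke property \ref{Theta-marginals} from \autoref{basic-properties}, which yields $\proj{\Theta^\eps[\mu]}{j} = \proj{\mu}{j}$. The result collapses to $\int \proj{\mu}{j}(x_j)^{1-p} \abs{\nabla \proj{\mu}{j}(x_j)}^p \de x_j = p^p \energy{1,p}(\proj{\mu}{j})$ via \autoref{p-root}, which is finite because $\proj{\mu}{j} \in \prob^{1,p}(\R^d)$ by hypothesis.

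For the second summand the plan is to exploit the explicit product structure of $P^\eps[\mu]$. Integrating $\iint \frac{\abs{\nabla \eta^\eps(y_j-x_j)}^p}{\eta^\eps(y_j-x_j)^p} P^\eps[\mu](X,Y) \dX \dY$ via Fubini, the integrations over $x_k$ for $k \neq j$ produce factors of $1$ by the very definition of $\proj{\mu}{k} * \eta^\eps$, while the integrations over $y_k$ for $k \neq j$ collapse $\Lambda^\eps[\mu]$ to its $j$-th marginal $\proj{\mu}{j} * \eta^\eps$ by \autoref{basic-properties}(i); this cancels the surviving denominator in $P^\eps[\mu]$. After the translation $z = y_j - x_j$ and using $\int \de \proj{\mu}{j} = 1$, I am left with $\int \frac{\abs{\nabla \eta^\eps(z)}^p}{\eta^\eps(z)^{p-1}} \de z$; since $\nabla \eta^\eps(z) = -(z/\eps) \eta^\eps(z)$, this reduces to a (finite) $p$-th moment of the Gaussian $\eta^\eps$.

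With both integrals bounded, \autoref{p-root} delivers both $\Theta^\eps[\mu]^{1/p} \in W^{1,p}(\Rdn)$ and the displayed gradient formula in one stroke. The only genuinely delicate point is the Fubini bookkeeping in the second summand, since $P^\eps[\mu]$ couples all the variables; however, its multiplicative form together with \autoref{basic-properties} makes the cancellations essentially automatic, so what remains is only the elementary moment estimate for the Gaussian kernel.
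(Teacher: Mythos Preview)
Your proposal is correct and follows essentially the same route as the paper's proof: apply \autoref{p-root}, use the pointwise bound of \autoref{nabla-domination}, reduce the first summand via the marginal identity $\proj{\Theta^\eps[\mu]}{j}=\proj{\mu}{j}$, and collapse the second summand by Fubini and the product structure of $P^\eps[\mu]$ to the Gaussian integral $\int \abs{\nabla \eta^\eps}^p / \eta^\eps{}^{p-1}$. Your write-up merely makes the Fubini bookkeeping for the second term more explicit than the paper does.
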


\begin{proof}
	Recalling \autoref{p-root}, it suffices to check that condition \eqref{finite-integral} holds. Using \autoref{nabla-domination} we have
	\begin{ieee*}{l}
		\int \abs{\nabla_{x_j} \Theta^\eps[\mu](X)}^p \Theta^\eps[\mu](X)^{1-p} \dX \\
		\leq 2^{p-1} \left(\int \frac{\abs{\nabla \proj{\mu}{j}(x_j)}^p}{\proj{\mu}{j}(x_j)^p} \Theta^\eps[\mu](X) \dX + \iint \frac{\abs{\nabla \eta^\eps(y_j - x_j)}^p}{\eta^\eps(y_j-x_j)^p} P^\eps[\mu](X, Y) \dY \dX \right) \\
		= 2^{p-1} \left(\int \frac{\abs{\nabla \proj{\mu}{j}(x_j)}^p}{\proj{\mu}{j}(x_j)^{p-1}} \de x_j + \int \frac{\abs{\nabla \eta^\eps(z)}^p}{\eta^\eps(z)^{p-1}} \de z \right) \\
		= 2^{p-1} p^p \norm[p]{\nabla (\proj{\mu}{j})^\frac{1}{p}}^p + C(d,\eps,p),
	\end{ieee*}
	where the latter is a constant depending only on the dimension $d$, the exponent $p$ and $\eps$.
\end{proof}

From \autoref{thm-regularity} we get also some estimates on the $W^{1,p}$-energy of $\Theta^\eps[\mu]$. In the case $p=2$ the Hilbertian structure allows to simplify some computation and to get sharper constants.

\begin{thm} \label{energy-estimate}
	Let $\mu \in \prob\left( \Rdn \right)$ such that $\proj{\mu}{j} \in \prob^{1,2}(\R^d)$ for every $j = 1, \dotsc, N$. Then
	\begin{equation} \label{energy-bound-1}
		\energy{1,2}(\Theta^\eps[\mu]) \leq \sum_{j = 1}^N \energy{1,2}(\proj{\mu}{j}) + \frac{Nc(d)}{\eps},
	\end{equation}
	where $c(d)$ is a constant depending only on the dimension $d$.
	
	If in addition $\mu \in \prob^{1,2}\left( \Rdn \right)$, then
	\begin{equation} \label{energy-bound-2}
		\energy{1,2}(\Theta^\eps[\mu]) \leq \sum_{j = 1}^N \left( \norm[2]{\nabla_{x_j} \sqrt{\Lambda^\eps[\mu]}} + \Delta(\eps,\mu) \right)^2.
	\end{equation}
	where
	\[ \Delta(\eps,\mu) = \sqrt{\energy{1,2}(\proj{\mu}{j}) - \energy{1,2}(\proj{\mu}{j} * \eta^\eps)}. \]
\end{thm}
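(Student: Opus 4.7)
For $p = 2$ the identity $\abs{\nabla \sqrt{f}}^2 = \abs{\nabla f}^2/(4f)$ reduces the task to estimating $\int \abs{\nabla_{x_j} \Theta^\eps[\mu]}^2/\Theta^\eps[\mu] \, \de X$ for each $j$ and summing. The formula \eqref{nabla-j-definition} rewrites as
\[ \nabla_{x_j}\Theta^\eps[\mu](X) = \int \left( \frac{\nabla\proj{\mu}{j}(x_j)}{\proj{\mu}{j}(x_j)} - \frac{\nabla\eta^\eps(y_j-x_j)}{\eta^\eps(y_j-x_j)} \right) P^\eps[\mu](X,Y) \, \de Y. \]
Cauchy--Schwarz in $Y$ (with the probability density $P^\eps/\Theta^\eps$) pulls the square inside; integrating in $X$ collapses the joint marginal of $(x_j, y_j)$ under $P^\eps$ to $\eta^\eps(y_j - x_j)\proj{\mu}{j}(x_j)$ by \autoref{basic-properties} and the normalizations in the definition of $P^\eps$. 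Expanding the square produces the self-term $4\energy{1,2}(\proj{\mu}{j})$, the Gaussian Fisher information $\int \abs{\nabla\eta^\eps(z)}^2/\eta^\eps(z)\, \de z = d/\eps$, and a cross term that vanishes because $\nabla \eta^\eps$ is odd. Dividing by $4$ and summing in $j$ yields \eqref{energy-bound-1} with $c(d) = d/4$.

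\textbf{A sharper identity for $\nabla_{x_j} \Theta^\eps[\mu]$.} For the second bound we exploit the fact that $\eta^\eps(y_j - x_j)$ is translation-invariant in $(x_j,y_j)$, so $(\nabla_{x_j} + \nabla_{y_j})\eta^\eps(y_j - x_j) = 0$. Since $\int \nabla_{y_j} P^\eps[\mu](X,Y)\,\de Y = 0$ (by decay of the Gaussian in $y_j$), we may add this zero; then the $\eta^\eps$-factor contributes nothing to the symmetrized logarithmic derivative, and what survives is
\[ \nabla_{x_j}\Theta^\eps[\mu](X) = \int \left[ \frac{\nabla_{y_j}\Lambda^\eps[\mu](Y)}{\Lambda^\eps[\mu](Y)} + \frac{\nabla\proj{\mu}{j}(x_j)}{\proj{\mu}{j}(x_j)} - \frac{\nabla(\proj{\mu}{j}*\eta^\eps)(y_j)}{(\proj{\mu}{j}*\eta^\eps)(y_j)} \right] P^\eps[\mu](X,Y) \, \de Y. \]
Write this as $A_j(X) + D_j(X)$, where $A_j$ carries the $\Lambda^\eps$-term and $D_j$ the other two. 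Minkowski's inequality gives
\[ \norm[2]{\nabla_{x_j}\Theta^\eps[\mu]/\sqrt{\Theta^\eps[\mu]}} \le \norm[2]{A_j/\sqrt{\Theta^\eps[\mu]}} + \norm[2]{D_j/\sqrt{\Theta^\eps[\mu]}}. \]

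\textbf{Estimating the two pieces.} For $A_j$, Cauchy--Schwarz in $Y$ followed by integration in $X$ using $\int P^\eps[\mu](X,Y)\,\de X = \Lambda^\eps[\mu](Y)$ (immediate from the normalization of the $X$-factors in $P^\eps$) gives $\norm[2]{A_j/\sqrt{\Theta^\eps[\mu]}} \le 2\norm[2]{\nabla_{y_j}\sqrt{\Lambda^\eps[\mu]}}$. For $D_j$, Cauchy--Schwarz plus the same marginal reduction leaves a double integral against $\eta^\eps(y_j - x_j)\proj{\mu}{j}(x_j)$; expanding the square, the cross term simplifies via $\nabla\proj{\mu}{j} * \eta^\eps = \nabla(\proj{\mu}{j} * \eta^\eps)$ to exactly $-8\energy{1,2}(\proj{\mu}{j}*\eta^\eps)$, and combining the three terms yields $4[\energy{1,2}(\proj{\mu}{j}) - \energy{1,2}(\proj{\mu}{j}*\eta^\eps)] = 4\Delta^2$, so $\norm[2]{D_j/\sqrt{\Theta^\eps[\mu]}} \le 2\Delta$. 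Dividing the combined Minkowski estimate by $2$, squaring, and summing in $j$ proves \eqref{energy-bound-2}. The non-routine step is the addition of the zero $\int \nabla_{y_j}P^\eps\,\de Y$: exploiting the translation invariance of the Gaussian kernel is precisely what forces the $\nabla_{y_j}\Lambda^\eps[\mu]/\Lambda^\eps[\mu]$ term to appear. Without this manipulation, Cauchy--Schwarz recovers only the coarser first bound.
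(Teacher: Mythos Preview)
Your proof is correct and follows essentially the same approach as the paper: the key identity you obtain by ``adding the zero $\int \nabla_{y_j}P^\eps\,\de Y$'' is exactly the paper's formula \eqref{gradient-formula} (which the paper derives via a change of variables / integration by parts in $y_j$), and your Cauchy--Schwarz estimates on $A_j$ and $D_j$ match the paper's estimates on $II$ and $I$. The only cosmetic difference is that you invoke Minkowski's inequality directly, whereas the paper introduces an auxiliary parameter $\tau_j$ and optimizes over it---which of course yields the same bound $(a+b)^2$.
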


\begin{proof}[Proof of \autoref{energy-estimate}] By \autoref{thm-regularity} we have
	\begin{ieee*}{rCl}
		\energy{1,2}(\Theta^\eps[\mu]) & = & \int \abs{\nabla \sqrt{\Theta^\eps[\mu]} (X)}^2 \dX \\
		& = & \frac{1}{4} \int \frac{\abs{\nabla \Theta^\eps[\mu](X)}^2}{\Theta^\eps[\mu](X)} \dX \\
		& = & \frac{1}{4} \sum_{j = 1}^N \int \frac{\abs{\nabla \proj{\mu}{j}(x_j)}^2}{\proj{\mu}{j}(x_j)^2} \Theta^\eps[\mu](X) \dX \\
		&& {} - \sum_{j = 1}^N \iint \frac{\nabla \proj{\mu}{j}(x_j) \cdot \nabla \eta^\eps(y_j - x_j)}{\proj{\mu}{j}(x_j)\eta^\eps(y_j-x_j)} P^\eps[\mu](X, Y) \dX \dY \\
		&& {} + \sum_{j = 1}^N \int \frac{1}{\Theta^\eps[\mu](X)} \abs{\int \frac{\nabla \eta^\eps(y_j - x_j)}{\eta^\eps(y_j-x_j)} P^\eps[\mu](X, Y) \dY}^2 \dX.
	\end{ieee*}
	
	We treat the three terms in order. First we have
	\begin{ieee*}{rCl}
		\frac{1}{4} \sum_{j = 1}^N \int \frac{\abs{\nabla \proj{\mu}{j}(x_j)}^2}{\proj{\mu}{j}(x_j)^2} \Theta^\eps[\mu](X) \dX & = & \frac{1}{4} \sum_{j = 1}^N \int \frac{\abs{\nabla \proj{\mu}{j}(x_j)}^2}{\proj{\mu}{j}(x_j)} \de x_j \\
		& = & \sum_{j = 1}^N \int \abs{\nabla \sqrt{\proj{\mu}{j}} (x_j)}^2 \de x_j \\
		& = & \sum_{j = 1}^N \energy{1,2}(\proj{\mu}{j}).
	\end{ieee*}

	The middle term vanishes. Indeed, using Fubini's theorem and a change of variables,
	\begin{ieee*}{l}
		\iint \frac{\nabla \proj{\mu}{j}(x_j) \cdot \nabla \eta^\eps(y_j - x_j)}{\proj{\mu}{j}(x_j)\eta^\eps(y_j-x_j)} P^\eps[\mu](X, Y) \dX \dY \\
		= \iint \frac{\nabla \proj{\mu}{j}(x_j) \cdot \nabla \eta^\eps(y_j - x_j)}{\proj{\mu}{j}(x_j)\eta^\eps(y_j-x_j)} \frac{\eta^\eps(y_j-x_j)}{(\proj{\mu}{j} * \eta^\eps)(y_j)} \proj{\mu}{j}(x_j) \Lambda^\eps[\mu](Y) \de x_j \dY \\
		= \iint \nabla \proj{\mu}{j}(x_j) \cdot \nabla \eta^\eps(y_j - x_j) \de x_j \de y_j \\
		= \left( \iint \nabla \proj{\mu}{j}(x_j) \de x_j \right) \cdot \left( \int \nabla \eta^\eps(z) \de z \right), 
	\end{ieee*}
	and the second term is zero, as it can be seen, for instance, integrating in spherical coordinates.
	
	Finally, by the Cauchy-Schwarz inequality,
	\begin{ieee*}{l}
		\abs{\int \frac{\nabla \eta^\eps(y_j - x_j)}{\eta^\eps(y_j-x_j)} P^\eps[\mu](X, Y) \dY}^2 \\
		\leq \left( \int \frac{\abs{\nabla \eta^\eps(y_j - x_j)}^2}{\eta^\eps(y_j-x_j)^2} P^\eps[\mu](X, Y) \dY \right) \int P^\eps[\mu](X, Y) \dY \\
		= \Theta^\eps[\mu](X) \int \frac{\abs{\nabla \eta^\eps(y_j - x_j)}^2}{\eta^\eps(y_j-x_j)^2} P^\eps[\mu](X, Y) \dY.
	\end{ieee*}

	Hence the third term is bounded by
	\begin{ieee*}{l}
		\sum_{j = 1}^N \iint \frac{\abs{\nabla \eta^\eps(y_j - x_j)}^2}{\eta^\eps(y_j-x_j)^2} P^\eps[\mu](X, Y) \dY \dX \\
		= \sum_{j = 1}^N \iint \frac{\abs{\nabla \eta^\eps(y_j - x_j)}^2}{\eta^\eps(y_j-x_j)} \proj{\mu}{j}(x_j) \Lambda^\eps[\mu](Y) \dY \de x_j \\
		= \sum_{j = 1}^N \iint \frac{\abs{\nabla \eta^\eps(y_j - x_j)}^2}{\eta^\eps(y_j-x_j)} \proj{\mu}{j}(x_j) \de y_j \de x_j \\
		= N \int \frac{\abs{\nabla \eta^\eps(z)}^2}{\eta^\eps(z)} \de z = \frac{Nc(d)}{\eps},
	\end{ieee*}
	where $c(d)$ is a constant depending only on the dimension $d$.
	
	In order to show the second part of the statement, notice that, if $\mu$ is $W^{1,p}$-regular, performing a change of variables in \eqref{nabla-j-definition} we may write
	\begin{ieee}{rCl}
		\nabla_{x_j} \Theta^\eps[\mu](X) & = & \int \left( \frac{\nabla \proj{\mu}{j}(x_j)}{\proj{\mu}{j}(x_j)} - \frac{\nabla (\proj{\mu}{j} * \eta^\eps)(y_j)}{(\proj{\mu}{j} * \eta^\eps)(y_j)}\right) P^\eps[\mu](X, Y) \dY \nonumber \\
		&& {} + \int \nabla_{x_j} \Lambda^\eps[\mu](Y) \prod_{j = 1}^N \frac{\eta^\eps(y_j - x_j)}{(\proj{\mu}{j} * \eta^\eps)(y_j)} \proj{\mu}{j}(x_j) \dY \label{gradient-formula} \\
		& = & \vcentcolon I(X) + II(X) \nonumber
	\end{ieee}
	
	We estimate both terms via the Cauchy-Schwarz inequality to get
	\begin{ieee*}{rCl}
		\abs{I(X)}^2 & \leq & \left( \int \abs{\frac{\nabla \proj{\mu}{j}(x_j)}{\proj{\mu}{j}(x_j)} - \frac{\nabla (\proj{\mu}{j} * \eta^\eps)(y_j)}{(\proj{\mu}{j} * \eta^\eps)(y_j)}}^2 P^\eps[\mu](X, Y) \dY \right) \int P^\eps[\mu](X, Y) \dY\\
		& = & \Theta^\eps[\mu](X) \int \abs{\frac{\nabla \proj{\mu}{j}(x_j)}{\proj{\mu}{j}(x_j)} - \frac{\nabla (\proj{\mu}{j} * \eta^\eps)(y_j)}{(\proj{\mu}{j} * \eta^\eps)(y_j)}}^2 P^\eps[\mu](X, Y) \dY,
	\end{ieee*}
	and
	\begin{ieee*}{rCl}
		\abs{II(X)}^2 & \leq & \left( \int \frac{\abs{\nabla_{x_j} \Lambda^\eps[\mu](Y)}^2}{\Lambda^\eps[\mu](Y)^2} P^\eps[\mu](X,Y) \dY \right) \int P^\eps[\mu](X,Y) \dY\\
		& = & \Theta^\eps[\mu](X) \int \frac{\abs{\nabla_{x_j} \Lambda^\eps[\mu](Y)}^2}{\Lambda^\eps[\mu](Y)^2} P^\eps[\mu](X,Y) \dY.
	\end{ieee*}
	
	It follows that
	\begin{ieee*}{rCl}
		\int \frac{\abs{I(X)}^2}{\Theta^\eps[\mu](X)} \dX & \leq & \iint \abs{\frac{\nabla \proj{\mu}{j}(x_j)}{\proj{\mu}{j}(x_j)} - \frac{\nabla (\proj{\mu}{j} * \eta^\eps)(y_j)}{(\proj{\mu}{j} * \eta^\eps)(y_j)}}^2 P^\eps[\mu](X,Y) \dX \dY \\
		& = & \iint \abs{\frac{\nabla \proj{\mu}{j}(x_j)}{\proj{\mu}{j}(x_j)} - \frac{\nabla (\proj{\mu}{j} * \eta^\eps)(y_j)}{(\proj{\mu}{j} * \eta^\eps)(y_j)}}^2 \eta^\eps(y_j-x_j) \proj{\mu}{j}(x_j) \de x_j \de y_j \\
		& = & \int \frac{\abs{\nabla \proj{\mu}{j}(x_j)}^2}{\proj{\mu}{j}(x_j)} \de x_j - \int \frac{\abs{\nabla (\proj{\mu}{j} * \eta^\eps)(y_j)}^2}{(\proj{\mu}{j} * \eta^\eps)(y_j)} \de y_j \\
		& = & 4 \energy{1,2}(\proj{\mu}{j}) - 4 \energy{1,2}(\proj{\mu}{j} * \eta^\eps)
	\end{ieee*}
	and
	\begin{ieee*}{rCl}
		\int \frac{\abs{I(X)}^2}{\Theta^\eps[\mu](X)} \dX & \leq & \iint \frac{\abs{\nabla_{x_j} \Lambda^\eps[\mu](Y)}^2}{\Lambda^\eps[\mu](Y)^2} P^\eps[\mu](X,Y) \dX \dY \\
		& = & \int \frac{\abs{\nabla_{x_j} \Lambda^\eps[\mu](Y)}^2}{\Lambda^\eps[\mu](Y)} \dY
	\end{ieee*}
	
	Hence, for every $\tau_j > 0$,
	\begin{ieee*}{l}
		\int \abs{\nabla_{x_j} \sqrt{\Theta^\eps[\mu]}(X)}^2 \dX \\
		\leq (1 + \tau_j) \frac{1}{4} \int \frac{\abs{I(X)}^2}{\Theta^\eps[\mu](X)} \dX + \left(1 + \tau_j^{-1} \right) \frac{1}{4} \int \frac{\abs{II(X)}^2}{\Theta^\eps[\mu](X)} \dX \\
		\leq (1 + \tau_j) \left( \energy{1,2}(\proj{\mu}{j}) - \energy{1,2}(\proj{\mu}{j} * \eta^\eps) \right) + \left(1 + \tau_j^{-1} \right) \int \frac{\abs{\nabla_{x_j} \Lambda^\eps[\mu](Y)}^2}{4 \Lambda^\eps[\mu](Y)} \dY.
	\end{ieee*} 

	Optimizing in $\tau_j$ and summing over $j = 1, \dotsc, N$ we get the thesis.
\end{proof}

\begin{thm} \label{p-energy-estimate}
	Let $\mu \in \prob\left( \Rdn \right)$. Then there exists a constant $c(d,p)$ depending on the dimension $d$ and the exponent $p$ such that
	\begin{equation} \label{p-energy-bound-1}
		\energy{1,p}(\Theta^\eps[\mu]) \leq \sum_{j = 1}^N \left( \energy{1,p}(\proj{\mu}{j})^\frac{1}{p} + \frac{c(d,p)}{\sqrt{\eps}} \right)^p.
	\end{equation}
	
	If in addition $\mu \in \prob^{1,p}\left( \Rdn \right)$ and $p > 1$, then
	\begin{equation} \label{p-energy-bound-2}
		\energy{1,p}(\Theta^\eps[\mu]) \leq \sum_{j = 1}^N \left( \norm[p]{\nabla_{x_j} \Lambda^\eps[\mu]^\frac{1}{p}} + c_p \Delta(\eps,p,\mu) \right)^p
	\end{equation}
	where
	\[ \Delta(\eps,p,\mu) = \begin{cases}
		\left[ \left( \energy{1,p}(\proj{\mu}{j}) + \energy{1,p}(\proj{\mu}{j} * \eta^\eps)\right)^{\frac{1}{p-1}} - 2 \energy{1,p}(\proj{\mu}{j} * \eta^\eps)^{\frac{1}{p-1}} \right]^\frac{p-1}{p} & 1 < p < 2 \\
		\left( \energy{1,p}(\proj{\mu}{j}) - \energy{1,p}(\proj{\mu}{j} * \eta^\eps)\right)^\frac{1}{p} & p \geq 2
	\end{cases} \]
	and $c_p$ is a suitable constant depending only on the exponent $p$.
\end{thm}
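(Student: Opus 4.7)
My plan mirrors the proof of \autoref{energy-estimate}, replacing the parallelogram law with Minkowski's inequality and, for the sharper bound, with uniform convexity of $L^p$. The two bounds correspond to the two decompositions of $\nabla_{x_j}\Theta^\eps[\mu]$ already used in the Hilbertian case. Throughout I write $E=\energy{1,p}(\proj{\mu}{j})$ and $E'=\energy{1,p}(\proj{\mu}{j}*\eta^\eps)$.

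For \eqref{p-energy-bound-1} I start from \eqref{nabla-j-definition} and write $\nabla_{x_j}\Theta^\eps[\mu]^{1/p}=\frac{1}{p}\Theta^\eps[\mu]^{(1-p)/p}(\alpha_j+\beta_j)$, with $\alpha_j$ the marginal contribution and $\beta_j$ the kernel-integral contribution. Minkowski in $L^p$ gives $\norm[p]{\nabla_{x_j}\Theta^\eps[\mu]^{1/p}}\leq\tfrac{1}{p}\norm[p]{\Theta^\eps[\mu]^{(1-p)/p}\alpha_j}+\tfrac{1}{p}\norm[p]{\Theta^\eps[\mu]^{(1-p)/p}\beta_j}$. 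The first summand, using $\int\Theta^\eps[\mu]\dX[j]=\proj{\mu}{j}$ (\autoref{basic-properties}), evaluates to $p\,E^{1/p}$. The second, via the H\"older trick of \autoref{nabla-domination} followed by Fubini and the identity $\int P^\eps[\mu](X,Y)\dX=\Lambda^\eps[\mu](Y)$, reduces to the Gaussian moment $\int|\nabla\eta^\eps(z)|^p\eta^\eps(z)^{1-p}\de z$, which a change of variables shows equals $c(d,p)^p\eps^{-p/2}$. Raising to the $p$-th power and summing over $j$ yields \eqref{p-energy-bound-1}.

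For the sharper bound \eqref{p-energy-bound-2} I use the decomposition \eqref{gradient-formula} $\nabla_{x_j}\Theta^\eps[\mu]=I(X)+II(X)$ and again apply Minkowski. The $II$-piece, rewritten as $\int\frac{\nabla_{x_j}\Lambda^\eps[\mu](Y)}{\Lambda^\eps[\mu](Y)}P^\eps[\mu](X,Y)\dY$, yields (via H\"older and Fubini exactly as for $\beta_j$ above) precisely $p\norm[p]{\nabla_{x_j}\Lambda^\eps[\mu]^{1/p}}$. The $I$-piece, via H\"older, reduces to bounding $\iint|a(x_j)-b(y_j)|^p\eta^\eps(y_j-x_j)\proj{\mu}{j}(x_j)\de x_j\de y_j$, where $a=\nabla\proj{\mu}{j}/\proj{\mu}{j}$ and $b=\nabla(\proj{\mu}{j}*\eta^\eps)/(\proj{\mu}{j}*\eta^\eps)$. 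The key structural observation is that $b(y_j)$ is the conditional expectation of $a(x_j)$ with respect to the probability density $g_{y_j}(x_j)=\eta^\eps(y_j-x_j)\proj{\mu}{j}(x_j)/(\proj{\mu}{j}*\eta^\eps)(y_j)$, so the integral measures a quantitative ``variance'' of $a$.

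For $p\geq2$ the pointwise strong convexity $|a|^p-|b|^p-p|b|^{p-2}b\cdot(a-b)\geq c_p|a-b|^p$, integrated against $g_{y_j}$, annihilates the linear term by the identity $E_{g_{y_j}}[a]=b(y_j)$ and produces $\iint|a-b|^p\eta^\eps\proj{\mu}{j}\leq c_p^{-1}p^p(E-E')$, giving $\Delta=(E-E')^{1/p}$. The main obstacle is the range $1<p<2$, where pointwise convexity is too weak and one must work at the level of the whole function spaces. Here I would regard $u(x,y)=a(x_j)$ and $v(x,y)=b(y_j)$ as elements of the weighted space $L^p(\eta^\eps\proj{\mu}{j}\,\de x\,\de y)$ and invoke Clarkson's second inequality $\norm[p]{u+v}^{p'}+\norm[p]{u-v}^{p'}\leq2(\norm[p]{u}^p+\norm[p]{v}^p)^{1/(p-1)}$ with $p'=p/(p-1)$. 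Since $E_{g_{y_j}}[u+v]=2v$, Jensen yields $\norm[p]{u+v}\geq2\norm[p]{v}$; combining gives $\norm[p]{u-v}^{p'}\leq2p^{p'}\bigl[(E+E')^{1/(p-1)}-2^{1/(p-1)}E'^{1/(p-1)}\bigr]$. The monotonicity $E\geq E'$ from \autoref{energy-convergence} makes the bracket non-negative, and since $2^{1/(p-1)}\geq2$ for $1<p\leq2$ one may weaken it to the announced $\Delta^{p'}$ at the price of an explicit constant $c_p=2^{(p-1)/p}$. A final Minkowski sum over $j=1,\dotsc,N$ completes the proof.
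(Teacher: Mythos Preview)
Your proposal is correct and follows essentially the same route as the paper. For \eqref{p-energy-bound-1} both arguments use Minkowski in $L^p$ together with the H\"older/Fubini reduction of \autoref{nabla-domination}; for \eqref{p-energy-bound-2} both use the decomposition \eqref{gradient-formula}, treat the $II$-term identically, and reduce the $I$-term to the weighted integral $\iint|a-b|^p\eta^\eps\proj{\mu}{j}$, exploiting the conditional-expectation identity $E_{g_y}[a]=b(y)$ and Clarkson's second inequality plus Jensen in the range $1<p<2$. The only cosmetic difference is in the $p\geq 2$ branch: you invoke the pointwise strong-convexity inequality $|a|^p-|b|^p-p|b|^{p-2}b\,(a-b)\geq c_p|a-b|^p$ directly, whereas the paper derives it by combining Clarkson's first inequality with the first-order convexity lower bound $|a+b|^p\geq|2b|^p+p|2b|^{p-2}(2b)(a-b)$; unwinding these two steps yields exactly your inequality with $c_p=2^{1-p}$, so the arguments are in fact the same.
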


\begin{proof}
	Combining \autoref{thm-regularity} and \autoref{nabla-domination} we get the first part of the statement, proceeding as in the proof of \autoref{energy-estimate} and using the triangular inequality in $L^p$. When the marginals are regular, we use \eqref{gradient-formula} to write $\nabla_{x_j} \Theta^\eps[\mu] (X) = I(X) + II(X)$, and estimate both terms via the H\"{o}lder inequality to get
	\begin{ieee*}{rCl}
		\abs{I(X)}^p & \leq & \Theta^\eps[\mu](X)^{p-1} \int \abs{\frac{\nabla \proj{\mu}{j}(x_j)}{\proj{\mu}{j}(x_j)} - \frac{\nabla (\proj{\mu}{j} * \eta^\eps)(y_j)}{(\proj{\mu}{j} * \eta^\eps)(y_j)}}^p P^\eps[\mu](X, Y) \dY, \\
		\abs{II(X)}^p & \leq & \Theta^\eps[\mu](X)^{p-1} \int \frac{\abs{\nabla_{x_j} \Lambda^\eps[\mu](Y)}^p}{\Lambda^\eps[\mu](Y)^p} P^\eps[\mu](X,Y) \dY.
	\end{ieee*}
	
	When we integrate with respect to the $X$ variable, the triangular inequality in $L^p$ gives
	\begin{ieee*}{Cl}
		\IEEEeqnarraymulticol{2}{l}{\left( \int \frac{\abs{\nabla_{x_j} \Theta^\eps[\mu] (X)}^p}{\Theta^\eps[\mu](X)^{1-p}} \dX \right)^\frac{1}{p}} \\
		\leq & \left( \iint \abs{\frac{\nabla \proj{\mu}{j}(x_j)}{\proj{\mu}{j}(x_j)} - \frac{\nabla (\proj{\mu}{j} * \eta^\eps)(y_j)}{(\proj{\mu}{j} * \eta^\eps)(y_j)}}^p P^\eps[\mu](X, Y) \dY \dX \right)^{\frac{1}{p}} \\
		& + \left( \iint \frac{\abs{\nabla_{x_j} \Lambda^\eps[\mu](Y)}^p}{\Lambda^\eps[\mu](Y)^p} P^\eps[\mu](X,Y) \dY \dX \right)^\frac{1}{p} \\
		= & \left( \iint \abs{\frac{\nabla \proj{\mu}{j}(x)}{\proj{\mu}{j}(x)} - \frac{\nabla (\proj{\mu}{j} * \eta^\eps)(y)}{(\proj{\mu}{j} * \eta^\eps)(y)}}^p \eta^\eps(y-x) \proj{\mu}{j}(x) \de x \de y \right)^{\frac{1}{p}} \\
		& + \left( \int \frac{\abs{\nabla_{x_j} \Lambda^\eps[\mu](Y)}^p}{\Lambda^\eps[\mu](Y)^{p-1}} \dY \right)^\frac{1}{p}. \\
	\end{ieee*}
	
	Now we recall the following inequalities by Clarkson \cite{clarkson1936uniformly}: if $f,g \in L^p(\nu)$, then
	\begin{ieee}{rCll} 
		\norm{\frac{f-g}{2}}^p & \leq & \frac{1}{2} \norm{f}^p + \frac{1}{2} \norm{g}^p - \norm{\frac{f+g}{2}}^p \quad & p \geq 2 \label{clarkson-1} \\
		\norm{\frac{f-g}{2}}^\frac{p}{p-1} & \leq & \left( \frac{1}{2} \norm{f}^p + \frac{1}{2}\norm{g}^p \right) ^{\frac{1}{p-1}} - \norm{\frac{f+g}{2}}^\frac{p}{p-1} \quad & 1 < p < 2, \label{clarkson-2}
	\end{ieee}
	where all the norms are $L^p(\nu)$ norms.
	
	If we apply \eqref{clarkson-1} on $\R^d \times \R^d$ with $f(x,y) = \frac{\nabla \proj{\mu}{j}(x)}{\proj{\mu}{j}(x)}$, $g(x,y) = \frac{\nabla (\proj{\mu}{j} * \eta^\eps)(y)}{(\proj{\mu}{j} * \eta^\eps) (y)}$ and $\frac{\de \nu}{\de \Leb^d}(x,y) = \eta^\eps(y-x) \proj{\mu}{j}(x)$, we get for $p \geq 2$
	\begin{ieee*}{Cl}
		\IEEEeqnarraymulticol{2}{l}{\iint \abs{\frac{\nabla \proj{\mu}{j}(x)}{\proj{\mu}{j}(x)} - \frac{\nabla (\proj{\mu}{j} * \eta^\eps)(y)}{(\proj{\mu}{j} * \eta^\eps)(y)}}^p \eta^\eps(y-x) \proj{\mu}{j}(x) \de x \de y} \\
		\leq & 2^{p-1} \iint \abs{\frac{\nabla \proj{\mu}{j}(x)}{\proj{\mu}{j}(x)}}^p \eta^\eps(y-x) \proj{\mu}{j}(x) \de x \de y \\
		& {} + 2^{p-1} \iint \abs{\frac{\nabla (\proj{\mu}{j} * \eta^\eps)(y)}{(\proj{\mu}{j} * \eta^\eps)(y)}}^p \eta^\eps(y-x) \proj{\mu}{j}(x) \de x \de y \\
		& {} - \iint \abs{\frac{\nabla \proj{\mu}{j}(x)}{\proj{\mu}{j}(x)} + \frac{\nabla (\proj{\mu}{j} * \eta^\eps)(y)}{(\proj{\mu}{j} * \eta^\eps)(y)}}^p \eta^\eps(y-x) \proj{\mu}{j}(x) \de x \de y \\
		=  & 2^{p-1} \int \frac{\abs{\nabla \proj{\mu}{j}(x)}^p}{\proj{\mu}{j}(x)^{p-1}} \de x + 2^{p-1} \int \frac{\abs{\nabla (\proj{\mu}{j} * \eta^\eps)(y)}^p}{(\proj{\mu}{j} * \eta^\eps)(y)^{p-1}} \de y \\
		& {} - \iint \abs{\frac{\nabla \proj{\mu}{j}(x)}{\proj{\mu}{j}(x)} + \frac{\nabla (\proj{\mu}{j} * \eta^\eps)(y)}{(\proj{\mu}{j} * \eta^\eps)(y)}}^p \eta^\eps(y-x) \proj{\mu}{j}(x) \de x \de y.
	\end{ieee*}
	
	On the other hand, using \eqref{clarkson-2}, for $1 < p < 2$ we have
	\begin{ieee*}{Cl}
		\IEEEeqnarraymulticol{2}{l}{\iint \abs{\frac{\nabla \proj{\mu}{j}(x)}{\proj{\mu}{j}(x)} - \frac{\nabla (\proj{\mu}{j} * \eta^\eps)(y)}{(\proj{\mu}{j} * \eta^\eps)(y)}}^p \eta^\eps(y-x) \proj{\mu}{j}(x) \de x \de y} \\
		\leq & \bigg[ \left( 2^{p-1} \int \frac{\abs{\nabla \proj{\mu}{j}(x)}^p}{\proj{\mu}{j}(x)^{p-1}} \de x + 2^{p-1} \int \frac{\abs{\nabla (\proj{\mu}{j} * \eta^\eps)(y)}^p}{(\proj{\mu}{j} * \eta^\eps)(y)^{p-1}} \de y \right)^{\frac{1}{p-1}} \\
		& {} - \left( \iint \abs{\frac{\nabla \proj{\mu}{j}(x)}{\proj{\mu}{j}(x)} + \frac{\nabla (\proj{\mu}{j} * \eta^\eps)(y)}{(\proj{\mu}{j} * \eta^\eps)(y)}}^p \eta^\eps(y-x) \proj{\mu}{j}(x) \de x \de y \right)^{\frac{1}{p-1}} \bigg]^{p-1}.
	\end{ieee*}
	
	Finally, by convexity of the function $z \mapsto \abs{z}^p$ on $\R^d$ we have
	\begin{ieee*}{Cl}
		\IEEEeqnarraymulticol{2}{l}{\iint \abs{\frac{\nabla \proj{\mu}{j}(x)}{\proj{\mu}{j}(x)} + \frac{\nabla (\proj{\mu}{j} * \eta^\eps)(y)}{(\proj{\mu}{j} * \eta^\eps)(y)}}^p \eta^\eps(y-x) \proj{\mu}{j}(x) \de x \de y} \\
		\geq & \iint \abs{\frac{2\nabla (\proj{\mu}{j} * \eta^\eps)(y)}{(\proj{\mu}{j} * \eta^\eps)(y)}}^p \eta^\eps(y-x) \proj{\mu}{j}(x) \de x \de y \\
		& {} + 2^{p-1} p \sum_{j = 1}^d \iint \abs{\frac{\partial_j (\proj{\mu}{j} * \eta^\eps)(y)}{(\proj{\mu}{j} * \eta^\eps)(y)}}^{p-2} \frac{\partial_j (\proj{\mu}{j} * \eta^\eps)(y)}{(\proj{\mu}{j} * \eta^\eps)(y)} \frac{\partial_j \proj{\mu}{j}(x)}{\proj{\mu}{j}(x)} \eta^\eps(y-x) \proj{\mu}{j}(x) \de x \de y \\
		& {} - 2^{p-1} p \sum_{j = 1}^d \iint \abs{\frac{\partial_j (\proj{\mu}{j} * \eta^\eps)(y)}{(\proj{\mu}{j} * \eta^\eps)(y)}}^{p-2} \frac{\abs{\partial_j (\proj{\mu}{j} * \eta^\eps)(y)}^2}{(\proj{\mu}{j} * \eta^\eps)(y)^2} \eta^\eps(y-x) \proj{\mu}{j}(x) \de x \de y \\
		= & 2^p \int \frac{\abs{\nabla (\proj{\mu}{j} * \eta^\eps)(y)}^p}{(\proj{\mu}{j} * \eta^\eps)(y)^{p-1}} \de y.
	\end{ieee*}
	
	Hence, for $p \geq 2$,
	\begin{ieee*}{Cl}
		\IEEEeqnarraymulticol{2}{l}{\iint \abs{\frac{\nabla \proj{\mu}{j}(x)}{\proj{\mu}{j}(x)} - \frac{\nabla (\proj{\mu}{j} * \eta^\eps)(y)}{(\proj{\mu}{j} * \eta^\eps)(y)}}^p \eta^\eps(y-x) \proj{\mu}{j}(x) \de x \de y} \\
		\leq & 2^{p-1} \left( \int \frac{\abs{\nabla \proj{\mu}{j}(x)}^p}{\proj{\mu}{j}(x)^{p-1}} \de x - \int \frac{\abs{\nabla (\proj{\mu}{j} * \eta^\eps)(y)}^p}{(\proj{\mu}{j} * \eta^\eps)(y)^{p-1}} \de y \right) \\
		= & 2^{p-1} p^p \left( \energy{1,p}(\proj{\mu}{j}) - \energy{1,p}(\proj{\mu}{j} * \eta^\eps)\right),
	\end{ieee*}
	while for $1 < p < 2$
	\begin{ieee*}{Cl}
		\IEEEeqnarraymulticol{2}{l}{\iint \abs{\frac{\nabla \proj{\mu}{j}(x)}{\proj{\mu}{j}(x)} - \frac{\nabla (\proj{\mu}{j} * \eta^\eps)(y)}{(\proj{\mu}{j} * \eta^\eps)(y)}}^p \eta^\eps(y-x) \proj{\mu}{j}(x) \de x \de y} \\
		\leq & 2^{p-1} p^p \left[ \left( \energy{1,p}(\proj{\mu}{j}) + \energy{1,p}(\proj{\mu}{j} * \eta^\eps)\right)^{\frac{1}{p-1}} - 2 \energy{1,p}(\proj{\mu}{j} * \eta^\eps)^{\frac{1}{p-1}} \right]^{p-1},
	\end{ieee*}
	
	Putting all together and summing on $j$ we get the thesis with $c_p = 2^{\frac{p-1}{p}}$.
\end{proof}

\begin{remark} \label{energy-remark}
	As one would expect, if the measure $\mu$ is not regular then the bound on the energy of $\Theta^\eps[\mu]$ diverges as $\eps$ approaches zero, as in \eqref{energy-bound-1} and \eqref{p-energy-bound-1}. On the contrary, if $\mu$ is $W^{1,p}$-regular then the bound on the energy of $\Theta^\eps[\mu]$ in \eqref{energy-bound-2} and \eqref{p-energy-bound-2} converges to the energy of $\mu$ as $\eps \to 0$. Indeed, on the one hand $\Delta(\eps,p,\mu)$ converges to zero by \autoref{energy-convergence}. On the other hand, let $\lambda^\eps(z_1, \dotsc, z_N) = \eta^\eps(z_1) \dotsm \eta^\eps(z_N)$, we have $\Lambda^\eps[\mu] = \mu * \lambda^\eps$, and hence 
	\[ \norm[p]{\nabla_{x_j} (\mu * \lambda^\eps)^\frac{1}{p}} \to \norm[p]{\nabla_{x_j} \mu^\frac{1}{p}}. \]
	
	When we raise to the power $p$ and sum over $j$ we get $\energy{1,p}(\mu)$in view of the usual condition \eqref{abs-gradient-convention}.
\end{remark}


\section{Continuity of $\Theta$ in $\eps$} \label{section-eps-continuity}

Finally, in this section we prove that $\Theta$ satisfies property \ref{Theta-Cb-continuity} of Section \ref{section-definition}. In order to simplify the notation, let as above $P^\eps[\mu]$ be the measure over $\Rdn \times \Rdn$ given by
\[ \iint \psi(X,Y) \de P^\eps[\mu](X,Y) \eqdef \iint \psi(X,Y) \prod_{k = 1}^N \frac{\eta^\eps(y_k-x_k)}{(\proj{\mu}{k} * \eta^\eps) (y_k)} \de \proj{\mu}{k} (x_k) \de \Lambda^\eps[\mu](Y), \]
already introduced in \autoref{section-regularity}, and let $Q^\eps[\mu]$ be the measure over $\Rdn \times \Rdn$ given by
\[ \iint \psi(X,Y) \de Q^\eps[\mu](X,Y) \eqdef \iint \psi(X,Y) \prod_{k = 1}^N \eta^\eps(y_k-x_k) \de \mu(X) \dY \]
for any $\psi \colon \Rdn \times \Rdn \to \R$ bounded and countinuous. 

\begin{remark} \label{P-Q-remark}
	Notice that, if $\psi \in C_b\left( \Rdn \right)$, then recalling definitions \eqref{Lambda-definition} and \eqref{Theta-definition} we have
	\begin{ieee*}{rCl}
		\iint \psi(X) \de P^\eps[\mu](X,Y) & = & \iint \psi(X) \prod_{k = 1}^N \frac{\eta^\eps(y_k-x_k)}{(\proj{\mu}{k} * \eta^\eps) (y_k)} \de \proj{\mu}{k} (x_k) \de \Lambda^\eps[\mu](Y) \\
		& = & \int \psi(X) \de \Theta^\eps[\mu](X),
	\end{ieee*}
	while
	\begin{ieee*}{rCl}
		\iint \psi(Y) \de P^\eps[\mu](X,Y) & = & \iint \psi(Y) \prod_{k = 1}^N \frac{\eta^\eps(y_k-x_k)}{(\proj{\mu}{k} * \eta^\eps) (y_k)} \de \proj{\mu}{k} (x_k) \de \Lambda^\eps[\mu](Y) \\
		& = & \int \psi(Y) \de \Lambda^\eps[\mu](Y).
	\end{ieee*}
	
	On the other hand,
	\begin{ieee*}{rCl}
		\iint \psi(X) \de Q^\eps[\mu](X,Y) & = & \iint \psi(X) \prod_{k = 1}^N \eta^\eps(y_k-x_k) \de \mu(X) \dY \\
		& = & \int \psi(X) \de \mu(X),
	\end{ieee*}
	while
	\begin{ieee*}{rCl}
		\iint \psi(Y) \de Q^\eps[\mu](X,Y) & = & \iint \psi(Y) \prod_{k = 1}^N \eta^\eps(y_k-x_k) \de \mu(X) \dY \\
		& = & \int \psi(Y) \de \Lambda^\eps[\mu](Y).
	\end{ieee*}
\end{remark}

Let us introduce a couple of technical results.

\begin{lemma} \label{eta-estimate} There exists a constant $K(d)$, depending only on the dimension $d$, such that for every $\eps, \tau > 0$,
	\[ \int_{\gra{\abs{z} \geq \tau}} \eta^\eps(z) \de z \leq K(d) e^{-\frac{\tau^2}{4\eps}}. \]
\end{lemma}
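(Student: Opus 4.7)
The plan is to use the standard Gaussian tail trick of splitting the exponent into two equal halves: one half furnishes a uniform pointwise bound on the region $\{|z|\geq \tau\}$, while the other half is integrated as a Gaussian.

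Concretely, I would write
\[ \eta^\eps(z) = \frac{1}{(2\pi\eps)^{d/2}} \exp\!\left(-\frac{|z|^2}{4\eps}\right) \exp\!\left(-\frac{|z|^2}{4\eps}\right), \]
and observe that on $\{|z|\geq \tau\}$ the first exponential factor is bounded by $e^{-\tau^2/(4\eps)}$. Pulling this factor out of the integral gives
\[ \int_{\{|z|\geq\tau\}} \eta^\eps(z)\,\de z \leq e^{-\tau^2/(4\eps)} \int_{\R^d} \frac{1}{(2\pi\eps)^{d/2}} \exp\!\left(-\frac{|z|^2}{4\eps}\right) \de z. \]
The remaining integrand is, up to a multiplicative constant, the density of a centered Gaussian with variance $2\eps$; its integral equals $(4\pi\eps)^{d/2}/(2\pi\eps)^{d/2} = 2^{d/2}$. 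Hence the claim holds with $K(d) = 2^{d/2}$.

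There is no real obstacle here: the only thing worth emphasizing is that the splitting $\tfrac{1}{2\eps} = \tfrac{1}{4\eps} + \tfrac{1}{4\eps}$ is exactly what makes the $\eps$-dependence in the exponent match the form $e^{-\tau^2/(4\eps)}$ that will presumably be convenient in later applications, while simultaneously leaving enough Gaussian decay to keep the remaining integral convergent and $\eps$-independent.
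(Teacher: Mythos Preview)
Your proof is correct and rests on the same key idea as the paper's: split the Gaussian exponent in half, use one half to extract the factor $e^{-\tau^2/(4\eps)}$ on the region $\{|z|\geq\tau\}$, and integrate the other half. The paper carries this out after first passing to spherical coordinates and substituting $s = r^2/(2\eps)$, which leads to a one-dimensional integral and the constant $K(d) = \tfrac{\sigma_d}{2}\bigl(\tfrac{2}{\pi}\bigr)^{d/2}\Gamma(d/2)$; your route stays in Cartesian coordinates and recognizes the leftover integral as a full Gaussian, giving $K(d)=2^{d/2}$ directly. The two constants in fact coincide (since $\sigma_d = 2\pi^{d/2}/\Gamma(d/2)$), so your argument is a cleaner path to the same result.
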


\begin{proof} It is just a computation: passing to spherical coordinates and denoting by $\sigma_d$ the surface area of the unit sphere in $\R^d$,
	\begin{align*}
	\int_{\gra{\abs{z} \geq \tau}} \eta^\eps(z) \de z &= \frac{\sigma_d}{(2\pi\eps)^{\frac{d}{2}}} \int_\tau^{+\infty} r^{d-1} e^{-\frac{r^2}{2\eps}} \de r \\
	& = \frac{\sigma_d}{2 \pi^{\frac{d}{2}}} \int_{\frac{\tau^2}{2 \eps}}^{+\infty} s^{\frac{d-2}{2}} e^{-s} \de s \\
	& \leq \frac{\sigma_d}{2 \pi^{\frac{d}{2}}} e^{-\frac{\tau^2}{4\eps}} \int_{\frac{\tau^2}{2 \eps}}^{+\infty} s^{\frac{d-2}{2}} e^{-\frac{s}{2}} \de s \\
	& \leq \frac{\sigma_d}{2 \pi^{\frac{d}{2}}} e^{-\frac{\tau^2}{4\eps}} \int_0^{+\infty} s^{\frac{d-2}{2}} e^{-\frac{s}{2}} \de s \\
	& = \frac{\sigma_d}{2} \left(\frac{2}{\pi}\right)^{\frac{d}{2}} \Gamma \left( \frac{d}{2} \right) e^{-\frac{\tau^2}{4\eps}}.
	\tag*{\qedhere}
	\end{align*}
\end{proof}

\begin{lemma} \label{diagonal-estimate}
	For every $r, \eps > 0$ and for every $\mu \in \prob\left( \Rdn \right)$,
	\begin{ieee*}{rCl}
		P^\eps[\mu]\left( \gra{\abs{X-Y} \geq r} \right) & \leq & NK(d) \exp \left( -\frac{r^2}{4N\eps} \right) \\
		Q^\eps[\mu]\left( \gra{\abs{X-Y} \geq r} \right) & \leq & NK(d) \exp \left( -\frac{r^2}{4N\eps} \right),
	\end{ieee*}
	where $K(d)$ is the constant in \autoref{eta-estimate}.
\end{lemma}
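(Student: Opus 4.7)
The plan is to reduce the $N$-dimensional estimate to the one-dimensional tail bound in \autoref{eta-estimate} via a union bound and the explicit structure of the kernels $P^\eps[\mu]$ and $Q^\eps[\mu]$.

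First I would observe that, with the Euclidean norm on $\Rdn$, the condition $\abs{X-Y} \geq r$ forces $\abs{x_j - y_j} \geq r/\sqrt{N}$ for at least one index $j$. Hence
\[ \gra{\abs{X-Y} \geq r} \subseteq \bigcup_{j=1}^N \gra{\abs{x_j - y_j} \geq r/\sqrt{N}}, \]
and by the union bound it is enough to show, for each $j$, that both $P^\eps[\mu]$ and $Q^\eps[\mu]$ give mass at most $K(d) \exp(-r^2/(4N\eps))$ to the set $A_j \eqdef \gra{\abs{x_j - y_j} \geq r/\sqrt{N}}$.

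For $Q^\eps[\mu](A_j)$, I would plug the indicator of $A_j$ into the defining formula of $Q^\eps[\mu]$ and integrate first in the $y_k$ with $k \neq j$, each producing $\int \eta^\eps = 1$; then integrating in $x_k$ for $k \neq j$ leaves $\de \proj{\mu}{j}(x_j)$. After a translation $z = y_j - x_j$ and using that $\proj{\mu}{j}$ is a probability,
\[ Q^\eps[\mu](A_j) = \int_{\gra{\abs{z} \geq r/\sqrt{N}}} \eta^\eps(z) \de z \leq K(d) \exp\left( -\frac{r^2}{4N\eps} \right) \]
by \autoref{eta-estimate}.

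For $P^\eps[\mu](A_j)$ I would do the same telescoping: integrating out $x_k$ for $k \neq j$ each factor contributes
\[ \int \frac{\eta^\eps(y_k - x_k)}{(\proj{\mu}{k} * \eta^\eps)(y_k)} \de \proj{\mu}{k}(x_k) = 1 \]
by the very definition of the convolution in the denominator; then integrating out $y_k$ for $k \neq j$ with respect to $\Lambda^\eps[\mu](Y)$ uses \autoref{basic-properties}(i) to replace $\Lambda^\eps[\mu]$ by its $j$-th marginal $\proj{\mu}{j} * \eta^\eps$, which cancels the remaining denominator. What is left is exactly the same expression as in the $Q^\eps[\mu]$ case, and \autoref{eta-estimate} again yields the bound $K(d) \exp(-r^2/(4N\eps))$. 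Summing over $j$ gives the factor $N$ in both inequalities. There is no real obstacle here; the only care needed is to perform the integrations in the right order so that the factors $(\proj{\mu}{k} * \eta^\eps)(y_k)$ telescope with the marginals of $\Lambda^\eps[\mu]$.
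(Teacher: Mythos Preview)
Your proposal is correct and follows essentially the same route as the paper: the same union-bound inclusion $\{|X-Y|\geq r\}\subseteq\bigcup_j\{|x_j-y_j|\geq r/\sqrt{N}\}$, the same reduction of each term by integrating out the $x_k$ and $y_k$ with $k\neq j$ (using $\int \eta^\eps(y_k-x_k)\,d\proj{\mu}{k}(x_k)=(\proj{\mu}{k}*\eta^\eps)(y_k)$ and \autoref{basic-properties}(i) for the $P^\eps$ case), and the same final appeal to \autoref{eta-estimate} after the change of variable $z=y_j-x_j$. The paper's write-up is slightly more compressed, but there is no substantive difference.
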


\begin{proof} 
	
	Observe that 
	\begin{ieee*}{l}
		\gra{(X,Y) \in \Rdn \times \Rdn \st \abs{X-Y} \geq r} \\
		\subseteq \bigcup_{j = 1}^N \gra{(X,Y) \in \Rdn \times \Rdn \st \abs{x_j-y_j} \geq \frac{r}{\sqrt{N}}}.
	\end{ieee*}
	
	Using \autoref{eta-estimate}, this yields
	\begin{align*}
	P^\eps[\mu](\gra{\abs{X-Y} \geq r}) &\leq \sum_{j = 1}^N P^\eps[\mu]\left( \gra{\abs{x_j-y_j} \geq \frac{r}{\sqrt{N}}} \right) \\
	& = \sum_{j = 1}^N \int_{\gra{\abs{x_j-y_j} \geq \frac{r}{\sqrt{N}}}} \prod_{k = 1}^N \frac{\eta^\eps(y_k-x_k)}{(\proj{\mu}{k} * \eta^\eps) (y_k)} \de \proj{\mu}{k} (x_k) \de \Lambda^\eps[\mu](Y) \\
	& = \sum_{j = 1}^N \int_{\gra{\abs{x_j-y_j} \geq \frac{r}{\sqrt{N}}}} \eta^\eps(y_j-x_j) \de \proj{\mu}{j}(x_j) \de y_j \\
	& = \sum_{j = 1}^N \int_{\gra{\abs{z_j} \geq \frac{r}{\sqrt{N}}}} \eta^\eps(z_j) \de \proj{\mu}{j}(x_j) \de z_j \\
	& = N \int_{\gra{\abs{z} \geq \frac{r}{\sqrt{N}}}} \eta^\eps(z) \de z \leq N K(d) \exp \left( -\frac{r^2}{4N\eps} \right).
	\end{align*}
	
	Analogously, 
	\begin{align*}
	Q^\eps[\mu](\gra{\abs{X-Y} \geq r}) &\leq \sum_{j = 1}^N Q^\eps[\mu]\left( \gra{\abs{x_j-y_j} \geq \frac{r}{\sqrt{N}}} \right) \\
	& = \sum_{j = 1}^N \int_{\gra{\abs{x_j-y_j} \geq \frac{r}{\sqrt{N}}}} \prod_{k = 1}^N \eta^\eps(y_k-x_k) \de \mu (X) \dY \\
	& = \sum_{j = 1}^N \int_{\gra{\abs{x_j-y_j} \geq \frac{r}{\sqrt{N}}}} \eta^\eps(y_j-x_j) \de \proj{\mu}{j}(x_j) \de y_j \\
	& = \sum_{j = 1}^N \int_{\gra{\abs{z_j} \geq \frac{r}{\sqrt{N}}}} \eta^\eps(z_j) \de \proj{\mu}{j}(x_j) \de z_j \\
	& = N \int_{\gra{\abs{z} \geq \frac{r}{\sqrt{N}}}} \eta^\eps(z) \de z \leq N K(d) \exp \left( -\frac{r^2}{4N\eps} \right).
	\tag*{\qedhere}
	\end{align*}
\end{proof}

We now move towards the proof of property \ref{Theta-Cb-continuity}. Even though it requires to test the convergence of $\Theta^\eps[\mu]$ to $\mu$ for all the continuous and bounded functions, first we prove the convergence for a smaller class, namely the continuous functions with compact support.

\begin{prop} \label{Cc-duality} Let $\mu \in \prob\left( \Rdn \right)$. Then, for every $\psi \in C_c\left( \Rdn \right)$,
	\[  \lim_{\eps \to 0} \int \psi(X) \de \Theta^\eps[\mu](X) = \int \psi(X) \de \mu(X). \]
\end{prop}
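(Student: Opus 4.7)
The plan is to use the auxiliary measures $P^\eps[\mu]$ and $Q^\eps[\mu]$ on $\Rdn \times \Rdn$ as a bridge between $\Theta^\eps[\mu]$ and $\mu$. By the identities collected in \autoref{P-Q-remark}, both $P^\eps[\mu]$ and $Q^\eps[\mu]$ have $\Lambda^\eps[\mu]$ as their $Y$-marginal, while their $X$-marginals are $\Theta^\eps[\mu]$ and $\mu$ respectively. In particular, for any $\psi \in C_c(\Rdn)$,
\[
\int \psi(Y) \de P^\eps[\mu](X,Y) = \int \psi \de \Lambda^\eps[\mu] = \int \psi(Y) \de Q^\eps[\mu](X,Y),
\]
which will let me swap $\psi(X)$ for the difference $\psi(X) - \psi(Y)$ under both integrals without changing the value.

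Concretely, first I would write
\[
\int \psi \de \Theta^\eps[\mu] - \int \psi \de \mu
= \iint \bigl(\psi(X) - \psi(Y)\bigr) \de P^\eps[\mu](X,Y)
- \iint \bigl(\psi(X) - \psi(Y)\bigr) \de Q^\eps[\mu](X,Y),
\]
and then estimate each of the two integrals separately. Since $\psi \in C_c(\Rdn)$, it is uniformly continuous, so for any $\delta > 0$ there exists $r = r(\delta) > 0$ such that $|\psi(X) - \psi(Y)| < \delta$ whenever $|X - Y| < r$. Splitting each of the two integrals over the regions $\{|X-Y|<r\}$ and $\{|X-Y|\geq r\}$ and using that $P^\eps[\mu]$ and $Q^\eps[\mu]$ are probability measures on the first piece and that $|\psi(X) - \psi(Y)| \leq 2\|\psi\|_\infty$ on the second, I obtain a bound of the form
\[
\left| \int \psi \de \Theta^\eps[\mu] - \int \psi \de \mu \right|
\leq 2\delta + 4 \|\psi\|_\infty \cdot N K(d) \exp\!\left(-\frac{r^2}{4N\eps}\right),
\]
using \autoref{diagonal-estimate} to control the mass of both measures outside the diagonal strip.

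Taking $\eps \to 0$ with $r$ fixed makes the exponential term vanish, giving $\limsup_{\eps \to 0} |\cdots| \leq 2\delta$; letting $\delta \to 0$ concludes the proof. There is no real obstacle here: the only minor point is verifying that $\int \psi(Y)$ gives the same value against $P^\eps[\mu]$ and $Q^\eps[\mu]$, which is immediate from \autoref{P-Q-remark}, and the uniform continuity of $\psi$, which is automatic from compact support. The compactness of $\supp \psi$ is essential only to get uniform continuity; the statement as written cannot be promoted to arbitrary $\psi \in C_b$ by this argument alone (that would require tightness on the $X$ side, which is addressed elsewhere in the paper).
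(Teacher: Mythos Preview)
Your proof is correct and follows essentially the same route as the paper: both arguments pass through $P^\eps[\mu]$ and $Q^\eps[\mu]$ via \autoref{P-Q-remark}, split near and far from the diagonal, and invoke uniform continuity together with \autoref{diagonal-estimate}. The only cosmetic difference is that the paper couples the threshold to $\eps$ by taking $r=\eps^{1/4}$ (so the exponential term becomes $\exp(-1/(4N\sqrt{\eps}))$ and both pieces are handled in a single limit), whereas you keep $r$ fixed and take $\eps\to 0$ first, then $\delta\to 0$.
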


\begin{proof}
	Fix $\psi \colon \Rdn \to \R$  a continuous function with compact support and $\delta > 0$. Since $\psi$ is absolutely continuous, let $\eps_0 > 0$ be such that
	\[ \abs{X-Y} < \eps_0^\frac{1}{4} \implies \abs{\psi(X) - \psi(Y)} < \delta. \]
	
	Using \autoref{P-Q-remark} we have:
	\begin{ieee*}{Cl}
		\IEEEeqnarraymulticol{2}{l}{\abs{ \int \psi(X) \de \Theta^\eps[\mu](X) - \int \psi(X) \de \mu(X)}} \\
		\leq & \abs{\int \psi(X) \de \Theta^\eps[\mu](X) - \int \psi(Y) \de \Lambda^\eps[\mu](Y)} \\
		&{} + \abs{\int \psi(Y) \de \Lambda^\eps[\mu](Y) - \int \psi(X) \de \mu(X)} \\
		\leq & \iint \abs{\psi(X) - \psi(Y)} \de P^\eps[\mu](X,Y) + \iint \abs{\psi(Y) - \psi(X)} \de Q^\eps[\mu](X,Y). \\
	\end{ieee*}
	
	Let us put
	\begin{ieee*}{rCl}
		A^\eps & = & \gra{(X,Y) \in \Rdn \times \Rdn \colon \abs{X-Y} \geq \eps^\frac{1}{4}} \\
		B^\eps & = & \gra{(X,Y) \in \Rdn \times \Rdn \colon \abs{X-Y} < \eps^\frac{1}{4}}.
	\end{ieee*}
	
	Using \autoref{diagonal-estimate},
	\begin{ieee*}{rCl}
		\iint_{A^\eps} \abs{\psi(X) - \psi(Y)} \de P^\eps[\mu](X,Y) & \leq & 2 \norm[\infty]{\psi} P^\eps[\mu] \left( A^\eps \right) \\
		& \leq & 2 NK(d) \norm[\infty]{\psi} \exp \left( -\frac{1}{4N\sqrt{\eps}} \right).
	\end{ieee*}
	which goes to zero as $\eps \to 0$. On the other hand, for every $\eps < \eps_0$ we have
	\begin{ieee*}{rCl}
		\iint_{B^\eps} \abs{\psi(X) - \psi(Y)} \de P^\eps[\mu](X,Y) & \leq & \delta P^\eps[\mu](B^\eps) \leq \delta.
	\end{ieee*}
	
	Treating the integral with respect to the measure $Q^\eps[\mu]$ in the same way we get the thesis since $\delta$ was arbitrary.
\end{proof} 

One way to extend the result of \autoref{Cc-duality} to the continuous and bounded functions is to use the Prokhorov's theorem (\autoref{prokhorov}), by first proving that, for every $\mu \in \prob\left( \Rdn \right)$, the family $\gra{\Theta^\eps(\mu)}_{\eps > 0}$ is tight. In view of \autoref{basic-properties}, this is actually a simple corollary of the following more general result.

\begin{thm}
	Let $\mathcal{M} \subseteq \prob\left( \Rdn \right)$ such that, for every $\mu, \nu \in \mathcal{M}$ and every $j = 1, \dotsc, N$,
	\[ \proj{\mu}{j} = \proj{\nu}{j}. \]
	Then $\mathcal{M}$ is tight. 
\end{thm}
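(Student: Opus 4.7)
The plan is to reduce the tightness of $\mathcal{M}$ on $\Rdn$ to the tightness of each of the (common) marginals on $\R^d$, which is automatic since a single Borel probability measure on a Polish space is tight. The common marginal assumption is exactly what lets us upgrade this pointwise tightness to a uniform statement over the whole family $\mathcal{M}$.

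First I would fix $\delta > 0$ and pick any $\mu_0 \in \mathcal{M}$ (the statement is vacuous if $\mathcal{M} = \emptyset$). Denote $\rho_j = \proj{\mu_0}{j}$ for $j = 1, \dotsc, N$; by hypothesis $\proj{\mu}{j} = \rho_j$ for every $\mu \in \mathcal{M}$. Since each $\rho_j$ is a Borel probability measure on the Polish space $\R^d$, it is tight, so I can choose a compact set $K_j \subseteq \R^d$ with
\[ \rho_j(\R^d \setminus K_j) \leq \frac{\delta}{N}. \]

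Next I take the product $K = K_1 \times \dotsm \times K_N \subseteq \Rdn$, which is compact as a finite product of compact sets. The complement satisfies
\[ \Rdn \setminus K \subseteq \bigcup_{j = 1}^N \left\{ X \in \Rdn \st x_j \notin K_j \right\} = \bigcup_{j = 1}^N (\pi^j)^{-1}(\R^d \setminus K_j), \]
so for every $\mu \in \mathcal{M}$, using subadditivity and the definition of the push-forward marginals,
\[ \mu(\Rdn \setminus K) \leq \sum_{j = 1}^N \mu\bigl((\pi^j)^{-1}(\R^d \setminus K_j)\bigr) = \sum_{j = 1}^N \proj{\mu}{j}(\R^d \setminus K_j) = \sum_{j = 1}^N \rho_j(\R^d \setminus K_j) \leq \delta. \]
Hence $\mu(K) \geq 1 - \delta$ for all $\mu \in \mathcal{M}$, which is the definition of tightness.

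There is no real obstacle here; the only thing worth noting is that the argument uses only that each marginal map $\pi \mapsto \proj{\pi}{j}$ is the same across the family, together with tightness of a single measure on $\R^d$. In particular, for the corollary relevant to this paper, taking $\mathcal{M} = \{\Theta^\eps[\mu]\}_{\eps > 0} \cup \{\mu\}$ one has $\proj{\Theta^\eps[\mu]}{j} = \proj{\mu}{j}$ by \autoref{basic-properties}, so tightness of $\{\Theta^\eps[\mu]\}_{\eps>0}$ follows immediately.
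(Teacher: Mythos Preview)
Your proof is correct and follows essentially the same route as the paper: choose compact sets on $\R^d$ carrying most of each common marginal, take their product, and bound the complement via subadditivity and the marginal constraint. The only cosmetic difference is that the paper picks a single compact $K\subseteq\R^d$ working simultaneously for all $\rho_j$ (and uses $K^N$), while you allow distinct $K_j$'s; the arguments are otherwise identical.
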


\begin{proof}
	Let $\rho_1, \dotsc, \rho_N$ be the common marginals of all the measures in $\mathcal{M}$, and fix $\delta > 0$. Since every $\rho_j$ is a probability, we may find $K \subseteq \R^d$ compact such that $\rho_j(K) \geq 1 - \frac{\delta}{N}$ for all $j = 1, \dotsc, N$. Let $K^N \eqdef K \times \dotsb \times K \subseteq \Rdn$, which is compact. We claim that $\mu(K^N) \geq 1 - \delta$ for all $\mu \in \mathcal{M}$. First notice that
	\[ \left( K^N \right) ^c = \bigcup_{j = 1}^N \big( \R^d \times \dotsb \times \underset{\stackrel{\uparrow}{j\text{-th}}}{K^c} \times \dotsb \times \R^d \big). \]
	
	Hence, for every $\mu \in \mathcal{M}$,
	\begin{ieee*}{rCl}
		\mu \left( (K^N)^c \right) & \leq & \sum_{j = 1}^N \mu \big( \R^d \times \dotsb \times \underset{\stackrel{\uparrow}{j\text{-th}}}{K^c} \times \dotsb \times \R^d \big) \\
		& = & \sum_{j = 1}^N \proj{\mu}{j}(K^c) = \sum_{j = 1}^N \rho_j(K^c) \leq \sum_{j = 1}^N \frac{\delta}{N} = \delta,
	\end{ieee*}	
	
	so that $\mu(K^N) \geq 1 - \delta$.
\end{proof}

Finally combining \autoref{prokhorov} with \autoref{Cc-duality} we get the convergence of $\Theta^\eps[\mu]$ to $\mu$ in duality with $C_b\left( \Rdn \right)$, as wanted.

\begin{thm} \label{Cb-duality} Let $\mu \in \prob\left( \Rdn \right)$. Then, for every $\psi \in C_b\left( \Rdn \right)$,
	\[  \lim_{\eps \to 0} \int \psi(X) \de \Theta^\eps[\mu](X) = \int \psi(X) \de \mu(X). \]
\end{thm}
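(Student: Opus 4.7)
The plan is to combine tightness of the family $\gra{\Theta^\eps[\mu]}_{\eps > 0}$ with the already established convergence against test functions in $C_c\left( \Rdn \right)$, upgrading this to convergence against all of $C_b\left( \Rdn \right)$ via a standard Prokhorov argument.

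First I would observe that by \autoref{basic-properties}, for every $\eps > 0$ and every $j = 1, \dotsc, N$ one has $\proj{\Theta^\eps[\mu]}{j} = \proj{\mu}{j}$. Thus the family $\mathcal{M} \eqdef \gra{\Theta^\eps[\mu]}_{\eps > 0}$ lies in a set of probabilities on $\Rdn$ with common marginals, and the tightness theorem proved immediately above applies, yielding that $\mathcal{M}$ is tight.

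Next, fix $\psi \in C_b\left( \Rdn \right)$ and consider any sequence $\eps_n \to 0$. By \autoref{prokhorov} together with tightness of $\mathcal{M}$, there exists a subsequence $\eps_{n_k}$ and a probability $\nu \in \prob\left( \Rdn \right)$ such that $\Theta^{\eps_{n_k}}[\mu] \wconv \nu$. For every $\phi \in C_c\left( \Rdn \right)$, on one hand $\int \phi \de \Theta^{\eps_{n_k}}[\mu] \to \int \phi \de \nu$ by weak convergence, and on the other hand $\int \phi \de \Theta^{\eps_{n_k}}[\mu] \to \int \phi \de \mu$ by \autoref{Cc-duality}. Hence $\int \phi \de \nu = \int \phi \de \mu$ for all $\phi \in C_c\left( \Rdn \right)$, which forces $\nu = \mu$ as Borel measures on $\Rdn$.

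Since the limit $\mu$ does not depend on the chosen subsequence, the Urysohn-type subsequence principle gives that $\Theta^{\eps_n}[\mu] \wconv \mu$ for any vanishing sequence $\eps_n$, and in particular $\int \psi \de \Theta^{\eps_n}[\mu] \to \int \psi \de \mu$ for the originally fixed $\psi \in C_b\left( \Rdn \right)$. As $\eps_n$ was arbitrary, this yields $\lim_{\eps \to 0} \int \psi \de \Theta^\eps[\mu] = \int \psi \de \mu$. No step is really an obstacle here, since the heavy lifting (tightness and $C_c$-convergence) has already been done; the only thing to be careful about is to apply the subsequence argument to upgrade $C_c$-convergence to full weak convergence against $C_b$, which is precisely what tightness provides via Prokhorov.
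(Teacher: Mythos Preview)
Your proposal is correct and follows essentially the same approach as the paper: tightness of $\{\Theta^\eps[\mu]\}_{\eps>0}$ via the common marginals, Prokhorov to extract a weakly convergent subsequence, and \autoref{Cc-duality} to identify the limit as $\mu$. The paper phrases the final step as a contradiction argument rather than invoking the Urysohn subsequence principle directly, but this is only a cosmetic difference.
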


\begin{proof}
	Suppose by contradiction that there exists $\delta > 0$, a sequence $\eps_n \searrow 0$ and a continuous bounded function $\psi \colon \Rdn \to \R$ such that
	\begin{equation} \label{Cb-absurd}
	\abs{\int \psi(X) \de \Theta^{\eps_n}[\mu](X) - \int \psi(X) \de \mu(X)} \geq \delta > 0.
	\end{equation}
	
	Denote for simplicity $\mu_n \eqdef \Theta^{\eps_n}[\mu]$. We know that the family $\gra{\mu_n}_{n \in \N}$ is tight, and by \autoref{prokhorov} we may extract a subsequence $\mu_{n_k}$ weakly converging to some $\nu \in \prob\left( \Rdn \right)$. However \autoref{Cc-duality} ensures that $\nu = \mu$, and hence $\mu_{n_k} \wconv \mu$, contradicting \eqref{Cb-absurd}. 
\end{proof}

We conclude this section with a final result about the continuity of $\Theta$. We proved in \autoref{Cb-duality} that $\Theta^\eps[\mu] \wconv \mu$ as $\eps \to 0$, which is the natural notion of convergence as far as $\mu$ is no more regular than a measure. However if $\mu$ has some better regurality, say $\mu \in \prob^{1,p}\left( \Rdn \right)$, since $\Theta^\eps[\mu] \in \prob^{1,p}$ for every $\eps > 0$ it is natural to ask whether $\Theta^\eps[\mu] \to \mu$ in the $d^{1,p}$-topology. The answer is positive, as stated in the following

\begin{thm} Let $\mu \in \prob^{1,p}\left( \Rdn \right)$, with $p > 1$. Then
	\[ \lim_{\eps \to 0} d^{1,p}(\Theta^\eps[\mu], \mu) = 0. \]
\end{thm}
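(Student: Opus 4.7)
The plan is to show $\Theta^\eps[\mu]^\frac{1}{p} \to \mu^\frac{1}{p}$ strongly in $W^{1,p}(\Rdn)$ (which is precisely $d^{1,p}(\Theta^\eps[\mu], \mu) \to 0$) by combining weak convergence in $W^{1,p}$ with convergence of the $W^{1,p}$-norms, and then invoking the uniform convexity of $W^{1,p}$ available for $p>1$.

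First I would establish boundedness and weak compactness. Bound \eqref{p-energy-bound-2} in \autoref{p-energy-estimate}, together with \autoref{energy-remark}, yields $\limsup_{\eps \to 0} \energy{1,p}(\Theta^\eps[\mu]) \leq \energy{1,p}(\mu)$. Since $\Theta^\eps[\mu]$ is a probability, $\norm[W^{1,p}]{\Theta^\eps[\mu]^\frac{1}{p}}^p = 1 + \energy{1,p}(\Theta^\eps[\mu])$ is uniformly bounded. From any sequence $\eps_n \to 0$ I would extract, by weak compactness of bounded sets in $W^{1,p}$, a subsequence along which $\Theta^{\eps_n}[\mu]^\frac{1}{p} \wconv g$; by the Rellich--Kondrachov theorem applied on each ball followed by a diagonal extraction one may further assume $\Theta^{\eps_n}[\mu]^\frac{1}{p} \to g$ almost everywhere and strongly in $L^p_\mathrm{loc}(\Rdn)$.

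Next I would identify $g = \mu^\frac{1}{p}$. Applying the elementary inequality \eqref{p-inequality} followed by H\"older, the strong $L^p_\mathrm{loc}$ convergence of the roots lifts to strong $L^1_\mathrm{loc}$ convergence of the densities, i.e.\ $\Theta^{\eps_n}[\mu] \to g^p$ in $L^1_\mathrm{loc}(\Rdn)$. Testing against an arbitrary $\phi \in C_c(\Rdn) \subseteq C_b(\Rdn)$ and combining with \autoref{Cb-duality} gives $\int \phi\, g^p = \int \phi \de \mu$ for every such $\phi$, hence $g^p = \mu$ a.e.\ and $g = \mu^\frac{1}{p}$. By the Urysohn subsequence principle the full family converges: $\Theta^\eps[\mu]^\frac{1}{p} \wconv \mu^\frac{1}{p}$ in $W^{1,p}(\Rdn)$.

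Finally, weak lower semicontinuity of the $W^{1,p}$-norm combined with the $\limsup$ bound from the first step gives $\lim_{\eps \to 0} \norm[W^{1,p}]{\Theta^\eps[\mu]^\frac{1}{p}} = \norm[W^{1,p}]{\mu^\frac{1}{p}}$. Since $W^{1,p}$ is uniformly convex for $1 < p < \infty$, weak convergence together with convergence of the norms upgrades automatically to strong convergence, which is exactly the claim. The main obstacle will be the identification step: the convergence of $\Theta^\eps[\mu]$ to $\mu$ in duality with $C_b$ provided by \autoref{Cb-duality} is a priori too coarse to pin down the pointwise limit of the roots, so one has to first use Rellich on balls to pass to pointwise a.e.\ convergence along a subsequence, then combine the two convergences in a localized $L^1$-sense.
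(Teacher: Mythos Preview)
Your proposal is correct and follows essentially the same route as the paper: boundedness in $W^{1,p}$ from \autoref{p-energy-estimate}, identification of the weak limit via \autoref{Cb-duality}, and the upgrade to strong convergence by uniform convexity and convergence of norms. The only difference is that you spell out the identification step (subsequences, Rellich on balls, $L^1_{\mathrm{loc}}$ convergence of the densities, Urysohn) that the paper compresses into a single sentence.
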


\begin{proof} Combining the fact that the family $\Theta^\eps[\mu]^\frac{1}{p}$ is bounded in $W^{1,p}$ due to \autoref{p-energy-estimate} and the result of \autoref{Cb-duality} we get that $\Theta^\eps[\mu]^\frac{1}{p} \to \mu^\frac{1}{p}$ weakly in $W^{1,p}\left( \Rdn \right)$ as $\eps \to 0$. Since $W^{1,p}$ is uniformly convex, we need only to check that
	\[ \lim_{\eps \to 0} \norm[W^{1,p}]{\Theta^\eps[\mu]^\frac{1}{p}} = \norm[W^{1,p}]{\mu^\frac{1}{p}}. \]
	
	The $L^p$-norms are identically equal to 1, so we need to prove the limit for the norms of the gradients. The weak convergence of $\nabla \Theta^\eps[\mu]^\frac{1}{p}$ to $\nabla \mu^\frac{1}{p}$ implies that
	\[
	\liminf_{\eps \to 0}\, \norm[L^p]{\nabla \Theta^\eps[\mu]^\frac{1}{p}} \geq \norm[L^p]{\nabla \mu^\frac{1}{p}}.
	\]
	
	The other inequality follows from \autoref{energy-remark}.
\end{proof}


\section{Continuity of $\Theta$ in $\mu$} \label{section-mu-continuity}

We devote this final section to the proof of \autoref{mu-continuity}. Throughout this section, $\eps$ will be fixed and positive. The main idea for the proof of \autoref{mu-continuity} is to use Lebesgue's dominated convergence theorem, but in order to do so we must have some fine upper-bound on the integral kernel $P^\eps[\mu]$ defining $\Theta^\eps[\mu]$. We refer to \eqref{Lambda-definition} and \eqref{Theta-definition} for the definitions. With a slight abuse of notation, since $\Lambda^\eps[\mu]$ and $\proj{\mu}{j} * \eta^\eps$ are absolutely continuous with respect to the Lebesgue measure, we will use the same symbol for the measure and its density.

\begin{lemma} \label{upper-lower-lemma} Let $\mu \in \prob\left( \Rdn \right)$. Then:
	\begin{enumerate}[(i)]
		\item 
		\[
		\Lambda^\eps[\mu](Y) \leq (2\pi\eps)^{-\frac{(N-1)d}{2N}} \prod_{k = 1}^N (\proj{\mu}{k} * \eta^\eps)(y_k)^\frac{1}{N}.
		\]
		\item Let $R > 0$, $\gamma \in [0,1]$ be such that $\proj{\mu}{j}(B(0,R)) \geq \gamma$. Then
		\[
		(\proj{\mu}{j} * \eta^\eps) (y_j) \geq \frac{\gamma}{(2\pi\eps)^{d/2}} \exp \left( -\frac{(\abs{y_j} + R)^2}{2\eps} \right).
		\]
	\end{enumerate}
	
\end{lemma}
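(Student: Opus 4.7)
The plan for part (i) is to decouple the $N$-fold product in the definition of $\Lambda^\eps[\mu]$ via the multivariable H\"older inequality. Since the function $(x_1,\ldots,x_N)\mapsto \eta^\eps(y_k-x_k)$ depends only on $x_k$, applying H\"older with $N$ slots and exponent $p_k=N$ in each (so $\sum 1/p_k=1$) gives
\[ \Lambda^\eps[\mu](Y)=\int\prod_{k=1}^N \eta^\eps(y_k-x_k)\, d\mu(X)\leq \prod_{k=1}^N\left(\int \eta^\eps(y_k-x_k)^N\, d\proj{\mu}{k}(x_k)\right)^{1/N}, \]
where the $k$-th integral has already been rewritten against the $k$-th marginal. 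The next task is to compare $\int \eta^\eps(y-x)^N\, d\proj{\mu}{k}(x)$ with $(\proj{\mu}{k}*\eta^\eps)(y)$. The cleanest route is the pointwise estimate $\eta^\eps(z)^{N-1}\leq \|\eta^\eps\|_\infty^{N-1}=(2\pi\eps)^{-(N-1)d/2}$, which yields
\[ \int \eta^\eps(y-x)^N\, d\proj{\mu}{k}(x)\leq (2\pi\eps)^{-(N-1)d/2}\,(\proj{\mu}{k}*\eta^\eps)(y); \]
an alternative is to use the scaling identity $\eta^\eps(z)^N=N^{-d/2}(2\pi\eps)^{-(N-1)d/2}\eta^{\eps/N}(z)$ obtained by direct Gaussian computation. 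Raising to the $1/N$-th power and taking the product over $k$ then produces the stated inequality.

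For part (ii), the argument is direct. Starting from
\[ (\proj{\mu}{j}*\eta^\eps)(y_j)=\int_{\R^d}\eta^\eps(y_j-x)\, d\proj{\mu}{j}(x), \]
I would restrict the integration domain to $B(0,R)$, where by hypothesis $\proj{\mu}{j}$ has mass at least $\gamma$. For $x\in B(0,R)$ the triangle inequality gives $|y_j-x|\leq |y_j|+R$, so the Gaussian kernel satisfies $\eta^\eps(y_j-x)\geq (2\pi\eps)^{-d/2}\exp(-(|y_j|+R)^2/(2\eps))$, uniformly in $x\in B(0,R)$. Pulling this constant out of the integral and using $\proj{\mu}{j}(B(0,R))\geq\gamma$ closes the argument.

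The main obstacle is part (i): the H\"older decoupling is the natural step, but obtaining the exact constant in front requires care in how one controls the surplus factor $\eta^\eps(z)^{N-1}$. The pointwise uniform bound above is the simplest and is essentially tight when $\proj{\mu}{k}$ concentrates near $y_k$, so up to the precise form of the constant the bound cannot be improved without further hypotheses on $\mu$. Part (ii), by contrast, is routine once one observes that $\eta^\eps$ is bounded below on any fixed bounded set.
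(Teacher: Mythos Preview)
Your proposal is correct and follows essentially the same approach as the paper: for (i) the paper also applies the generalized H\"older inequality with exponents $p_1=\dots=p_N=N$ and then uses the pointwise bound $\eta^\eps(z)\leq (2\pi\eps)^{-d/2}$ to control $\eta^\eps(z)^N$ by a constant times $\eta^\eps(z)$; for (ii) the paper likewise restricts to $B(0,R)$ and bounds the Gaussian kernel below by its minimum on that ball, which is exactly your triangle-inequality estimate.
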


\begin{proof} (i) We apply a general version of the H\"older's inequality with exponents $p_1 = \dotsb p_N = N$, and use the fact that $\eta^\eps(z) \leq \eta^\eps(0) = (2\pi\eps)^{-d/2}$, to get
	
	\begin{ieee*}{rCl}
		\Lambda^\eps[\mu](Y) & = & \int \prod_{j = 1}^N \eta^\eps(y_j-x_j) \de \mu(X) \\
		& \leq & \prod_{j = 1}^N \left( \int \eta^\eps(y_j-x_j)^N \de \mu(X) \right)^\frac{1}{N} \\
		& \leq & (2\pi\eps)^{-\frac{(N-1)d}{2N}} \prod_{j = 1}^N \left( \int \eta^\eps(y_j-x_j) \de \mu(X) \right)^\frac{1}{N} \\
		& = & (2\pi\eps)^{-\frac{(N-1)d}{2N}} \prod_{j = 1}^N (\proj{\mu}{j} * \eta^\eps)(y_j)^\frac{1}{N}.
	\end{ieee*}
	as wanted.
	
	\bigskip
	
	(ii) We start observing that
	\begin{align*}
	(\proj{\mu}{j} * \eta^\eps)(y_j) = \int \eta^\eps(y_j-x_j) \de \proj{\mu}{j}(x_j) \geq \int_{B(0,R)} \eta^\eps(y_j-x_j) \de \proj{\mu}{j}(x_j).
	\end{align*}
	
	When $x_j$ belongs to the ball $B(0,R)$, the minimum value of $\eta^\eps(y_j-x_j)$ is attained at $x_j = -R\frac{y_j}{\abs{y_j}}$, or at any boundary point if $y_j = 0$. Thus, in this region,
	\[
	\eta^\eps(y_j-x_j) \geq \frac{1}{(2\pi\eps)^{d/2}} \exp \left( -\frac{(\abs{y_j} + R)^2}{2\eps} \right)
	\]
	and the thesis follows easily.
\end{proof}

\begin{lemma} \label{tightness-lemma}
	Let $\rho_n, \rho \in \prob^{1,p}(\R^d)$ such that $\rho_n \to \rho$ in the $d^{1,p}$-topology. Then the family $\gra{\rho} \cup \gra{\rho_n}_{n \in \N}$ is tight. In particular, for every $\gamma > 0$ there exists $R > 0$ such that $\rho_n(B(0,R)) \geq 1 - \gamma$ and $\rho(B(0,R)) \geq 1 - \gamma$.
\end{lemma}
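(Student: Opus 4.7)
The plan is to reduce tightness to $L^1$-convergence of the densities, which in turn follows from the $d^{1,p}$-convergence through the already-established \autoref{p-continuous-maps}. This is the standard way to convert Sobolev-type convergence of the roots into integral control of the measures themselves, and tightness then falls out by comparing with the ambient probability $\rho$.

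First, I unfold the definition of $d^{1,p}$: the hypothesis $d^{1,p}(\rho_n, \rho) \to 0$ says precisely that $\rho_n^{1/p} \to \rho^{1/p}$ in $W^{1,p}(\R^d)$. The first direction of \autoref{p-continuous-maps} then gives $\rho_n \to \rho$ in $W^{1,1}(\R^d)$; in particular,
\[ \lim_{n \to \infty} \norm[1]{\rho_n - \rho} = 0. \]
From this point on, only the $L^1$-convergence of the densities is needed.

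Now fix $\gamma > 0$. Since $\rho$ is a Borel probability on $\R^d$, by monotone convergence there exists $R_0 > 0$ with $\rho(B(0,R_0)) \geq 1 - \gamma/2$. By the $L^1$-convergence above, choose $n_0$ such that $\norm[1]{\rho_n - \rho} \leq \gamma/2$ for every $n \geq n_0$. Then for $n \geq n_0$,
\[ \abs{\rho_n(B(0,R_0)) - \rho(B(0,R_0))} \leq \int_{B(0,R_0)} \abs{\rho_n - \rho} \leq \norm[1]{\rho_n - \rho} \leq \gamma/2, \]
so that $\rho_n(B(0,R_0)) \geq 1 - \gamma$.

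Finally, for each of the finitely many indices $n < n_0$, since $\rho_n \in \prob(\R^d)$ we can pick $R_n > 0$ with $\rho_n(B(0,R_n)) \geq 1 - \gamma$. Setting $R \eqdef \max(R_0, R_1, \dotsc, R_{n_0 - 1})$ we obtain $\rho(B(0,R)) \geq 1 - \gamma$ and $\rho_n(B(0,R)) \geq 1 - \gamma$ for every $n$, which is the "in particular" claim. Taking the compact set $K \eqdef \overline{B(0,R)}$ then shows that the family $\gra{\rho} \cup \gra{\rho_n}_{n \in \N}$ is tight. There is no real obstacle here; the only non-trivial input is the continuity statement \autoref{p-continuous-maps}, which supplies the passage from $W^{1,p}$-convergence of the roots to $L^1$-convergence of the measures.
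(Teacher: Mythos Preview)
Your proof is correct and shares the paper's key step: converting $d^{1,p}$-convergence of the roots into $W^{1,1}$ (hence $L^1$) convergence of the densities via \autoref{p-continuous-maps}. The only difference is in the finish: the paper observes that $L^1$-convergence implies weak convergence and then invokes Prokhorov's theorem (\autoref{prokhorov}) to conclude tightness, whereas you bypass Prokhorov with a direct $\gamma/2$ argument, handling the tail indices via $L^1$-closeness and the finitely many initial indices individually. Your route is slightly more elementary and makes the ``in particular'' clause explicit, while the paper's is shorter; substantively the arguments are the same.
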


\begin{proof}
	Due to Prokhorov's theorem (\autoref{prokhorov}), it suffices to show that $\rho_n \wconv \rho$. However, by \autoref{L1-convergence-marginals} we have the stronger property $\rho_n \to \rho$ in $W^{1,1}(\R^d)$.
\end{proof}

\begin{prop} \label{pointwise-prop} Suppose that $\mu^n \wconv \mu$, with $\proj{\mu^n}{j} \to \proj{\mu}{j}$ in $\prob^{1,p}(\R^d)$ and $\proj{\mu^n}{j} \to \proj{\mu}{j}$ pointwise a.e. on $\R^d$ for every $j = 1, \dotsc, N$. Then $\Theta^\eps[\mu^n] \to \Theta^\eps[\mu]$ pointwise a.e. on $\Rdn$.
	
	Assume in addiction that $\nabla \proj{\mu^n}{j} \to \nabla \proj{\mu}{j}$ pointwise a.e. on $\R^d$. Then $\nabla \Theta^\eps[\mu^n] \to \nabla \Theta^\eps[\mu]$ pointwise a.e. on $\Rdn$.
\end{prop}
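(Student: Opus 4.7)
The plan is to apply Lebesgue's dominated convergence theorem in the $Y$ variable, after the factorization
\[
	\Theta^\eps[\mu](X) = \prod_{k=1}^N \proj{\mu}{k}(x_k) \cdot I[\mu](X), \qquad I[\mu](X) \eqdef \int \prod_{k=1}^N \frac{\eta^\eps(y_k-x_k)}{(\proj{\mu}{k} * \eta^\eps)(y_k)} \Lambda^\eps[\mu](Y) \dY.
\]
The outer product of marginal densities converges pointwise a.e.\ by hypothesis, so the task reduces to proving $I[\mu^n](X) \to I[\mu](X)$ at every $X$ (and $I$ is in fact continuous in $X$). Pointwise convergence of the integrand at each fixed $Y$ is immediate: $\mu^n \wconv \mu$ tested against the bounded continuous map $(x_1,\dots,x_N) \mapsto \prod_k \eta^\eps(y_k-x_k)$ gives $\Lambda^\eps[\mu^n](Y) \to \Lambda^\eps[\mu](Y)$, and pushing $\mu^n \wconv \mu$ forward onto each marginal and testing against $\eta^\eps(y_k-\cdot)$ gives $(\proj{\mu^n}{k} * \eta^\eps)(y_k) \to (\proj{\mu}{k} * \eta^\eps)(y_k)$; the denominators are strictly positive, so they cause no problem.

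The main obstacle is building a dominating function integrable in $Y$ that is uniform in $n$, and here I would combine the two parts of \autoref{upper-lower-lemma}. Part (i) controls the numerator by $\Lambda^\eps[\mu^n](Y) \leq (2\pi\eps)^{-(N-1)d/(2N)} \prod_k (\proj{\mu^n}{k} * \eta^\eps)(y_k)^{1/N}$, leaving each denominator raised only to the milder power $(N-1)/N$. Since $\mu^n \wconv \mu$ implies $\proj{\mu^n}{k} \wconv \proj{\mu}{k}$, by \autoref{prokhorov} --- or directly by \autoref{tightness-lemma} --- the family of marginals is tight, so one may pick a single $R > 0$ such that $\proj{\mu^n}{k}(B(0,R)) \geq 1/2$ for all $n$ and $k$, and part (ii) then gives the uniform lower bound $(\proj{\mu^n}{k} * \eta^\eps)(y_k) \geq c_\eps \exp(-(|y_k|+R)^2/(2\eps))$. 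Combining these, the integrand of $I[\mu^n](X)$ is dominated by a constant times
\[
	\prod_{k=1}^N \exp\!\left( \frac{1}{2\eps} \Big( -|y_k-x_k|^2 + \tfrac{N-1}{N}(|y_k|+R)^2 \Big) \right);
\]
expanding the square shows that the quadratic coefficient in $|y_k|$ equals $-1/(2N\eps) < 0$, so the bound is integrable in $y_k$ for each fixed $x_k$. Dominated convergence then yields $I[\mu^n](X) \to I[\mu](X)$ everywhere, and multiplying by the a.e.\ convergent marginal factors proves the first assertion.

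For the gradient statement I would start from formula \eqref{nabla-j-definition} and treat its two terms separately. The first term, after the factorization above, becomes $\nabla \proj{\mu}{j}(x_j) \prod_{k\neq j} \proj{\mu}{k}(x_k) \cdot I[\mu](X)$, which removes the apparent singularity at the zero set of $\proj{\mu}{j}$; it converges a.e.\ via the additional pointwise hypothesis on $\nabla \proj{\mu^n}{j}$ together with what was just proved for $I$. For the second term, the Gaussian identity $\nabla \eta^\eps(z) / \eta^\eps(z) = -z/\eps$ recasts the integral as $-\eps^{-1} \prod_k \proj{\mu}{k}(x_k) \int (y_j - x_j) \prod_k \frac{\eta^\eps(y_k-x_k)}{(\proj{\mu}{k} * \eta^\eps)(y_k)} \Lambda^\eps[\mu](Y) \dY$, i.e.\ the same integral $I[\mu](X)$ weighted by the polynomial factor $|y_j - x_j|$. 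Such a factor is absorbed by the Gaussian tail of the previous dominating function, so dominated convergence applies once more and gives $\nabla \Theta^\eps[\mu^n](X) \to \nabla \Theta^\eps[\mu](X)$ at a.e.\ $X$, completing the proof.
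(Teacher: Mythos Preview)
Your proof is correct and follows essentially the same route as the paper: pointwise convergence of the integrand via weak convergence testing, the uniform-in-$n$ domination built from \autoref{upper-lower-lemma} (i)--(ii) after choosing a common $R$ by tightness, and the same Gaussian-tail check that the leading $|y_k|^2$ coefficient is $-1/(2N\eps)$. Your explicit factorization $\Theta^\eps[\mu](X) = \prod_k \proj{\mu}{k}(x_k)\cdot I[\mu](X)$ is a mild cosmetic improvement over the paper's version --- it makes the first gradient term $\nabla \proj{\mu}{j}\prod_{k\neq j}\proj{\mu}{k}\cdot I$ manifestly free of the quotient $\nabla\proj{\mu}{j}/\proj{\mu}{j}$ --- and the sign in your rewritten second term should be $+\eps^{-1}$ rather than $-\eps^{-1}$, but this is inconsequential for the argument.
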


\begin{proof} Let $P^\eps[\mu](X,Y)$ be the integral kernel defining $\Theta^\eps[\mu]$, namely
	\[  P^\eps[\mu](X,Y) = \prod_{j = 1}^N \frac{\eta^\eps(y_j-x_j)}{(\proj{\mu}{j} * \eta^\eps)(y_j)} \proj{\mu}{j}(x_j) \Lambda^\eps[\mu](Y). \]	
	
	We claim that $P^\eps[\mu^n]$ converges pointwise a.e. to $P^\eps[\mu]$. For every $Y \in \Rdn$ and every $j \in \gra{1, \dotsc, N}$ we have
	\begin{ieee*}{rCl}
		\abs{(\proj{\mu^n}{j} * \eta^\eps)(y_j) - (\proj{\mu}{j} * \eta^\eps)(y_j)} & \leq & \int \eta^\eps(y_j - x_j) \abs{\proj{\mu^n}{j}(x_j) - \proj{\mu}{j}(x_j)} \de x_j	\\
		& \leq & \frac{1}{(2\pi\eps)^\frac{d}{2}} \norm[1]{\proj{\mu^n}{j} - \proj{\mu}{j}} \to 0
	\end{ieee*}	
	by \autoref{p-continuous-maps}. Moreover 
	\begin{ieee*}{l}
		\abs{\Lambda^\eps[\mu^n](Y) - \Lambda^\eps[\mu](Y)} \\
		\leq \abs{\int \prod_{j = 1}^N \eta^\eps(y_j-x_j) \de \mu^n(X) - \int \prod_{j = 1}^N \eta^\eps(y_j-x_j) \de \mu(X)}
	\end{ieee*}
	goes to zero for every $Y$ because $\prod \eta^\eps(y_j-x_j)$ is a fixed countinuous bounded function, and $\mu^n \wconv \mu$.  Finally fix $X \in \Rdn$ in the set of full measure such that $\proj{\mu^n}{j}(x_j) \to \proj{\mu}{j}(x_j)$ for every $j = 1, \dotsc, N$.
	
	We need only to find a domination for $P^\eps[\mu^n]$. For every $j = 1, \dotsc, N$ let $R_j$ given by \autoref{tightness-lemma} for $\gamma = \frac{1}{2}$, and let $R = \max_{j} R_j$. Using \autoref{upper-lower-lemma} (i) and (ii) one has
	\begin{align*}
	P^\eps[\mu^n](X,Y) &\leq (2\pi\eps)^{-\frac{(N-1)d}{2N}} \prod_{j = 1}^N \frac{\eta^\eps(y_j-x_j) \proj{\mu^n}{j}(x_j)}{(\proj{\mu^n}{j} * \eta^\eps)(y_j)^{N-1/N}}\\
	&\leq 2^N \prod_{j = 1}^N \eta^\eps(y_j-x_j) \proj{\mu^n}{j}(x_j) \exp \left( \frac{(N-1)(\abs{y_j} + R)^2}{2N\eps} \right) \\
	&= 2^N e^{\frac{(N-1)R^2}{2\eps}} \prod_{j = 1}^N \proj{\mu^n}{j}(x_j) e^{\frac{-|x_j|^2}{2\eps}} e^{\frac{-|y_j|^2 + (2N|x_j| + 2(N-1)R)|y_j|}{2N\eps}}.
	\end{align*}
	
	When $X$ and $\eps$ are fixed, the latter is an integrable function of the variable $Y = (y_1, \dotsc, y_N)$, and we conclude the first part of the proof thanks to \autoref{generalized-lebesgue}.
	
	Recalling \eqref{nabla-j-definition} we have
	\[ \nabla_{x_j} \Theta^\eps[\mu^n](X) = \frac{\nabla \proj{\mu^n}{j}(x_j)}{\proj{\mu^n}{j}(x_j)} \Theta^\eps[\mu^n](X) - \int \frac{\nabla \eta^\eps(y_j-x_j)}{\eta^\eps(y_j-x_j)} P^\eps[\mu^n](X,Y) \dY \]
	and
	\[ \nabla_{x_j} \Theta^\eps[\mu](X) = \frac{\nabla \proj{\mu}{j}(x_j)}{\proj{\mu}{j}(x_j)} \Theta^\eps[\mu](X) - \int \frac{\nabla \eta^\eps(y_j-x_j)}{\eta^\eps(y_j-x_j)} P^\eps[\mu](X,Y) \dY \]
	
	Using the first part and the additional assumption on the pointwise convergence of the gradients, we immediately see that
	\[
	\frac{\nabla \proj{\mu^n}{j}(x_j)}{\proj{\mu^n}{j}(x_j)} \Theta^\eps[\mu^n](X) \longrightarrow \frac{\nabla \proj{\mu^n}{j}(x_j)}{\proj{\mu^n}{j}(x_j)} \Theta^\eps[\mu^n](X),
	\]
	converges pointwise a.e. on $\R^d \times \R^d$.
	
	As for the second term, like before the integrands converge pointwise a.e., and the domination is obtained using \autoref{upper-lower-lemma} (i) and (ii).
\end{proof}

From \autoref{pointwise-prop}, using some dominations already seen in \autoref{section-regularity}, we obtain the following corollary.

\begin{corollary} \label{H1-corollary} Suppose that $\mu^n \wconv \mu$, with $\proj{\mu^n}{j} \to \proj{\mu}{j}$ in $\prob^{1,p}(\R^d)$ and $\proj{\mu^n}{j} \to \proj{\mu}{j}$ pointwise a.e. on $\R^d$ for every $j = 1, \dotsc, N$. Then $(\Theta^\eps[\mu^n])^\frac{1}{p} \to (\Theta^\eps[\mu])^\frac{1}{p}$ in $L^p\left( \Rdn \right)$.
	
	Assume in addiction that $\nabla \proj{\mu^n}{j} \to \nabla \proj{\mu}{j}$ pointwise a.e. on $\R^d$. Then $(\Theta^\eps[\mu^n])^\frac{1}{p} \to (\Theta^\eps[\mu])^\frac{1}{p}$ in $W^{1,p}\left( \Rdn \right)$.
\end{corollary}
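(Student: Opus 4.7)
The plan is to deduce both claims from the pointwise almost everywhere convergence provided by \autoref{pointwise-prop}, upgrading it to the appropriate norm convergence via a Scheff\'e-type argument for the first statement and by invoking \autoref{p-continuous-maps} for the second. For the $L^p$ convergence, \autoref{pointwise-prop} gives $\Theta^\eps[\mu^n] \to \Theta^\eps[\mu]$ pointwise a.e.\ on $\Rdn$; since both sides are probability densities, an application of \autoref{generalized-lebesgue} with $f_n = \abs{\Theta^\eps[\mu^n] - \Theta^\eps[\mu]}$ and $g_n = \Theta^\eps[\mu^n] + \Theta^\eps[\mu]$ (which integrate to the constant $2$) yields $\Theta^\eps[\mu^n] \to \Theta^\eps[\mu]$ in $L^1(\Rdn)$. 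Applying \eqref{gamma-inequality} with $\gamma = 1/p$ pointwise then gives
\[ \int \abs{(\Theta^\eps[\mu^n])^{1/p} - (\Theta^\eps[\mu])^{1/p}}^p \dX \leq \int \abs{\Theta^\eps[\mu^n] - \Theta^\eps[\mu]} \dX \xrightarrow{n\to\infty} 0. \]

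For the $W^{1,p}$ convergence I would apply the second half of \autoref{p-continuous-maps} to $u_n = \Theta^\eps[\mu^n]$ and $u = \Theta^\eps[\mu]$. The required $W^{1,1}$-convergence splits into the $L^1$ part just obtained and the $L^1$ convergence of the gradients. The latter follows by the same Scheff\'e argument: pointwise a.e.\ convergence comes from the second half of \autoref{pointwise-prop}, while the $L^1$-norms $\int \abs{\nabla_{x_j} \Theta^\eps[\mu^n]} \dX$ are controlled via \eqref{nabla-j-definition} by $\norm[1]{\nabla \proj{\mu^n}{j}} + \int \abs{\nabla \eta^\eps(z)} \de z$, which converges by \autoref{L1-convergence-marginals}.

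The core step is the construction of dominations $h_n, h$ satisfying the hypotheses of \autoref{p-continuous-maps}. Summing the inequality of \autoref{nabla-domination} over $j$ and recalling \eqref{abs-gradient-convention}, I would define
\[ h_n(X) = 2^{p-1} \sum_{j=1}^N \left( \frac{\abs{\nabla \proj{\mu^n}{j}(x_j)}^p}{\proj{\mu^n}{j}(x_j)^p} \Theta^\eps[\mu^n](X) + \int \frac{\abs{\nabla \eta^\eps(y_j-x_j)}^p}{\eta^\eps(y_j-x_j)^p} P^\eps[\mu^n](X,Y) \dY \right) \]
and $h(X)$ analogously. The explicit computation carried out inside the proof of \autoref{thm-regularity} gives $\int h_n \dX = 2^{p-1} \sum_j \bigl( p^p \energy{1,p}(\proj{\mu^n}{j}) + C(d,\eps,p) \bigr)$, hence $\int h_n \to \int h$ because $d^{1,p}$-convergence of the marginals forces the $W^{1,p}$-energies to converge (by definition of the distance). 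The pointwise a.e.\ convergence $h_n \to h$ (and in particular the subsequence hypothesis of \autoref{p-continuous-maps}) follows from \autoref{pointwise-prop} for the first summand, and from one more application of \autoref{generalized-lebesgue} inside the $Y$-integral for the second, using the Gaussian upper and lower bounds of \autoref{upper-lower-lemma} in the same way as in the second half of the proof of \autoref{pointwise-prop}.

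The main obstacle is precisely this last pointwise convergence of the inner integral in the second summand of $h_n$: one must dominate the $Y$-integrand uniformly in $n$ by a fixed $L^1$ function, which is achieved by invoking \autoref{upper-lower-lemma} with a radius $R$ furnished by the uniform tightness from \autoref{tightness-lemma} and combining it with the explicit Gaussian form of $\eta^\eps$. Once the three conditions of \autoref{p-continuous-maps} are verified, the desired $W^{1,p}$ convergence $(\Theta^\eps[\mu^n])^{1/p} \to (\Theta^\eps[\mu])^{1/p}$ follows immediately.
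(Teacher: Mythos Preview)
Your proposal is correct; the $L^p$ part is identical to the paper's argument. For the $W^{1,p}$ part you take a slightly different route: you invoke \autoref{p-continuous-maps} as a black box, which forces you to first establish $W^{1,1}$-convergence of $\Theta^\eps[\mu^n]$ and then verify the domination hypotheses $h_n, h$. The paper instead bypasses \autoref{p-continuous-maps} entirely and applies \autoref{generalized-lebesgue} directly to $\bigl|\Theta^\eps[\mu^n]^{(1-p)/p}\nabla_{x_j}\Theta^\eps[\mu^n] - \Theta^\eps[\mu]^{(1-p)/p}\nabla_{x_j}\Theta^\eps[\mu]\bigr|^p$, using exactly the same dominations $g_n, g$ from \autoref{nabla-domination} that you call $h_n, h$. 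In effect the paper inlines the proof of \autoref{p-continuous-maps}, which lets it skip the $W^{1,1}$ step; your packaging is more modular but costs an extra paragraph.

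Two small remarks. First, your appeal to \autoref{L1-convergence-marginals} for the convergence of $\norm[1]{\nabla\proj{\mu^n}{j}}$ is a misreference: that lemma assumes $\mu^n \to \mu$ in $\prob^{1,p}\bigl(\Rdn\bigr)$, which you do not have. The claim is still true, but it follows from the first half of \autoref{p-continuous-maps} applied to the marginals. Second, your ``Scheff\'e argument'' for the $L^1$-convergence of the gradients needs to be stated as a genuine application of \autoref{generalized-lebesgue} (the gradients are signed, so Scheff\'e alone does not apply): take $g_n$ to be the sum of the triangle-inequality bounds for $\abs{\nabla_{x_j}\Theta^\eps[\mu^n]}$ and $\abs{\nabla_{x_j}\Theta^\eps[\mu]}$, and note that both the pointwise convergence $g_n\to g$ and the integral convergence require the same $Y$-integral dominated-convergence step you already flagged for $h_n$. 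Once this is made explicit the argument goes through.
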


\begin{proof}
	By \autoref{pointwise-prop} we already have pointwise  a.e. convergence of the functions. Using \eqref{gamma-inequality} we get
	\begin{ieee*}{rCl}
		\abs{(\Theta^\eps[\mu^n](X))^\frac{1}{p} - (\Theta^\eps[\mu](X))^\frac{1}{p}}^p & \leq & \abs{\Theta^\eps[\mu^n](X) - \Theta^\eps[\mu](X)} \\
		& \leq & \Theta^\eps[\mu^n](X) + \Theta^\eps[\mu](X).
	\end{ieee*}
	
	The latter converges pointwise to $2 \Theta^\eps[\mu](X)$, and
	\[ \int \Theta^\eps[\mu^n](X) \dX + \int \Theta^\eps[\mu](X) \dX = 2, \]
	which allows to conclude the first part of the proof thanks to \autoref{generalized-lebesgue}.
	
	Using the expression given by \autoref{thm-regularity} and \eqref{gamma-inequality} we have
	\begin{ieee*}{l}
		\int \abs{\nabla_{x_j} (\Theta^\eps[\mu^n])^\frac{1}{p}(X) - \nabla_{x_j} (\Theta^\eps[\mu])^\frac{1}{p}(X)}^p \dX \\
		\leq \frac{1}{p^p} \int \abs{\Theta^\eps[\mu^n](X)^\frac{1-p}{p} \nabla_{x_j} \Theta^\eps[\mu^n](X) - \Theta^\eps[\mu](X)^\frac{1-p}{p} \nabla_{x_j} \Theta^\eps[\mu](X)}^p \dX.
	\end{ieee*}
	
	By \autoref{pointwise-prop} we have pointwise convergence to zero of the integrand. In order to control the gradients we recall \autoref{nabla-domination} and get
	\begin{ieee*}{Cl}
		\IEEEeqnarraymulticol{2}{l}{\abs{\Theta^\eps[\mu^n](X)^\frac{1-p}{p} \nabla_{x_j} \Theta^\eps[\mu^n](X) - \Theta^\eps[\mu](X)^\frac{1-p}{p} \nabla_{x_j} \Theta^\eps[\mu](X)}^p} \\
		\leq & 2^{p-1} \left( \Theta^\eps[\mu^n](X)^{1-p} \abs{\nabla_{x_j} \Theta^\eps[\mu^n](X)}^p + \Theta^\eps[\mu](X)^{1-p} \abs{\nabla_{x_j} \Theta^\eps[\mu](X)}^p \right) \\
		\leq & 4^{p-1} \left(\frac{\abs{\nabla \proj{\mu^n}{j}(x_j)}^p}{\proj{\mu^n}{j}(x_j)^p} \Theta^\eps[\mu](X) + \int \frac{\abs{\nabla \eta^\eps(y_j - x_j)}^p}{\eta^\eps(y_j-x_j)^p} P^\eps[\mu^n](X, Y) \dY \right) \\
		& {}+ 4^{p-1} \left(\frac{\abs{\nabla \proj{\mu}{j}(x_j)}^p}{\proj{\mu}{j}(x_j)^p} \Theta^\eps[\mu](X) + \int \frac{\abs{\nabla \eta^\eps(y_j - x_j)}^p}{\eta^\eps(y_j-x_j)^p} P^\eps[\mu](X, Y) \dY \right) \\
		=\vcentcolon & 4^{p-1} g_n(X) + 4^{p-1} g(X)
	\end{ieee*}
	
	By hypothesis we have that that $g_n \to g$ pointwise a.e. as in the proof of \autoref{pointwise-prop}. Moreover, as already seen above,
	\[ \int g_n(X) = p^p \int \abs{\nabla \left( \proj{\mu^n}{j} \right)^\frac{1}{p}(x_j) }^p \de x_j + \int \frac{\abs{\nabla \eta^\eps(z)}^p}{\eta^\eps(z)^{p-1}} \de z \]
	and 
	\[ \int g(X) = p^p \int \abs{\nabla \left( \proj{\mu}{j} \right)^\frac{1}{p}(x_j) }^p \de x_j + \int \frac{\abs{\nabla \eta^\eps(z)}^p}{\eta^\eps(z)^{p-1}} \de z, \]
	which allows to conclude thanks to \autoref{generalized-lebesgue}.
\end{proof}

As a final result we obtain \autoref{mu-continuity}, which we report here for the sake of the reader.

\mucontinuity*

\begin{proof}
	By contradiction, suppose that there exist $\delta > 0$ and a subsequence of $(\mu^n)$ (denoted again $(\mu^n)$ for simplicity) such that
	\begin{equation} \label{not-converging}
	d^{1,p}\left( \Theta^\eps[\mu^n], \Theta^\eps[\mu] \right) \geq \delta.
	\end{equation}
	
	Extract a further subsequence $(\mu^{n_k})_k$ such that $\proj{\mu^{n_k}}{j} \to \proj{\mu}{j}$ in $\prob^{1,p}(\R^d)$, and in addition  $\proj{\mu^{n_k}}{j} \to \proj{\mu}{j}$ and $\nabla \proj{\mu^{n_k}}{j} \to \nabla \proj{\mu}{j}$ pointwise a.e. on $\R^d$ for every $j = 1, \dotsc, N$. Due to \autoref{H1-corollary} we should have $(\Theta^\eps[\mu^{n_k}])^\frac{1}{p} \to (\Theta^\eps[\mu])^\frac{1}{p}$ in $W^{1,p}\left( \Rdn \right)$, contradicting \eqref{not-converging}.
\end{proof}


\bibliographystyle{plain}

\nocite{ambrosio2013user, pass2015multi, bindini2017optimal, cotar2013density, cotar2018smoothing}
\bibliography{../biblio}

\end{document}